\newtheorem*{KN}{Crazy Knight's Tour Problem}
\def\Z{\mathbb{Z}}
\newcommand{\probname}{Crazy Knight's Tour Problem}
\def\G{\Gamma}
\def\H{\mathrm{H}}
\def\E{\mathcal{E}}
\def\R{\mathcal{R}}
\def\C{\mathcal{C}}
\newtheorem{thm}{Theorem}[section]
\newtheorem{lem}[thm]{Lemma}
\newtheorem{cor}[thm]{Corollary}
\newtheorem{prop}[thm]{Proposition}
\newtheorem{ex}[thm]{Example}
\newtheorem{rem}[thm]{Remark}
\theoremstyle{definition}
\newtheorem{defin}[thm]{Definition}
\numberwithin{equation}{section}
\def\Z{\mathbb{Z}}
\title[Non-isomorphic simple $k$-gonal biembeddings]{On the number of non-isomorphic (simple) $k$-gonal biembeddings of complete multipartite graphs}
\author{Simone Costa}
\address{DICATAM, Universit\`a degli Studi di Brescia, Via
Branze 43, 25123 Brescia, Italy}
\email{simone.costa@unibs.it}
\author{Anita Pasotti}
\address{DICATAM, Universit\`a degli Studi di Brescia, Via
Branze 43, 25123 Brescia, Italy}
\email{anita.pasotti@unibs.it}
\keywords{Topological embedding, non-isomorphic embedding, Heffter array.}
\subjclass[2010]{05C10, 05C15, 05B20, 54C25}
\begin{document}
\maketitle
\begin{abstract}
This article aims to provide exponential lower bounds on the number of non-isomorphic $k$-gonal biembeddings of the complete multipartite graph into orientable surfaces.

For this purpose, we use the concept, introduced by Archdeacon in 2015, of Heffer array and its relations with graph embeddings.
In particular we show that, under certain hypotheses, from a single Heffter array, we can obtain an exponential number of distinct graph embeddings.
Exploiting this idea starting from the arrays constructed by Cavenagh, Donovan and Yaz\i c\i \ in 2020, we obtain that, for infinitely many values of $k$ and $v$, there are at least $k^{\frac{k}{2}+o(k)} \cdot 2^{v\cdot \frac{H(1/4)}{(2k)^2}+o(v)}$ non-isomorphic $k$-gonal biembeddings of $K_v$, where $H(\cdot)$ is the binary entropy.
Moreover about the embeddings of $K_{\frac{v}{t}\times t}$, for $t\in\{1,2,k\}$, we provide a construction of $2^{v\cdot \frac{H(1/4)}{2k(k-1)}+o(v,k)}$ non-isomorphic $k$-gonal biembeddings whenever $k$ is odd and $v$ belongs to a wide infinite family of values.
\end{abstract}

\section{Introduction}
The purpose of this paper is to provide exponential lower bounds on the number of non-isomorphic embeddings of
the complete multipartite graph  into orientable surfaces that induce faces of a given length $k$ (i.e. we investigate the so-called \emph{$k$-gonal embeddings}).
We first recall some basic definitions, see \cite{Moh}.
\begin{defin}
Given a graph $\G$ and a surface $\Sigma$, an \emph{embedding} of $\G$ in $\Sigma$ is a continuous injective mapping $\psi: \G \rightarrow \Sigma$, where $\G$ is viewed with the usual topology as $1$-dimensional simplicial complex.
\end{defin}
The connected components of $\Sigma \setminus \psi(\G)$ are said $\psi$-\emph{faces}. Also, with abuse of notation, we say that a circuit $F$ of $\G$ is a face (induced by the embedding $\psi$) if $\psi(F)$ is the boundary of a $\psi$-face. Then, if each $\psi$-face is homeomorphic to an open disc, the embedding $\psi$ is called \emph{cellular}.
If the boundary of a face is homeomorphic to a circumference, such a face is said \emph{simple} and if all the faces are simple we say that the embedding is simple.
 If moreover, the embedding is face $2$-colorable, we say that it is a \emph{biembedding}.
In this context, we say that two embeddings $\psi: \Gamma \rightarrow \Sigma$ and $\psi': \Gamma' \rightarrow \Sigma'$ are \emph{isomorphic} if and only if there is a graph isomorphism $\sigma: \Gamma\rightarrow \Gamma'$ such that $\sigma(F)$ is a $\psi'$-face if and only if $F$ is a $\psi$-face.

The existence problem of cellular embeddings of a graph $\Gamma$ into (orientable) surfaces has been widely studied in the case of triangular embeddings, which are the ones whose faces are triangular. This kind of embeddings has been investigated, at first, because their construction was a major step in proving the Map Color Theorem \cite{R}. Among the papers related to this existence problem, we recall \cite{Bonnington, GG08, GGS, GK10A, GK10B, Korzhik21, LNW} where the natural question of the rate growth of the number of non-isomorphic triangular embeddings of complete graphs has been considered too.
Moreover, due to the Euler formula, if there exists a triangular embedding $\psi$ from $\Gamma$ to some surface $\Sigma$, $\psi$ minimizes the genus of $\Sigma$. For this reason, such kinds of embeddings are called \emph{genus embeddings}.
Two naturally related questions are the investigation of the rate of the number of non-isomorphic genus embeddings (see \cite{Korzhik10, Korzhik2}) and that of the $k$-gonal embeddings (see \cite{GGSHamiltonian, Korzhik12,Korzhik}).

In this paper, we consider the latter question and we study the rate growth of the number of non-isomorphic $k$-gonal embeddings of the complete multipartite graph with $m$ parts of size $t$, denoted by $K_{m\times t}$.
Here, we  provide exponential lower bounds on this number for several infinite classes of parameters $k,m$ and $t$.
Furthermore, our embeddings  also realize additional properties: the faces they induce are (in several cases) simple
and it is possible to color them within two colors, i.e. these embeddings are $2$-face colorable.
Finally, in the cases where $k$ is $3$, we find new classes of genus embeddings.

The approach we use in this article is purely combinatorial and requires the notion of combinatorial embedding, see \cite{GG, JS}.
Here, we denote by $D(\Gamma)$ the set of all the oriented edges of the graph $\Gamma$ and,
given a vertex $x$ of $\G$,
 by $N(\Gamma,x)$ the neighborhood of $x$ in $\Gamma$.
\begin{defin}
Let $\Gamma$ be a connected multigraph. A \emph{combinatorial embedding} of $\Gamma$ (into an orientable surface) is a pair $\Pi=(\Gamma,\rho)$ where $\rho: D(\Gamma)\rightarrow D(\Gamma)$ satisfies the following properties:
\begin{itemize}
\item for any $y\in N(\Gamma,x)$, there exists $y'\in N(\Gamma,x)$ such that $\rho(x,y)=(x,y')$;
\item we define $\rho_x$ as the permutation of $N(\Gamma,x)$ such that, given $y\in N(\Gamma,x)$, $\rho(x,y)=(x,\rho_x(y))$. Then the permutation $\rho_x$ is a cycle of order $|N(\Gamma,x)|$.
\end{itemize}
\end{defin}
It is well known that a combinatorial embedding of $\Gamma$ is equivalent to a cellular embedding of $\Gamma$ in an orientable surface, see \cite{A,GT, MT}. This observation leads us to study this kind of embedding isomorphisms purely combinatorially. From the combinatorial point of view, the faces are determined using the face-trace algorithm, see \cite{A}.
It is easy to see that the faces are circuits (that is sequences of consecutive vertices and edges, denoted by $v_1,v_2,\ldots,v_k$), the \emph{length} of a circuit is the number of its edges. If the faces are simple then $v_i\neq v_j$ for any $i\neq j$, so the circuits
are indeed cycles with $k$ distinct vertices and $k$ edges.
In this context, it is possible to rephrase the definition of embedding isomorphism as done by
Korzhik and Voss in \cite{Korzhik}, see page 61.
\begin{defin}\label{DefEmbeddings}
Let $\Pi:= (\Gamma,\rho)$ and $\Pi':= (\Gamma',\rho')$ be two combinatorial embeddings of, respectively, $\Gamma$ and $\Gamma'$. We say that $\Pi$ is \emph{isomorphic} to $\Pi'$ if there exists a graph isomorphism $\sigma: \Gamma\rightarrow \Gamma'$ such that, for any $(x,y)\in D(\Gamma)$, we have either
\begin{equation}\label{eq11}
\sigma\circ \rho(x,y)=\rho'\circ \sigma(x,y)
\end{equation}
or
\begin{equation}\label{eq12}
\sigma\circ \rho(x,y)=(\rho')^{-1}\circ \sigma(x,y).
\end{equation}
We also say, with abuse of notation, that $\sigma$ is an \emph{embedding isomorphism} between $\Pi$ and $\Pi'$.
Moreover, if equation (\ref{eq11}) holds, $\sigma$ is said to be an \emph{orientation preserving isomorphism} while,
if (\ref{eq12}) holds, $\sigma$ is said to be an \emph{orientation reversing isomorphism}.
\end{defin}

This combinatorial approach has been developed in the literature into two kinds of directions. The first one is the use of recursive constructions and has been applied to construct triangular embeddings of complete graphs from triangular embeddings of complete graphs of a lesser order. Within this method, it was first shown that there are at least $2^{av^2-o(v^2)}$ non-isomorphic face $2$-colorable triangular embeddings of the complete graph $K_v$ for several congruence classes modulo $36$, $60$ and $84$ (see \cite{Bonnington, GGS}) and then that, for an infinite (but rather sparse) family of values of $v$, there are at least $v^{bv^2-o(v^2)}$ non-isomorphic face $2$-colorable triangular embeddings of $K_v$ (see \cite{GG08,GK10A,GK10B}). Another consequence of these kinds of recursive constructions is the existence of $2^{cv^2-o(v^2)}$ non-isomorphic Hamiltonian embeddings of $K_v$ for infinitely many values of $v$ (see \cite{GGSHamiltonian}).

The second approach uses the current graph technique. Within this method, it was provided the first exponential lower bound (of type $2^{dv}$) on the number of non-isomorphic face $2$-colorable triangular embeddings of $K_v$ for infinitely many values of $v$. Then, similar results have been also given in the cases of genus and quadrangular embeddings (see \cite{Korzhik12,Korzhik21,Korzhik,Korzhik2}).
The approach used in this paper belongs to this second family.
The main tool we will use is the concept of Heffter array, introduced by Archdeacon in \cite{A} to provide constructions of current graphs.
Section 2 of this paper will be dedicated to introducing this kind of array, to reviewing the literature on this topic
and to further investigating the connection with biembeddings.
Then, in Section 3, we will deal with the following problem: given a family of embeddings each of which admits $\mathbb{Z}_v$ as a regular automorphism group (i.e. embeddings that are $\mathbb{Z}_v$-regular), how many of its elements can be isomorphic? Proposition \ref{upperboundFamily} will provide an upper bound on this number.
In the last two sections, we will consider some of the known constructions of Heffter arrays and we will show that, under certain hypotheses, from each of such arrays we can obtain a family of $\mathbb{Z}_v$-regular embeddings that is exponentially big. 
These families, together with Proposition \ref{upperboundFamily}, will allow us to achieve the existence of an exponential number of non-isomorphic $k$-gonal biembeddings of $K_v$ and $K_{\frac{v}{t}\times t}$ in several situations.
In particular, in Section 4 we will obtain that, when $k$ is congruent to $3$ modulo $4$ and $v$ belongs to an infinite family of values, there are $k^{\frac{k}{2}+o(k)} \cdot 2^{g(k)v+o(v)}$ non-isomorphic $k$-gonal biembeddings of $K_v$ where $g(k)$ is a rational function of $k$. Finally, in Section 5, we will consider the embeddings of $K_{\frac{v}{t}\times t}$. In this case, for $t\in\{1,2,k\}$, we will provide a construction of $2^{h(k)v+o(v,k)}$ non-isomorphic $k$-gonal biembeddings whenever $k$ is odd, $v$ belongs to a wide infinite family of values and where $h(k)$ is a rational function of $k$.

\section{Heffter arrays and biembeddings}
In this section we introduce the classical concept of Heffter array and its generalizations,
showing how these notions are useful tools for getting biembeddings of the complete
multipartite graph into an orientable surface.

An $m\times n$ partially filled (p.f., for short) array on a given set $\Omega$ is an $m\times n$ matrix with elements in $\Omega$
in which some cells can be empty. Archdeacon \cite{A} introduced a class of p.f. arrays, called \emph{Heffter arrays}, and showed how it is related to several other mathematical concepts such as difference families, graph decompositions, current graphs and biembeddings.
These arrays have been then generalized by Costa and al. in \cite{RelH}  as follows.
\begin{defin}\label{def:Heffter}
  Let $v=2nk+t$ be a positive integer, where $t$ divides $2nk$, and let $J$ be the subgroup of $\Z_v$ of order $t$.
  A \emph{Heffter array over $\Z_v$ relative to $J$}, denoted by $\H_t(m,n;h,k)$, is an $m\times n$ p.f. array with elements in
  $\Z_v$ such that:
  \begin{itemize}
  \item [(1)] each row contains $h$ filled cells and each column contains $k$ filled cells;
  \item [(2)] for every $x\in \Z_v \setminus J$, either $x$ or $-x$ appears in the array;
  \item [(3)] the elements in every row and in every column sum to $0$ (in $\Z_{v}$).
  \end{itemize}
\end{defin}

If $t=1$, namely, if $J$ is the trivial subgroup of $\Z_{2nk+1}$, we find the classical Heffter arrays defined by Archdeacon,
which are simply denoted by  $\H(m,n;h,k)$.
It is immediate that if there exists a $\H_t(m,n;h,k)$ then $mh=nk$, $3\leq h \leq n$ and $3 \leq k \leq m$. Also, $m=n$ implies $k=h$
and a $\H_t(n,n;k,k)$ is simply denoted by $\H_t(n;k)$. The most important result about the existence problem for Heffter arrays is the following, see \cite{ADDY, CDDY, DW}.
\begin{thm}
A $\H(n;k)$ exists for every $n \geq k \geq 3$.
\end{thm}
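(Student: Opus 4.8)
The plan is to first fix the combinatorial \emph{shape} of the array and only then determine the entries. For the shape I would take the filled cells of the $n\times n$ array to be the $k$ cyclically consecutive diagonals, i.e.\ the cells $(i,j)$ with $(j-i)\bmod n\in\{0,1,\dots,k-1\}$. This automatically produces exactly $k$ filled cells in each row and each column, so condition (1) of Definition \ref{def:Heffter} holds for free, and, crucially, it makes the column structure a cyclic relabelling of the row structure. What is left is to distribute the $2nk$ values $\pm1,\pm2,\dots,\pm nk$, placing one out of each pair $\{x,-x\}$ in one cell, so that (2) holds and every row (hence every column) sums to $0$ in $\Z_{2nk+1}$.

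Next I would isolate the arithmetic obstruction. Summing all the row sums, the total of the chosen representatives must be $\equiv 0\pmod{2nk+1}$; if one insists on the stronger ``integer'' condition that each row sum vanish in $\Z$, then $\sum_{i=1}^{nk} i=\tfrac{nk(nk+1)}{2}$ must be splittable into two equal halves by the sign choice, which is possible only when $nk\equiv 0$ or $3\pmod 4$. So I would expect the argument to split into the cases $nk\equiv 0,3\pmod 4$, treated by genuinely integer arrays, and $nk\equiv 1,2\pmod 4$, where one is forced to exploit the $\Z_{2nk+1}$ relaxation and let a few rows and columns sum to $\pm(2nk+1)$ rather than $0$.

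For the direct constructions I would fill diagonal $d$ (for $d=0,\dots,k-1$) with a value depending affinely on the row index $i$, choosing the sign pattern per diagonal so that (a) inside each row the $k$ chosen values telescope or pair off to $0$ (or to $\pm(2nk+1)$ in the exceptional rows), and (b) the built-in cyclic symmetry transports this cancellation to the columns. In effect one searches for a zero-sum ``building block'' of $k$ symbols which, translated over $i=0,\dots,n-1$, uses each of $\pm1,\dots,\pm nk$ exactly once — a Skolem/Langford-type sequencing problem whose resolution is the familiar case analysis modulo $4$ (or $8$). The parameter pairs $(n,k)$ not covered by one uniform pattern, namely small $n$ or awkward residues, I would mop up with a recursive step, assembling an $\H(n_1+n_2;k)$ (or $\H(n_1n_2;k)$) from $\H(n_1;k)$ and $\H(n_2;k)$ by juxtaposing blocks and rescaling the value set, bottoming out at a finite list of sporadic arrays verified by hand or by computer.

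The hard part, as always with Heffter-type objects, is that the row conditions, the column conditions, and the ``each value exactly once'' requirement must all be met \emph{simultaneously}. The cyclic-diagonal shape decouples the column conditions from the row conditions only if the filling is itself shift-invariant, but a truly shift-invariant filling reuses symbols; one is therefore pushed into a quasi-shift-invariant pattern whose bookkeeping — precisely which residue classes of $i$ receive which sign on which diagonal, and how the defect $\pm(2nk+1)$ is distributed when $nk\equiv 1,2\pmod 4$ — is where essentially all of the difficulty and all of the casework reside.
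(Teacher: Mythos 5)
You should first be aware that the paper does not prove this theorem at all: it is quoted as a known result, with the proof distributed across the three cited papers of Archdeacon--Dinitz--Donovan--Yaz\i c\i, Cavenagh--Dinitz--Donovan--Yaz\i c\i\ and Dinitz--Wanless. Measured against those sources, your outline does capture the correct architecture: a (near-)diagonal skeleton, the parity obstruction forcing a split between \emph{integer} Heffter arrays (possible exactly when $nk\equiv 0,3\pmod 4$) and genuinely non-integer ones over $\Z_{2nk+1}$ when $nk\equiv 1,2\pmod 4$, and the reduction of each row to a zero-sum block of Skolem/Langford type, with recursion and sporadic cases to finish. That is indeed how the existence proof is organised in the literature.

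As a proof, however, your text has an essentially total gap: every constructive step is announced rather than performed. You never exhibit a filling, never specify the affine functions on the diagonals, never verify that a proposed pattern uses each pair $\{x,-x\}$ exactly once, and never carry out the case analysis modulo $4$ (in practice modulo $8$, with separate treatment of many residues of $n$ and $k$) that constitutes the entire content of the theorem. Two difficulties you yourself flag are precisely the ones that cannot be waved away. First, the column sums do \emph{not} follow from the row sums via the cyclic symmetry of the skeleton, because, as you note, the filling cannot be shift-invariant; in the actual constructions the column condition is a second, independent system of constraints checked diagonal by diagonal. Second, the existence of a suitable telescoping zero-sum block for \emph{every} $n\ge k\ge 3$ is exactly the hard combinatorial fact: naive patterns fail, and the published proofs require many distinct block designs plus hand- or computer-verified sporadic arrays. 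So what you have is a correct roadmap of the cited papers, not a proof; turning it into one would require reproducing their constructions in substance.
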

For other existence results see \cite{MP3} and the references therein.

In \cite{CPEJC} we introduced the further generalization of a \emph{$\lambda$-fold Heffter array $A$ over $\Z_v$ relative to $J$},
denoted by $^\lambda \H_t(m,n;h,k)$ replacing property (2) of Definition \ref{def:Heffter} with the following one:
  \begin{itemize}
  \item [(2')] the multiset $\{\pm x \mid x\in A\}$ contains $\lambda$ times each element of $\Z_v\setminus J$,
  where $v=\frac{2nk}{\lambda}+t$.
  \end{itemize}

 Note that if $\lambda>1$ then $h$ and $k$ can also be equal to $2$.
 \begin{ex}\label{ex>1}
The following array $A$ is a $^2 \H(2,5;5,2)$, in fact the  multiset $\{\pm x \mid x\in A\}$ contains
$2$ times each element of $\Z_{11}\setminus \{0\}$.
$$\begin{array}{|r|r|r|r|r|}
\hline
  1 &  -2 & 3 & 4 & 5  \\ \hline
   -1 &  2 & -3 & -4 & -5  \\ \hline
\end{array}$$
\end{ex}

  Anyway in this paper we focus on the case $\lambda=1$, since several of our constructions cannot be naturally
  extended to the case $\lambda >1$, as it will be underlined in Remark \ref{NOlambda}.

\begin{ex}\label{ex}
Below we have a $\H_9(11;9)$, say $A$. Hence the elements of $A$ belongs to $\Z_{207}$
and we avoid the elements of the subgroup of $\Z_{207}$ of order $9$.
$$\begin{array}{|r|r|r|r|r|r|r|r|r|r|r|}
\hline
  10 & 55 & 101 & -90 &  & 13 & -22 &  & -78 & 67 & -56 \\ \hline
  -37 & -9 & 45 & 102 & -91 &  & 21 & -20 &  & -79 & 68 \\ \hline
  58 & -47 & -8 & 54 & 103 & -81 &  & 19 & -18 &  & -80 \\ \hline
  -70 & 59 & -38 & -7 & 44 & 93 & -82 &  & 17 & -16 &  \\ \hline
   & -71 & 60 & -48 & -6 & 53 & 94 & -83 &  & 15 & -14 \\ \hline
  -33 &  & -72 & 61 & -39 & 11 & 49 & 95 & -84 &  & 12 \\ \hline
  24 & -25 &  & -73 & 62 & -43 & 4 & 40 & 96 & -85 &  \\ \hline
   & 26 & -27 &  & -74 & 63 & -52 & 3 & 50 & 97 & -86 \\ \hline
  -87 &  & 28 & -29 &  & -75 & 64 & -42 & 2 & 41 & 98 \\ \hline
  99 & -88 &  & 30 & -31 &  & -76 & 65 & -51 & -5 & 57 \\\hline
  36 & 100 & -89 & & 32 & -34 &  & -77 & 66 & -35  & 1\\ \hline
\end{array}$$
\end{ex}

The focus of this paper is not the existence problem of Heffter arrays, but their connection with face $2$-colorable embeddings.
We point out that there are several papers in which Heffter arrays have been investigated to obtain
 biembeddings see \cite{A,BCDY, CDDYbiem, CMPPHeffter, CPPBiembeddings, CPEJC,DM}.
To present such a  connection, now we have to introduce the concepts of \emph{simple and compatible orderings}.

In the following, given two integers $a\leq b$, by $[a,b]$ we  denote the interval containing the integers $\{a,a+1,\ldots,b\}$.
If $a> b$, then $[a,b]$ is empty. The rows and the columns of an $m\times n$ array $A$ are denoted by $R_1,\ldots, R_m$ and by $C_1,\ldots, C_n$, respectively. Also we denote by $\E(A)$, $\E(R_i)$, $\E(C_j)$ the list of the elements of the filled cells of $A$, of the $i$-th row and of the $j$-th column, respectively. Given a finite subset $T$ of an abelian group $G$ and an ordering $\omega=(t_1, t_2,\ldots, t_k)$ of the elements of $T$, for any $i\in [1,k]$ let $s_i=\sum_{j=1}^{i}t_j$ be the $i$-th partial sum of $T$. The ordering $\omega$ is said to be \emph{simple} if $s_a\neq s_b$ for all $1\leq a < b \leq k$. We point out that if $s_k=0$ an ordering $\omega$ is simple if no proper subsequence of consecutive elements of $\omega$ sums to $0$. Note also that, if $\omega$ is a simple ordering, then $\omega^{-1}=(t_k,t_{k-1},\ldots, t_1)$ is simple too. Given an $m\times n$ p.f. array $A$, by $\omega_{R_i}$ and $\omega_{C_j}$ we denote an ordering of $\E(R_i)$ and $\E(C_j)$, respectively. If for any $i\in [1,m]$ and for any $j\in [1,n]$, the orderings $\omega_{R_i}$
and $\omega_{C_j}$ are simple, we define by $\omega_r=\omega_{R_1}\circ \cdots \circ \omega_{R_m}$ the simple ordering for the rows and by $\omega_c=\omega_{C_1}\circ \cdots \circ \omega_{C_n}$ the simple ordering for the columns. Also, by \emph{natural ordering} of a row (column) of $A$ one means the ordering from left to right (from top to bottom).
\begin{defin}
  A p.f. array $A$ on an abelian group $G$ is said to be
  \begin{itemize}
    \item \emph{simple} if there exists a simple ordering for each row and each column of $A$;
    \item \emph{globally simple} if the natural ordering of each row and each column of $A$ is simple.
  \end{itemize}
\end{defin}
It is easy to see that if $k\leq 4$ then every $\H_t(n;k)$ is globally simple. By a direct check one can see that the array of
Example \ref{ex} is globally simple.

\begin{defin}
  Given a relative Heffter array $A$, the orderings $\omega_r$ and $\omega_c$ are said to be \emph{compatible} if $\omega_c \circ \omega_r$ is a cycle of order $|\E(A)|$.
\end{defin}
 Reasoning as in \cite{CPPBiembeddings}, we get the following.
\begin{thm}\label{HeffterBiemb}
 Let $A$ be a relative Heffter array $\H_t(m,n;h,k)$ that admits two compatible orderings $\omega_r$ and $\omega_c$. Then there exists a cellular biembedding $\sigma$ of
  $K_{\frac{2nk+t}{t}\times t}$, such
   that every edge is on a face whose boundary has length
  $h$ and on a face whose boundary has length $k$,
  into an orientable surface of genus
  $$g=1+\frac{(nk-n-m-1)(2nk+t)}{2}.$$
  Moreover, setting $v=2nk+t$, $\sigma$ is $\Z_v$-regular.
\end{thm}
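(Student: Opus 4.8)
The plan is to reason exactly as in \cite{CPPBiembeddings}: construct $\sigma$ by hand as a $\Z_v$-invariant rotation system on a Cayley graph, and then read off the genus from Euler's formula. First I would fix the host graph. Put $v=2nk+t$ and $\Omega=\Z_v\setminus J$; by property~(2) of Definition~\ref{def:Heffter} we have $\Omega=\{\pm a\mid a\in\E(A)\}$, so $K_{\frac{v}{t}\times t}$ is the Cayley graph $\Gamma=\Cay(\Z_v,\Omega)$, with parts the cosets of $J$. As $\Gamma$ has more than one part it is connected, hence $\Omega$ generates $\Z_v$, and $\Z_v$ acts on $\Gamma$ by translation, regularly on the vertices. (When $v$ is even, $v/2\in J$, so $\Omega$ contains no element of order $2$ and the two arcs of an edge carry opposite differences.)

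Next I would write down the rotation. I would take $\rho$ to be $\Z_v$-invariant, hence determined by a single cyclic permutation $\rho_0$ of $\Omega$, defined --- for $a\in\E(A)$ lying in row $R_i$ and column $C_j$, and reading $\omega_{R_i},\omega_{C_j}$ cyclically --- by
\[
\rho_0(a)=-\omega_{R_i}(a),\qquad \rho_0(-a)=\omega_{C_j}(a),
\]
and extended by $\rho_x(x+a)=x+\rho_0(a)$. Since $\rho_0$ interchanges $\E(A)$ and $-\E(A)$, it is a single $2nk$-cycle if and only if $\rho_0^{\,2}$ restricted to $\E(A)$ is a single $nk$-cycle; a one-line computation gives $\rho_0^{\,2}|_{\E(A)}=\omega_c\circ\omega_r$, so $\rho_0$ is cyclic precisely because $\omega_r$ and $\omega_c$ are compatible. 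Thus $\Pi=(\Gamma,\rho)$ is a legitimate combinatorial embedding; it corresponds to a cellular embedding $\sigma$ of $\Gamma$ into an orientable surface, and since $\rho$ is $\Z_v$-invariant the translations are orientation-preserving automorphisms of $\sigma$, i.e.\ $\sigma$ is $\Z_v$-regular.

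Then I would run the face-tracing algorithm. Tracing the face through an arc $(0,-a)$ with $a\in\E(A)$ in row $R_i$, the rule $\rho_0(a)=-\omega_{R_i}(a)$ forces the boundary walk to visit the negatives of the partial sums of $R_i$ taken in the order $\omega_{R_i}$; since $\omega_{R_i}$ is simple these are pairwise distinct, so the walk is a genuine cycle, it closes after exactly $h$ steps, and the face is simple. Symmetrically, the face through an arc $(0,a)$ follows the partial sums of the column $C_j$ through $a$ in the order $\omega_{C_j}$ and is a simple $k$-cycle. Translating by $\Z_v$: every arc $(x,y)$ with $x-y\in\E(A)$ lies on a unique ``row face'' of length $h$ and its reverse lies on a unique ``column face'' of length $k$; hence every edge lies on a simple $h$-gonal face and on a simple $k$-gonal face, these two families partition the face set, and colouring the row faces with one colour and the column faces with the other exhibits $\sigma$ as a (simple) biembedding. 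Counting arcs ($vnk$ of one direction type, grouped into $h$-gons, and $vnk$ of the other, grouped into $k$-gons) and using $mh=nk$ gives $F=vm+vn=v(m+n)$.

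Finally, substituting $V=v$, $E=vnk$ and $F=v(m+n)$ into $V-E+F=2-2g$ gives
\[
g=1+\frac{(nk-n-m-1)\,v}{2}=1+\frac{(nk-n-m-1)(2nk+t)}{2},
\]
as claimed. I expect the face-tracing step to be the only genuine obstacle: the work is in pinning down the sign and direction conventions in the definition of $\rho_0$ so that the tracing recursion really reduces to ``take the next element of $\omega_{R_i}$ (resp.\ $\omega_{C_j}$)'', and in invoking simplicity of the orderings at exactly the right place, namely to guarantee that the traced walks close up only after $h$ (resp.\ $k$) steps and have no repeated vertex. Everything else is routine bookkeeping with the $\Z_v$-action, identical to \cite{CPPBiembeddings}.
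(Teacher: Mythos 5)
Your proposal is correct and follows exactly the route the paper intends: it reconstructs the Archdeacon rotation $\rho_0$ (the paper's equations (2.1)--(2.2)), checks that compatibility of $\omega_r,\omega_c$ makes $\rho_0$ a single $2nk$-cycle via $\rho_0^2|_{\E(A)}=\omega_c\circ\omega_r$, traces the row/column faces, and finishes with Euler's formula --- which is precisely the argument of \cite{CPPBiembeddings} that the paper cites in lieu of a proof. The only cosmetic point is that simplicity of the individual row/column orderings is needed for the faces to be \emph{cycles}, not for them to close after $h$ (resp.\ $k$) steps, which follows from the row/column sums being zero; your write-up slightly conflates these but the argument is sound.
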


\begin{rem}
As already remarked in the Introduction, in general, in Theorem \ref{HeffterBiemb}, the faces are circuits, but if the array is simple
with respect to the compatible orderings $\omega_r$ and $\omega_c$ then the faces are cycles.
Clearly, in this case the biembedding is simple.
\end{rem}

Now we recall the definition of the Archdeacon embedding, see \cite{A}. Let $A$ be a $\H_t(m,n;h,k)$; we consider the permutation $\rho_0$ on
$\pm \E(A)=\Z_{2nk+t}\setminus \frac{2nk+t}{t}\Z_{2nk+t}$, where $\frac{2nk+t}{t}\Z_{2nk+t}$
denotes the subgroup of $\Z_{2nk+t}$ of order $t$, so defined:
\begin{eqnarray}\label{ArchEmb}
\rho_0(a)&=&\begin{cases}
-\omega_r(a)\mbox{ if } a\in \E(A);\\
\omega_c(-a)\mbox{ if } a\in -\E(A).\\
\end{cases}
\end{eqnarray}
Note that the complete multipartite graph $K_{\frac{2nk+t}{t}\times t}$ is nothing but the Cayley graph on $\Z_{2nk+t}$
with connection set $\pm \E(A)$, denoted by $Cay[\Z_{2nk+t} : \pm \E(A)]$.
Now, we define a map $\rho$ on the set of the oriented edges of this graph as follows:
\begin{eqnarray}\label{ArchRho}
\rho((x,x+a))&=& (x,x+\rho_0(a)).
\end{eqnarray}
Since $\rho_0$ acts cyclically on $\pm \E(A)$, the map $\rho$ is a rotation of $Cay[\Z_{2nk+t} : \pm \E(A)]$.

\begin{ex}\label{orderings}
  Let $A$ be the $\H_9(11;9)$ given in Example \ref{ex}. Consider the following ordering for the rows
  $$\small{\begin{array}{rcl}
  \omega_r&=&( 10,55,101,-90,13,-22, -78, 67, -56 )( -37,-9, 45,102,-91,21,-20,-79,68)\\
  & & ( 58,-47, -8, 54, 103,-81, 19,-18,-80)(-70,59,-38,-7,44,93,-82,17,-16)\\
  & & (-71,60,-48,-6,53,94,-83,15,-14 )(-33,-72,61,-39,11,49,95,-84,12 )\\
  & & (24,-25,-73,62,-43,4,40,96,-85)(26,-27,-74,63,-52,3,50,97,-86)\\
  & & (-87,28,-29,-75,64,-42, 2, 41,98)(99,-88,30,-31,-76,65,-51,-5,57)\\
  & & (36,100,-89, 32,-34, -77, 66,-35,1)
    \end{array}}$$
    and the following ordering for the columns
      $$\small{\begin{array}{rcl}
  \omega_c&=& (10,36,99,-87,24,-33,-70,58,-37)(55,-9,-47,59,-71,-25,26,-88,100)\\
   & & (101, 45, -8,-38,60,-72,-27,28,-89)(-90,102,54,-7,-48,61,-73,-29,30)\\
   & & (-91,103,44,-6,-39,62,-74,-31,32)(13,-81,93,53,11,-43,63,-75,-34)\\
   & & (-22, 21, -82,94,49,4,-52,64,-76)(-20,19,-83,95,40,3,-42,65,-77)\\
   & & (-78,-18,17,-84,96,50,2,-51,66)(67,-79,-16,15,-85,97,41,-5,-35)\\
   & & (-56,68,-80,-14,12,-86,98,57,1).
  \end{array}}$$
  Hence,
  $$\small{\begin{array}{rcl}
  \omega_c \circ \omega_r&=& (10,-9,-8,-7,-6,11,4,3,2,-5,1,99,100,101,102,103,93,94,95,96,97,\\
  & & 98, 24,26,28,30,32, 13,21,19,17,15,12,-70,-71,-72,-73,-74,-75,\\
  & & -76,-77,-78,-79,-80,-37,-47,-38,-48,-39,-43,-52,-42,-51,\\
  & & -35,-56,36,55,45,54,44,53,49,40,50,41,57,-87,-89,-91,-82,-84,\\
  & & -86,-88,-90,-81,-83,-85,-33,-27,-31,-22,-18,-14,-25,-29,\\
  & & -34,-20,-16,58,59,60,61,62,63,64,65,66,67,68).
  \end{array}}$$
  So, since $\omega_c \circ \omega_r$ is a cycle of order $99=|\E(A)|$ the orderings are compatible.

\end{ex}

Looking for compatible orderings in the case of a globally simple Heffter array leads us to consider the following problem
introduced in \cite{CDP}.
Given an $m\times n$ \emph{toroidal} p.f. array $A$, by $r_i$ we denote the orientation of the $i$-th row,
precisely $r_i=1$ if it is from left to right and $r_i=-1$ if it is from right to left. Analogously, for the $j$-th
column, if its orientation $c_j$ is from top to bottom then $c_j=1$ otherwise $c_j=-1$. Assume that an orientation
$\R=(r_1,\dots,r_m)$
and $\C=(c_1,\dots,c_n)$ is fixed. Given an initial filled cell $(i_1,j_1)$ consider the sequence
$ L_{\R,\C}(i_1,j_1)=((i_1,j_1),(i_2,j_2),\ldots,(i_\ell,j_\ell),$ $(i_{\ell+1},j_{\ell+1}),\ldots)$
where $j_{\ell+1}$ is the column index of the filled cell $(i_\ell,j_{\ell+1})$ of the row $R_{i_\ell}$ next
to
$(i_\ell,j_\ell)$ in the orientation $r_{i_\ell}$,
and where $i_{\ell+1}$ is the row index of the filled cell of the column $C_{j_{\ell+1}}$ next to
$(i_\ell,j_{\ell+1})$ in the orientation $c_{j_{\ell+1}}$.
Given an element $(i_k,j_k)\in L_{\R,\C}(i_1,j_1)$ we define $S_{\R,\C}(i_k,j_k)$ as the element
$(i_{k+1},j_{k+1})\in L_{\R,\C}(i_1,j_1)$. It is easy to see that $S_{\R,\C}$ is well defined on the set of the filled cells of $A$.

The problem proposed in \cite{CDP} is the following:
\begin{KN}
Given a toroidal p.f. array $A$,
do there exist $\R$ and $\C$ such that the list $L_{\R,\C}$ covers all the filled
cells of $A$?
\end{KN}

The \probname\ for a given array $A$ is denoted by $P(A)$, known results can be found in \cite{CDP}.
Also, given a filled cell $(i,j)$, if $L_{\R,\C}(i,j)$ covers all the filled positions of $A$ we will
say that $(\R,\C)$ is a solution of $P(A)$.
The relationship between the Crazy Knight's Tour Problem and (globally simple) relative Heffter arrays is explained in the following result,
see \cite{CPPBiembeddings}.

\begin{cor}\label{preprecedente}
Let $A$ be a  relative Heffter array $\H_t(m,n;h,k)$ such that $P(A)$ admits a solution $(\R,\C)$. Then there exists a biembedding of $K_{\frac{2nk+t}{t}\times t}$, such that every edge is on a face whose boundary has length
  $h$ and on a face whose boundary has length $k$,
 into an orientable surface.

Moreover if $A$ is globally simple, then the biembedding is simple.
\end{cor}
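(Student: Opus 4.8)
The plan is to read off from a solution $(\R,\C)$ of $P(A)$ a pair of compatible orderings of $A$ and then invoke Theorem~\ref{HeffterBiemb} (equivalently, the Archdeacon embedding given by $(\ref{ArchEmb})$--$(\ref{ArchRho})$). Indeed, a solution of the \probname\ is essentially by construction the datum of a ``row ordering'' $\omega_r$ and a ``column ordering'' $\omega_c$ whose composition $\omega_c\circ\omega_r$ is a single cycle, and this is exactly what is needed to run the face-tracing algorithm.

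First I would fix a solution $(\R,\C)=((r_1,\dots,r_m),(c_1,\dots,c_n))$ of $P(A)$. For each $i\in[1,m]$ let $\omega_{R_i}$ be the ordering of $\E(R_i)$ obtained by reading the $i$-th row in the direction prescribed by $r_i$ (left to right if $r_i=1$, right to left if $r_i=-1$), and for each $j\in[1,n]$ let $\omega_{C_j}$ be the ordering of $\E(C_j)$ obtained by reading the $j$-th column in the direction prescribed by $c_j$; set $\omega_r=\omega_{R_1}\circ\cdots\circ\omega_{R_m}$ and $\omega_c=\omega_{C_1}\circ\cdots\circ\omega_{C_n}$ (compositions of cycles on pairwise disjoint supports, since the $nk=mh$ entries of $A$ are pairwise distinct by property~(2)). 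Tracing through the definitions, the map $S_{\R,\C}$ — which from a filled cell $(i,j)$ goes to the next filled cell $(i,j')$ of its row in orientation $r_i$ and then to the next filled cell of column $C_{j'}$ in orientation $c_{j'}$ — is, under the bijection ``filled cell $\mapsto$ its entry'', precisely the permutation $\omega_c\circ\omega_r$ of $\E(A)$. Hence $(\R,\C)$ being a solution of $P(A)$, i.e.\ $L_{\R,\C}$ covering all the filled cells, is equivalent to $\omega_c\circ\omega_r$ being a single cycle of order $|\E(A)|$; that is, $\omega_r$ and $\omega_c$ are compatible.

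Now I would feed $\omega_r,\omega_c$ into Theorem~\ref{HeffterBiemb}: concretely, the fact that $\omega_c\circ\omega_r$ is a single cycle makes the Archdeacon map $\rho$ a genuine rotation of $Cay[\Z_{2nk+t}:\pm\E(A)]=K_{\frac{2nk+t}{t}\times t}$, so $(Cay[\Z_{2nk+t}:\pm\E(A)],\rho)$ is a combinatorial embedding into an orientable surface; running the face-tracing algorithm one sees, exactly as in \cite{CPPBiembeddings}, that the faces split into ``row faces'', each of boundary length $h$, and ``column faces'', each of boundary length $k$, and that this face set is $2$-colorable with the rows one color and the columns the other, so the embedding is a biembedding with the stated face-length property. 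For the ``moreover'' part, recall that the reverse of a simple ordering is simple; hence, if $A$ is globally simple, then for \emph{every} choice of $\R$ and $\C$ each $\omega_{R_i}$ and each $\omega_{C_j}$ above is simple, i.e.\ $A$ is simple with respect to the compatible orderings $\omega_r,\omega_c$, and by the Remark following Theorem~\ref{HeffterBiemb} the faces are then cycles, so the biembedding is simple.

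The only delicate point is the middle step: one has to match the ``row step, then column step'' description of $S_{\R,\C}$ with the order of composition in $\omega_c\circ\omega_r$, keep track of the orientation signs $r_i,c_j$ (reversing a line just replaces the corresponding natural ordering by its inverse, which is why global simplicity suffices for the last assertion regardless of the chosen orientations), and check that the two color classes of faces produced by the face-tracing on $\rho$ are exactly the ``row circuits'' and ``column circuits'' of lengths $h$ and $k$ — this is precisely the verification carried out in \cite{CPPBiembeddings} for Theorem~\ref{HeffterBiemb}. Everything else is formal, and in particular the resulting surface has the genus recorded in Theorem~\ref{HeffterBiemb}.
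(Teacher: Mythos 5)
Your proposal is correct and follows exactly the route the paper intends: translate the solution $(\R,\C)$ of $P(A)$ into the induced orderings $\omega_r,\omega_c$, observe (as in Remark~\ref{RCordinering}) that being a solution is equivalent to $\omega_c\circ\omega_r$ being a cycle of order $|\E(A)|$, i.e.\ to compatibility, and then apply Theorem~\ref{HeffterBiemb}, with the ``moreover'' part following from the fact that the reverse of a simple ordering is simple, so a globally simple array is simple with respect to any orientation-induced orderings. The paper itself defers the details to \cite{CPPBiembeddings} rather than writing them out, but your argument is precisely that verification.
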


\begin{ex}\label{exPA}
Let $A$ be the $H_9(11;9)$ of Example \ref{ex}. Let $\R=(1,1,\ldots,1)$ and $\C=(-1,1,1,\ldots,1)$.
Now we consider  $S_{\R,\C}(1,1)$
and, in the following table, in each position we write $j$ if we reach that position after having applied $S_{\R,\C}$ to $(1,1)$
exactly $j$ times.
$$\begin{array}{r|r|r|r|r|r|r|r|r|r|r|r|}
& \uparrow & \downarrow & \downarrow & \downarrow & \downarrow & \downarrow & \downarrow & \downarrow & \downarrow & \downarrow & \downarrow\\
\hline
 \rightarrow & 0 & 56 & 13 & 73 &  & 27 & 80 &  & 41 & 97 & 54 \\ \hline
 \rightarrow &  44 & 1 & 57 & 14 & 68 &  & 28 & 86 &  & 42 & 98 \\ \hline
  \rightarrow & 88 & 45 & 2 & 58 & 15 & 74 &  & 29 & 81 &  & 43 \\ \hline
 \rightarrow &  33 & 89 & 46 & 3 & 59 & 16 & 69 &  & 30 & 87 &  \\ \hline
   \rightarrow & & 34 & 90 & 47 & 4 & 60 & 17 & 75 &  & 31 & 82 \\ \hline
  \rightarrow & 77 &  & 35 & 91 & 48 & 5 & 61 & 18 & 70 &  & 32 \\ \hline
 \rightarrow &  22 & 83 &  & 36 & 92 & 49 & 6 & 62 & 19 & 76 &  \\ \hline
   \rightarrow & & 23 & 78 &  & 37 & 93 & 50 & 7 & 63 & 20 & 71 \\ \hline
 \rightarrow &  66 &  & 24 & 84 &  & 38 & 94 & 51 & 8 & 64 & 21 \\ \hline
  \rightarrow & 11 & 72 &  & 25 & 79 &  & 39 & 95 & 52 & 9 & 65 \\\hline
  \rightarrow & 55 & 12 & 67 & & 26 & 85 &  & 40 & 96 & 53  & 10\\ \hline
\end{array}$$
Note that $L_{\R,\C}(1,1)$ covers all filled cells of $A$, hence $(\R,\C)$ is a solution of $P(A)$.
\end{ex}

\begin{rem}\label{RCordinering}
The orderings $\omega_r$ and $\omega_c$ of $A$ described in Example \ref{orderings},
correspond to the vectors $\R$ and $\C$ of Example \ref{exPA}, respectively.
Hence,  we also say that $(\R,\C)$ induces the cycle $\omega_c \circ \omega_r$.

Clearly, given an array $A$, a pair $(\R,\C)$ is a solution of $P(A)$ if and only if  the induced permutation
$\omega_c \circ \omega_r$ is a cycle of order $|\E(A)|$.
\end{rem}

Now, to present the results of this section we need some other definitions and notations.
By  $skel(A)$ we denote the \emph{skeleton} of $A$, that is the set of the filled positions of $A$.
Given an $n \times n$ p.f. array $A$, for $i\in [1,n]$ we define the $i$-th diagonal of $A$ as follows:
$$D_i=\{(i,1),(i+1,2),\ldots,(i-1,n)\}.$$
Here all the arithmetic on the row and column indices is performed modulo $n$, where $\{1,2,\ldots,n\}$ is the set of reduced residues.
The diagonals $D_{i+1}, D_{i+2}, \ldots, D_{i+k}$ are called $k$ \emph{consecutive diagonals}.
 A set of $t$ consecutive diagonals $S=\{D_{i+1},D_{i+2},\ldots, D_{i+t}\}$
is said to be an \emph{empty strip of width $t$} if $D_{i+1},D_{i+2},\ldots, D_{i+t}$ are empty diagonals, while $D_i$
and $D_{i+t+1}$ are non-empty diagonals.
\begin{defin}
  Let $n,k$ be integers such that $n\geq k\geq 1$. An $n\times n$ p.f. array $A$ is said to be:
  \begin{itemize}
    \item \emph{$k$-diagonal} if the non-empty cells of $A$ are exactly those of $k$ diagonals;
    \item \emph{cyclically $k$-diagonal} if the non-empty cells of $A$ are exactly those of $k$ consecutive diagonals;
     \item \emph{$k$-diagonal with width $t_1,t_2,\ldots, t_s$} if it is $k$-diagonal and has $s$ empty strips with width $t_1,t_2,\ldots,t_s$, respectively;
      \item \emph{$k$-diagonal with width $t$} if it is $k$-diagonal and all its empty strips have width $t$.
  \end{itemize}
\end{defin}
Clearly a cyclically $k$-diagonal array of size $n$ is nothing but a $k$-diagonal array with width $n-k$.
Note that the array of Example \ref{ex} is $9$-diagonal with width $1$.

\begin{lem}\label{-R-C}
 Let $A$ be a p.f. array. If $(\R,\C)$ is a solution of $P(A)$, then also $(-\R,-\C)$ is
 a solution of $P(A)$.
\end{lem}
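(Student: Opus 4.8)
The plan is to recast the statement in the language of the row and column orderings $\omega_r$ and $\omega_c$, and then to invoke the characterization recorded in Remark~\ref{RCordinering}: a pair $(\R,\C)$ is a solution of $P(A)$ precisely when the permutation it induces, namely $\omega_c\circ\omega_r$, is a single cycle, i.e.\ has order $|\E(A)|$.

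The first step is the elementary observation that reversing an orientation reverses the corresponding cyclic ordering. If $\omega_{R_i}$ is the cyclic ordering of $\E(R_i)$ determined by the orientation $r_i$ (reading row $R_i$ toroidally in that direction), then the orientation $-r_i$ yields the reversed cyclic ordering, which as a permutation is $\omega_{R_i}^{-1}$; the same holds for each column. Since $\omega_r=\omega_{R_1}\circ\cdots\circ\omega_{R_m}$ is a product of permutations with pairwise disjoint supports, the ordering attached to $-\R$ is $\omega_{R_1}^{-1}\circ\cdots\circ\omega_{R_m}^{-1}=\omega_r^{-1}$, and likewise the ordering attached to $-\C$ is $\omega_c^{-1}$. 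Hence the permutation induced by $(-\R,-\C)$ equals
$$\omega_c^{-1}\circ\omega_r^{-1}=(\omega_r\circ\omega_c)^{-1}.$$

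The second step is to compare $\omega_r\circ\omega_c$ with $\omega_c\circ\omega_r$. These are conjugate, since $\omega_r\circ\omega_c=\omega_r\,(\omega_c\circ\omega_r)\,\omega_r^{-1}$, so they have the same cycle type; and passing to inverses preserves cycle type as well. Therefore $(\omega_r\circ\omega_c)^{-1}$ is a single cycle of order $|\E(A)|$ if and only if $\omega_c\circ\omega_r$ is, which by Remark~\ref{RCordinering} is exactly the assertion that $(-\R,-\C)$ is a solution of $P(A)$ if and only if $(\R,\C)$ is.

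The only point requiring a little care is the second step: because the step map $S_{\R,\C}$ traverses first a row and then a column, $S_{-\R,-\C}$ is not literally the inverse of $S_{\R,\C}$ but only a conjugate of it (by $\omega_r$), so one must keep the order of composition straight when manipulating these permutations. I do not expect any genuine obstacle beyond this bookkeeping; an alternative, more hands-on route would be to track directly how $L_{-\R,-\C}$ runs through the filled cells of $A$ in relation to $L_{\R,\C}$, but the permutation computation above is shorter and cleaner.
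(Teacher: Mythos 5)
Your proof is correct and follows essentially the same route as the paper's: both identify the permutation induced by $(-\R,-\C)$ as $\omega_c^{-1}\circ\omega_r^{-1}$ and observe that this is a conjugate of $(\omega_c\circ\omega_r)^{-1}$, hence a cycle of order $|\E(A)|$ exactly when $\omega_c\circ\omega_r$ is. Your explicit remark that $S_{-\R,-\C}$ is only a conjugate, not the literal inverse, of $S_{\R,\C}$ is precisely the bookkeeping point the paper's proof handles with its conjugation step.
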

\begin{proof}
By Remark \ref{RCordinering}, if $(\R,\C)$ is a solution of $P(A)$, then the induced cycle $\omega_c \circ \omega_r$ has order $|\E(A)|$.
Clearly also $(\omega_c \circ \omega_r)^{-1}= \omega_r^{-1} \circ \omega_c^{-1}$ is a cycle of the same order.
The same holds if we consider the conjugate  $\omega_r \circ (\omega_r^{-1} \circ \omega_c^{-1}) \circ \omega_r^{-1}=
\omega_c^{-1} \circ \omega_r^{-1}$, hence $(-\R,-\C)$ is a solution, too.
\end{proof}

\begin{lem}\label{CR}
 Let $A$ be a cyclically $k$-diagonal array of size $n\geq k$ and let $\R=(1,1,\ldots,$ $1)$. If $(\R,\C)$ is a solution of $P(A)$, then also $(\C,\R)$ is
 a solution of $P(A)$.
\end{lem}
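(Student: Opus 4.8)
The plan is to build a single bijection $\phi$ of the set $skel(A)$ of filled cells of $A$ that interchanges the ``row dynamics'' and the ``column dynamics'' of $A$, and then to deduce the statement from two elementary facts: conjugation preserves the cycle type of a permutation, and $XY$ is always conjugate to $YX$.

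Since $A$ is cyclically $k$-diagonal, its filled cells are exactly the pairs $(a,b)\in\Z_n\times\Z_n$ with $a-b\in\{i,i+1,\dots,i+k-1\}$ for one fixed value of $i$ (the cell $(a,b)$ lies on the diagonal $D_{a-b+1}$, so a block of $k$ consecutive diagonals corresponds to $k$ consecutive values of $a-b$). First I would introduce the ``shifted transpose''
$$\phi\colon (a,b)\longmapsto \bigl(b,\ a-2i-k+1\bigr)$$
and check, by a short computation modulo $n$, that $\phi$ is a bijection of $\Z_n\times\Z_n$ mapping $skel(A)$ onto itself; that it carries the filled cells of the $a$-th row bijectively onto those of the $(a-2i-k+1)$-th column and the filled cells of the $b$-th column bijectively onto those of the $b$-th row; and --- the point on which everything hinges --- that it sends ``one step rightward along a row'' to ``one step downward along a column'' and conversely, \emph{with no reversal of direction}.

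Next I would use the factorisation $S_{\R,\C}=V_\C\circ H_\R$, which the definition of $S_{\R,\C}$ provides directly, where $H_\R$ (resp. $V_\C$) denotes the permutation of $skel(A)$ that, within each row (resp. column), advances to the next filled cell in the orientation prescribed by $\R$ (resp. $\C$). Because $\R=(1,\dots,1)$, the properties of $\phi$ above yield $\phi\circ H_\R\circ\phi^{-1}=V_\R$ (every column read downward) and $\phi\circ V_\C\circ\phi^{-1}=H_\C$ (the tuple $\C$, with indices and signs unchanged, now read as a tuple of row orientations). Hence
$$\phi\circ S_{\R,\C}\circ\phi^{-1}=\bigl(\phi\circ V_\C\circ\phi^{-1}\bigr)\circ\bigl(\phi\circ H_\R\circ\phi^{-1}\bigr)=H_\C\circ V_\R.$$
Now $(\R,\C)$ is a solution of $P(A)$ exactly when $S_{\R,\C}$ is a single cycle on all the filled cells of $A$ (this is the definition of $P(A)$, since $L_{\R,\C}$ is the forward orbit of $S_{\R,\C}$); conjugating by $\phi$ leaves this unchanged, so it is equivalent to $H_\C\circ V_\R$ being a single cycle, and, since $XY$ is conjugate to $YX$ via $X$, equivalent to $V_\R\circ H_\C=S_{\C,\R}$ being a single cycle, that is, to $(\C,\R)$ being a solution of $P(A)$.

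I expect the construction and verification of $\phi$ to be the only real obstacle. The additive constant $-2i-k+1$ has to be pinned down so that $\phi$ simultaneously preserves the skeleton \emph{and} induces on the orientations neither a cyclic translation of the indices nor a change of sign; a carelessly chosen transpose-type map yields instead a tuple equal to $\C$ only up to a cyclic shift and/or a global sign, which would not close the argument by itself (though it could then be repaired by appealing to Lemma \ref{-R-C}). Once the correct $\phi$ is in hand, all that remains is the formal cycle-type bookkeeping displayed above.
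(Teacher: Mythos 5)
Your proof is correct and follows essentially the same route as the paper's: the paper likewise transposes the array, translates the transpose back onto the original skeleton (your shifted transpose $\phi$ fuses these two steps into a single map), uses the conjugacy of $\omega_c\circ\omega_r$ with $\omega_r\circ\omega_c$ (your $XY\sim YX$ step), and exploits the constancy of $\R$ to absorb the resulting cyclic shift of the row orientations. The only difference is presentational: you phrase everything as conjugation of the cell permutation $S_{\R,\C}$ by an explicit skeleton-preserving bijection, where the paper argues via the induced orderings and an auxiliary array $B$.
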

\begin{proof}
We can assume, without loss of generality, that $(1,1)$ is a filled cell of $A$. If $(\R,\C)$ is a solution of $P(A)$, then the induced cycle $\omega_c \circ \omega_r$ has order $|\E(A)|$. Now,
since if we commute $\omega_r$ and $\omega_c$ we still obtain a cycle of order $|\E(A)|$,
 then $(\C,\R)$ is a solution of $P(A^t)$, where by $A^t$ we denote the transposed of $A$.
Note that, in general, $A$ and $A^t$ do not have the same skeleton.
For example, below $A$ is a cyclically $4$-diagonal array of size $6$ (we put a ``$\bullet$'' in the filled cells), $\R=(1,1,1,1,1,1)$, as in the hypothesis, and $\C=(1,-1,-1,1,-1,1)$.
$$
\begin{array}{r|r|r|r|r|r|r|}
&  \downarrow & \uparrow & \uparrow & \downarrow & \uparrow & \downarrow \\
\hline
 \rightarrow & \bullet & & & \bullet & \bullet & \bullet  \\ \hline
 \rightarrow  & \bullet & \bullet & & & \bullet & \bullet \\ \hline
  \rightarrow & \bullet &  \bullet & \bullet & & & \bullet \\ \hline
 \rightarrow &  \bullet  & \bullet & \bullet & \bullet & & \\ \hline
   \rightarrow & &\bullet & \bullet & \bullet & \bullet & \\ \hline
  \rightarrow & & & \bullet & \bullet & \bullet & \bullet \\ \hline
\end{array}
\quad \quad \begin{array}{r|r|r|r|r|r|r|}
&  \downarrow & \downarrow & \downarrow & \downarrow & \downarrow & \downarrow \\
\hline
 \rightarrow &  \bullet  & \bullet & \bullet & \bullet & & \\ \hline
   \leftarrow & &\bullet & \bullet & \bullet & \bullet & \\ \hline
  \leftarrow & & & \bullet & \bullet & \bullet & \bullet \\ \hline
   \rightarrow & \bullet & & & \bullet & \bullet & \bullet  \\ \hline
 \leftarrow  & \bullet & \bullet & & & \bullet & \bullet \\ \hline
  \rightarrow & \bullet &  \bullet & \bullet & & & \bullet \\ \hline
\end{array}\quad \quad \begin{array}{r|r|r|r|r|r|r|}
&  \downarrow & \downarrow & \downarrow & \downarrow & \downarrow & \downarrow \\
\hline
 \rightarrow & \bullet & & & \bullet & \bullet & \bullet  \\ \hline
 \leftarrow   & \bullet & \bullet & & & \bullet & \bullet \\ \hline
  \leftarrow  & \bullet &  \bullet & \bullet & & & \bullet \\ \hline
 \rightarrow &  \bullet  & \bullet & \bullet & \bullet & & \\ \hline
  \leftarrow  & &\bullet & \bullet & \bullet & \bullet & \\ \hline
  \rightarrow & & & \bullet & \bullet & \bullet & \bullet \\ \hline
\end{array}$$
\begin{center}
Arrays $A$, $A^t$ and $B$.
\end{center}

Note that, instead of $A^t$, we can consider the array $B$ on the right obtained from $A^t$ by a translation
on the rows of length $k-1$. We point out that $B$ has the same skeleton of $A$.
We remark that applying $(\C,\R)$ to $A^t$ is equivalent to apply $(\C, \overline{\R})$ to $B$,
where if $\R=(r_1,r_2,\ldots,r_n)$, then
$\overline{\R}=(r_k,\ldots,r_n,r_1,\ldots,r_{k-1})$.
 Since $\R=(1,1,\ldots,1)$ then also $\overline{\R}=(1,1,\ldots,1)$.
 Hence $(\C,\R)$ is a solution of $P(B)$, but
 $skel(B)=skel(A)$, so $(\C,\R)$ is a solution of $P(A)$ too.
\end{proof}

\begin{prop}\label{differentEmbeddings1}
Let $A$ and $B$ be two distinct globally simple $\H_t(m,n;h,k)$s such that $\E(A)=\E(B)$. Assume that both $A$ and $B$ admit compatible orderings and denote them, respectively, by $(\omega^A_r,\omega^A_c)$ and by $(\omega^B_r,\omega^B_c)$. Then $(\omega^A_r,\omega^A_c)$ and $(\omega^B_r,\omega^B_c)$ determine the same $k$-gonal biembedding of $K_{\frac{2nk+t}{t}\times t}$ if and only if $\omega^A_r=\omega^B_r$ and $\omega^A_c=\omega^B_c$.
\end{prop}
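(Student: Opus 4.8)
The plan is to unwind the Archdeacon construction and reduce the statement to an equality of rotation systems. Put $v=2nk+t$. Since $\E(A)=\E(B)$, both arrays give biembeddings (Corollary~\ref{preprecedente}, or Theorem~\ref{HeffterBiemb}) of one and the same graph, namely the Cayley graph $\Gamma=\Cay[\Z_v:\pm\E(A)]$ on the vertex set $\Z_v$; write $\Pi^A=(\Gamma,\rho^A)$ and $\Pi^B=(\Gamma,\rho^B)$ for the corresponding combinatorial embeddings, where $\rho^A$ and $\rho^B$ are built from the permutations $\rho_0^A$ and $\rho_0^B$ of $\pm\E(A)$ via (\ref{ArchEmb})--(\ref{ArchRho}). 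These are two combinatorial embeddings of the same graph on the same labelled vertex set, so ``determining the same biembedding'' is to be read as $\Pi^A=\Pi^B$, that is, $\rho^A=\rho^B$; this is unambiguous also modulo the $\Z_v$-regular action, since every translation $x\mapsto x+c$ commutes with $\rho$. In this language the ``if'' implication is immediate: $\omega^A_r=\omega^B_r$ and $\omega^A_c=\omega^B_c$ give $\rho_0^A=\rho_0^B$ directly from (\ref{ArchEmb}), whence $\rho^A=\rho^B$ by (\ref{ArchRho}) --- and no use is made here of the distinctness of $A$ and $B$.

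For the converse, I would first observe that $\rho^A=\rho^B$ forces $\rho_0^A=\rho_0^B$: evaluating $\rho^A((x,x+a))=(x,x+\rho_0^A(a))$, together with the analogous identity for $B$, at $x=0$ and an arbitrary $a\in\pm\E(A)$ yields $\rho_0^A(a)=\rho_0^B(a)$. Then I would use property~(2) of Definition~\ref{def:Heffter}: for each $x\in\Z_v\setminus J$ exactly one of $x,-x$ occurs in the array, so $\E(A)=\E(B)$ and $-\E(A)=-\E(B)$ are disjoint and partition $\pm\E(A)=\Z_v\setminus J$. Hence the two branches of (\ref{ArchEmb}) can be compared independently. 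On $\E(A)$ one has $\rho_0^A(a)=-\omega^A_r(a)$ and $\rho_0^B(a)=-\omega^B_r(a)$, so $\rho_0^A=\rho_0^B$ on $\E(A)$ is equivalent to $\omega^A_r(a)=\omega^B_r(a)$ for all $a\in\E(A)=\E(B)$, i.e. to $\omega^A_r=\omega^B_r$. On $-\E(A)$ one has $\rho_0^A(a)=\omega^A_c(-a)$ and $\rho_0^B(a)=\omega^B_c(-a)$, so $\rho_0^A=\rho_0^B$ there is equivalent, after setting $b=-a$, to $\omega^A_c(b)=\omega^B_c(b)$ for all $b\in\E(A)$, i.e. to $\omega^A_c=\omega^B_c$. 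Chaining these equivalences with the two reductions above proves the statement.

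I do not anticipate a real obstacle: the argument is a careful unravelling of the definitions, and the only points that truly need care are the structural observations driving it --- that $\E(A)=\E(B)$ places both embeddings on a single fixed Cayley graph, so that ``same biembedding'' can be taken literally as $\rho^A=\rho^B$; that Heffter property~(2) splits the connection set as $\E(A)\sqcup(-\E(A))$, so that $\omega_r$ and $\omega_c$ can each be recovered from $\rho_0$ on a separate half; and that global simplicity together with the chosen compatible orderings is exactly what guarantees, through Theorem~\ref{HeffterBiemb} and the remark following it, that the objects under comparison are bona fide simple $k$-gonal biembeddings --- a hypothesis that is otherwise inert in the equivalence itself.
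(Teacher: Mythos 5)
Your proof is correct and follows essentially the same route as the paper's: both reduce ``determining the same biembedding'' to the equality $\rho^A=\rho^B$, hence $\rho_0^A=\rho_0^B$, and then read off the row and column orderings from the two branches of the definition in (\ref{ArchEmb}); the paper phrases the forward direction as a contrapositive, while you argue it directly and make explicit the partition $\pm\E(A)=\E(A)\sqcup(-\E(A))$ coming from property (2) of Definition \ref{def:Heffter}, which the paper leaves implicit. No substantive difference.
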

\begin{proof}
Suppose, by contradiction, that there exists $a\in \E(A)=\E(B)$ such that $\omega^A_r(a)\not=\omega^B_r(a)$ or $\omega^A_c(a)\not=\omega^B_c(a)$. In the following we assume, without loss of generality, that the previous condition holds for the rows.
Hence, recalling equations (\ref{ArchEmb}) and (\ref{ArchRho}),  from $\omega^A_r(a)\not=\omega^B_r(a)$, it follows that the maps $\rho^A$ and $\rho^B$ are different. Therefore $(\omega^A_r,\omega^A_c)$ and $(\omega^B_r,\omega^B_c)$ determine different $k$-gonal biembeddings of $K_{\frac{2nk+t}{t} \times t}$.

Conversely, if we have that $\omega^A_r=\omega^B_r$ and $\omega^A_c=\omega^B_c$ the maps $\rho_0^A$ and $\rho_0^B$ coincide and hence also $\rho^A=\rho^B$. In this case the compatible orderings of $A$ and of $B$ determine the same $k$-gonal biembedding of $K_{\frac{2nk+t}{t}\times t}$.
\end{proof}

\begin{rem}\label{NOlambda}
Let $A$ be a $\H_t(m,n;h,k)$. It is not hard to see that distinct solutions of $P(A)$ induce distinct
orderings $\omega_r$ and  $\omega_c$ of the rows and columns of $A$, respectively. Also,
distinct permutations determine distinct biembeddings of $K_{\frac{2nk+t}{t} \times t}$.
These facts, in general, do not hold for $^\lambda \H_t(m,n;h,k)$ with $\lambda>1$.
In the following example we show how two distinct solutions of $P(A)$, where $A$ is a $\lambda$-fold Heffter array with $\lambda >1$, induce the same permutations
$\omega_r$ and $\omega_c$.
Moreover, when $\lambda>1$, the definition of the Archdeacon embedding is more complicated
since the complete multipartite multigraph $^\lambda K_{\left(\frac{2nk}{\lambda t}+1\right)\times t}$ has repeated edges, see \cite{CPEJC}.
In this case, one could show that distinct solutions of $P(A)$ can induce the same biembedding.
\end{rem}

\begin{ex}
Let $A$ be the $^2 \H(2,5;5,2)$ of Example \ref{ex>1}.
Set $\R=(1,1)$, $\C_1=(1,1,1,1,1)$, $\C_2=(1,-1,-1,1,-1)$.
It is easy to see that $(\R,\C_1)$ and $(\R,\C_2)$ are two distinct solutions of $P(A)$.
Anyway they induce the same permutations:
  $$\begin{array}{rcl}
  \omega_r &=& (1,-2,3,4,5)(-1,2,-3,-4,-5),\\
  \omega_c &=& (1,-1)(-2,2)(3,-3)(4,-4)(5,-5).
  \end{array}$$
\end{ex}

\begin{cor}\label{differentEmbeddings2}
Let $A$ and $B$ be two  $k$-diagonal, globally simple $\H_t(n;k)$s such that:
\begin{itemize}
\item[(1)] there exists a non-empty diagonal $D_{\bar{i}}$ where $A$ and $B$ coincide;
\item[(2)] $\E(A)=\E(B)$ and $skel(A)=skel(B)$;
\item[(3)] both $P(A)$ and $P(B)$ admit a solution denoted, respectively, by $(\R_A,\C_A)$ and by $(\R_B,\C_B)$.
\end{itemize} Then $(\R_A,\C_A)$ and $(\R_B,\C_B)$ determine the same $k$-gonal biembeddings of $K_{\frac{2nk+t}{t}\times t}$ if and only if $A=B$ and $(\R_A,\C_A)=(\R_B,\C_B)$.
\end{cor}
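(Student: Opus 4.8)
The plan is to deduce the statement from Proposition \ref{differentEmbeddings1} and then to upgrade the resulting equality of orderings to the equality of the arrays and of the orientations, by exploiting the diagonal structure together with hypothesis (1). The \emph{if} part needs no work: if $A=B$ and $(\R_A,\C_A)=(\R_B,\C_B)$, then by Remark \ref{RCordinering} the two solutions induce the same pair $(\omega_r,\omega_c)$, hence the same rotation $\rho$ via (\ref{ArchEmb}) and (\ref{ArchRho}), and therefore the same biembedding. For the converse I would first note that, since $(\R_A,\C_A)$ solves $P(A)$ and $A$ is globally simple, the induced orderings $(\omega^A_r,\omega^A_c)$ are compatible and simple (Remark \ref{RCordinering}), and likewise for $B$; then, arguing as in the proof of Proposition \ref{differentEmbeddings1} (whose argument never uses that the two arrays are distinct), the coincidence of the two biembeddings yields $\omega^A_r=\omega^B_r$ and $\omega^A_c=\omega^B_c$ as permutations of $\E(A)=\E(B)$.

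Next I would recover $A=B$. Since each element of an $\H_t(n;k)$ occupies exactly one cell, it lies in exactly one row and exactly one column, so the cycles of $\omega^A_r$ are precisely the sets $\E(R_i^A)$, $i\in[1,n]$, and the cycles of $\omega^A_c$ are the sets $\E(C_j^A)$, and similarly for $B$. The reference diagonal $D_{\bar i}$ of hypothesis (1) is non-empty and meets every row in exactly one cell and every column in exactly one cell; since $skel(A)=skel(B)$ by (2), hypothesis (1) supplies, for each $i$, an element $a_i\in\E(R_i^A)\cap\E(R_i^B)$ and, for each $j$, an element $b_j\in\E(C_j^A)\cap\E(C_j^B)$. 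As $\omega^A_r=\omega^B_r$ have the same cycle partition, the cycle through $a_i$ agrees on the two sides, so $\E(R_i^A)=\E(R_i^B)$; likewise $\E(C_j^A)=\E(C_j^B)$. Finally, for every filled cell $(i,j)$ the entry $A_{i,j}$ is the unique element common to $\E(R_i^A)$ and $\E(C_j^A)$ — uniqueness because an element occurring in row $i$ and in column $j$ must sit at their only shared cell $(i,j)$ — and the same holds for $B_{i,j}$ with the now identical sets; hence $A_{i,j}=B_{i,j}$ for all filled cells, i.e. $A=B$.

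It remains to see $\R_A=\R_B$ and $\C_A=\C_B$. Here I would use $k\geq 3$: for the fixed array $A=B$, a solution of $P(A)$ orders the entries of row $i$ either in the natural left-to-right cyclic order (if $r_i=1$) or in its reverse (if $r_i=-1$), and a cyclic sequence of $k\geq 3$ distinct elements never coincides with its reverse; hence the cycle of $\omega^A_r$ supported on $\E(R_i^A)$ determines $r_i^A$, and $\omega^A_r=\omega^B_r$ forces $\R_A=\R_B$, the columns being handled in the same way.

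The step I expect to carry the real content — as opposed to bookkeeping — is the reconstruction of $A$ from its row-sets and column-sets in the middle paragraph: it hinges on two features that must be invoked precisely, namely that all entries of a Heffter array are pairwise distinct and that the single diagonal $D_{\bar i}$ simultaneously meets every row and every column, so that hypothesis (1) alone suffices to align the cycle partitions of $\omega_r$ and of $\omega_c$ on both arrays at once. Everything else follows directly from Proposition \ref{differentEmbeddings1} and from $k\geq 3$.
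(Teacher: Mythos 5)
Your proof is correct, but it is organized in the opposite order to the paper's. Both arguments start by invoking Proposition \ref{differentEmbeddings1} to obtain $\omega_r^A=\omega_r^B$ and $\omega_c^A=\omega_c^B$. From there the paper first pins down the orientations: assuming for contradiction that, say, $(r_A)_1=-(r_B)_1$ on the row meeting the common diagonal at $(1,1)$, it observes that the element $a_{1,i_2}$ then lies in column $i_2$ of $A$ but in column $i_k$ of $B$, so the diagonal entry $a_{i_2,i_2}$ (common to both arrays) has different $\omega_c$-orbits in the two arrays, contradicting $\omega_c^A=\omega_c^B$; only afterwards does it recover $A=B$ row by row, propagating equality along each row from the diagonal entry using $\R_A=\R_B$ and $skel(A)=skel(B)$. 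You instead reconstruct the arrays first, noting that the cycles of $\omega_r$ (resp.\ $\omega_c$) are exactly the row-sets (resp.\ column-sets), that the common non-empty diagonal anchors the indexing of these cycles, and that the entry at a filled cell $(i,j)$ is the unique element of $\E(R_i)\cap\E(C_j)$ because the entries of a $\lambda=1$ Heffter array are pairwise distinct; you then read off the orientations from the fact that a cycle on $k\geq 3$ distinct elements differs from its inverse. Your route is arguably cleaner: it makes explicit where each hypothesis enters (the diagonal only aligns the row and column labellings, $k\geq 3$ only determines the orientations) and it shows in passing that $skel(A)=skel(B)$ is a consequence of the remaining hypotheses rather than an independent assumption. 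The paper's argument is more hands-on but rests on the same ingredients, so the two proofs are genuinely different decompositions of the same reconstruction problem.
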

\proof
Clearly, if $A=B$ and $(\R_A,\C_A)=(\R_B,\C_B)$, we obtain the same biembedding.

Now, assume that $(\R_A,\C_A)$ and $(\R_B,\C_B)$  determine the same $k$-gonal biembedding of $K_{\frac{2nk+t}{t}\times t}$.
We have to prove that $A=B$ and $(\R_A,\C_A)=(\R_B,\C_B)$. At this purpose we will first suppose, by contradiction, that $(\R_A,\C_A)\not=(\R_B,\C_B)$, then we will also consider the possibility that $A\not=B$. Our assumption means that either $\R_A\not=\R_B$ or $\C_A\not=\C_B$.

In the first case, there exists an index $\ell$ such that $(r_A)_{\ell}=-(r_B)_{\ell}$. Moreover, up to translate on the torus the cells of the Heffter arrays $A$ and $B$, we can assume, without loss of generality that $\ell=\bar{i}=1$.

Here we set by $\omega_r^A,\omega_c^A$ the orderings induced by $(\R_A,\C_A)$ on the elements of $\E(A)$ and by $\omega_r^B,\omega_c^B$ the orderings induced by $(\R_B,\C_B)$ on the elements of $\E(B)$.
We also denote the non-empty elements of the first row of $A$, following the natural ordering, by $(a_{1,1},a_{1,i_2},\dots, a_{1,i_k})$.
Then, since $(r_A)_1=-(r_B)_1$ and since, due to Proposition \ref{differentEmbeddings1}, $\omega_r^A=\omega_r^B$, we have that the non-empty elements of the first row of $B$ are, following the natural ordering, $(a_{1,1},a_{1,i_k},\dots, a_{1,i_2})$ where $a_{1,i_k}$ is in the $i_2$-th column and $a_{1,i_2}$ is in the $i_k$-th column.
Now we consider the element $a_{i_2,i_2}$ in position $(i_2,i_2)$ of $A$. Since, in the diagonal $D_1$, the arrays $A$ and $B$ coincide, we have that $a_{i_2,i_2}$ is also the element in position $(i_2,i_2)$ of $B$.
Here we note that, in the array $A$ the elements $a_{i_2,i_2}$ and $a_{1,i_2}$ belong both to the $i_2$-th column. On the other hand, in the array $B$ they belong to different columns: $a_{i_2,i_2}$ is in the $i_2$-th and $a_{1,i_2}$ is in the $i_k$-th. But this implies that the orbits of $a_{i_2,i_2}$ under the action of $\omega_c^A$ and $\omega_c^B$ are different and hence, due to Proposition \ref{differentEmbeddings1}, we would obtain the contradiction that $(\R_A,\C_A)$ and $(\R_B,\C_B)$  determine different biembeddings.

We obtain a similar contradiction also in the case $\C_A\not=\C_B$ and hence we have proved that $(\R_A,\C_A)=(\R_B,\C_B)$.

It is left to prove that $A=B$.
At this purpose we suppose, by contradiction, that there is a position $(\ell_1,\ell_2)$ where $A$ and $B$ are different and we consider the element $a$ of $D_{\bar{i}}$ that belongs to the $\ell_1$-th row. Due to Proposition \ref{differentEmbeddings1} we have that $\omega^A_r(a)=\omega^B_r(a)$ and, inductively, that $(\omega^A_r)^j(a)=(\omega^B_r)^j(a)$ for any $j\in [1,k]$.
Since $skel(A)=skel(B)$ and $\R_A=\R_B$, it follows that the $\ell_1$-th row of $A$ and that of $B$ are equal. But this would imply that also the elements in position $(\ell_1,\ell_2)$ of $A$ and $B$ coincide that contradicts our hypothesis. It follows that $A=B$.
\endproof

\section{On the maximum number of isomorphic embeddings}

Given an embedding $\Pi$, we will denote by $Aut(\Pi)$ the group of all automorphisms of $\Pi$ and by $Aut^+(\Pi)$ the group of the orientation preserving automorphisms. Similarly, we will denote by $Aut_0(\Pi)$ the subgroup of $Aut(\Pi)$ of the automorphisms that fix $0$ and by $Aut_0^+(\Pi)$ the group of the orientation preserving automorphisms that fix $0$. We remark that, since an orientable surface admits exactly two orientations, $Aut^+(\Pi)$ (resp. $Aut_0^+(\Pi)$) is a normal subgroup of $Aut(\Pi)$ (resp. $Aut_0(\Pi)$) whose index is either $1$ or $2$.
In the following, when we consider a $\mathbb{Z}_v$-regular embedding $\Pi$ of $\Gamma$, we identify the vertex set of $\Gamma$ with $\mathbb{Z}_v$ and we assume that the translation action is regular. We denote by $\tau_g$ the translation by $g$, i.e. the map $V(\Gamma)=\mathbb{Z}_v\rightarrow V(\Gamma)=\mathbb{Z}_v$ such that $\tau_g(x)=x+g$. Applying this convention, we have that $\tau_g\in Aut(\Pi)$ for any $g\in \mathbb{Z}_v$. Moreover, in the case of the Archdeacon embedding, recalling equation (\ref{ArchRho}), the translations also belong to $Aut^+(\Pi)$.

\begin{rem}
Let $\Pi$ and $\Pi'$ be two isomorphic $\mathbb{Z}_v$-regular embeddings of $K_{m\times t}$, where $v=mt$. Given an embedding isomorphism $\sigma: \Pi\rightarrow \Pi'$ and $g\in \mathbb{Z}_v$, we define
$$\phi_{\sigma,g}:=\sigma\circ \tau_g^{-1}\circ \sigma^{-1}\circ \tau_{\sigma(g)}.$$
Moreover, if $\sigma(0)=0$ then, since $\phi_{\sigma,g}(0)=0$, we obtain that:
$$\phi_{\sigma,g} \in Aut_0(\Pi').$$
\end{rem}

\begin{prop}\label{EqualityEmb}
Let $\Pi_0, \Pi_1$ and $\Pi_2$ be $\mathbb{Z}_v$-regular embeddings of $K_{m\times t}$, where $v=mt$.
Let us suppose there exist two embedding isomorphisms $\sigma_1:\ \Pi_1\rightarrow \Pi_0$ and $\sigma_2:\ \Pi_2\rightarrow \Pi_0$ such that, considering $\sigma_1$ and $\sigma_2$ as maps from $\mathbb{Z}_v$ to $\mathbb{Z}_v$, the following properties hold:
\begin{itemize}
\item[(1)] $\sigma_1(0)=\sigma_2(0)=0$;
\item[(2)] $\sigma_1(1)=\sigma_2(1)$;
\item[(3)] $\phi_{\sigma_1,1}=\phi_{\sigma_2,1}$.
\end{itemize}
Then the identity map from $\Pi_1$ to $\Pi_2$ is an isomorphism.
\end{prop}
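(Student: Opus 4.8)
The plan is to show that the two combinatorial embeddings $\Pi_1$ and $\Pi_2$ of $K_{m\times t}$ have literally the same rotation system $\rho_1=\rho_2$, which is exactly what it means for the identity map $V(\Gamma)=\mathbb{Z}_v\to V(\Gamma)=\mathbb{Z}_v$ to be an embedding isomorphism $\Pi_1\to\Pi_2$. To do this I would first translate hypothesis (3) into a useful form. Writing $\phi_i:=\phi_{\sigma_i,1}=\sigma_i\circ\tau_1^{-1}\circ\sigma_i^{-1}\circ\tau_{\sigma_i(1)}$, the equality $\phi_1=\phi_2$ together with $\sigma_1(1)=\sigma_2(1)$ from (2) gives $\sigma_1\circ\tau_1^{-1}\circ\sigma_1^{-1}=\sigma_2\circ\tau_1^{-1}\circ\sigma_2^{-1}$, hence $(\sigma_2^{-1}\circ\sigma_1)\circ\tau_1^{-1}=\tau_1^{-1}\circ(\sigma_2^{-1}\circ\sigma_1)$; in other words $\mu:=\sigma_2^{-1}\circ\sigma_1$ commutes with the translation $\tau_1$, and by (1) it fixes $0$. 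Since $\tau_1$ generates the full regular translation group of $\mathbb{Z}_v$, a bijection of $\mathbb{Z}_v$ commuting with $\tau_1$ and fixing $0$ must be the identity on all of $\mathbb{Z}_v$: indeed $\mu(x)=\mu(\tau_1^x(0))=\tau_1^x(\mu(0))=\tau_1^x(0)=x$. Therefore $\sigma_1=\sigma_2$ as maps $\mathbb{Z}_v\to\mathbb{Z}_v$; call this common map $\sigma$.

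Next I would use that $\sigma$ is simultaneously an embedding isomorphism $\Pi_1\to\Pi_0$ and $\Pi_2\to\Pi_0$. By Definition \ref{DefEmbeddings}, for every oriented edge $(x,y)\in D(\Gamma)$ we have $\sigma\circ\rho_1(x,y)=(\rho_0)^{\pm1}\circ\sigma(x,y)$ and likewise $\sigma\circ\rho_2(x,y)=(\rho_0)^{\pm1}\circ\sigma(x,y)$, where the signs are global (the same for all edges, since the index of $Aut^+$ is at most $2$ and the choice of sign is determined once and for all by whether $\sigma$ is orientation preserving or reversing). The one subtlety is that \emph{a priori} $\sigma$ could be orientation preserving as a map $\Pi_1\to\Pi_0$ but orientation reversing as a map $\Pi_2\to\Pi_0$, or vice versa. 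I would rule this out using hypothesis (3) — or rather, I would observe that if the two signs disagreed, then composing gives $\sigma\circ\rho_1=\sigma\circ\rho_2^{-1}$, i.e. $\rho_1=\rho_2^{-1}$; but then the identity map would be an orientation-reversing isomorphism $\Pi_1\to\Pi_2$, and in fact one still concludes that the identity is an embedding isomorphism, which is all that is claimed. So in either case $\sigma\circ\rho_1(x,y)=\sigma\circ\rho_2^{\varepsilon}(x,y)$ for a fixed sign $\varepsilon\in\{+1,-1\}$ and all $(x,y)$, and since $\sigma$ is a bijection we get $\rho_1=\rho_2^{\varepsilon}$. This exhibits the identity of $\mathbb{Z}_v$ as a graph automorphism satisfying (\ref{eq11}) (if $\varepsilon=+1$) or (\ref{eq12}) (if $\varepsilon=-1$), hence an embedding isomorphism $\Pi_1\to\Pi_2$.

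The main obstacle I anticipate is precisely the bookkeeping of orientations in the middle step: one must be careful that ``$\sigma$ is an isomorphism $\Pi_i\to\Pi_0$'' means the relation holds with a single global sign, and then argue that whatever happens with the two (possibly different) global signs, one still lands on one of the two defining relations of Definition \ref{DefEmbeddings} for the pair $(\Pi_1,\Pi_2)$. Everything else is the short group-theoretic observation that a permutation of $\mathbb{Z}_v$ fixing $0$ and commuting with a generator of the regular cyclic action is forced to be the identity; this is where hypotheses (1), (2), (3) get consumed, and it is routine. I would present the argument in the order above: first extract $\mu=\sigma_2^{-1}\sigma_1=\mathrm{id}$ from (1)–(3), then read off $\rho_1=\rho_2^{\pm1}$ from the two isomorphism relations into $\Pi_0$, and conclude.
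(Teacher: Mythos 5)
Your argument is correct and follows essentially the same route as the paper: both proofs combine hypotheses (2) and (3) to show that $\sigma_{1,2}=\sigma_2^{-1}\circ\sigma_1$ commutes with $\tau_1$, then use (1) and induction along the $\tau_1$-orbit to force $\sigma_{1,2}=\mathrm{id}$. The only difference is that you spell out the final step (that $\sigma_1=\sigma_2$ makes the identity an embedding isomorphism $\Pi_1\to\Pi_2$, with the bookkeeping of the two possible orientation signs), which the paper leaves implicit as the standard fact that a composition of embedding isomorphisms is again one.
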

\begin{proof}
We note that, due to hypothesis $(3)$, we have that:
$$\phi_{\sigma_1,1}=\sigma_1\circ \tau_1^{-1}\circ \sigma_1^{-1}\circ \tau_{\sigma_1(1)}=\sigma_2\circ \tau_1^{-1}\circ \sigma_2^{-1}\circ \tau_{\sigma_2(1)}=\phi_{\sigma_2,1}.$$
Since, because of hypothesis $(2)$, $\sigma_1(1)=\sigma_2(1)$ the maps $\tau_{\sigma_1(1)}$ and $\tau_{\sigma_2(1)}$ coincide. Reducing these maps from the composition, we obtain that:
\begin{equation}\label{eq7}
\sigma_1\circ \tau_1^{-1}\circ \sigma_1^{-1}=\sigma_2\circ \tau_1^{-1}\circ \sigma_2^{-1}.
\end{equation}
Note that equation (\refeq{eq7}) can be rewritten as:
$$(\sigma_2^{-1}\circ \sigma_1)\circ \tau_1^{-1}= \tau_1^{-1}\circ (\sigma_2^{-1}\circ \sigma_1),$$
hence we have that:
\begin{equation}\label{eq1}
\tau_1\circ (\sigma_2^{-1}\circ \sigma_1)=(\sigma_2^{-1}\circ \sigma_1)\circ \tau_1.
\end{equation}
Setting $\sigma_{1,2}:=\sigma_2^{-1}\circ \sigma_1$, by definition of $\tau_1$, it results
$$\tau_1\circ \sigma_{1,2}(x)=\sigma_{1,2}(x)+1,$$
and
$$\sigma_{1,2}\circ \tau_1(x)=\sigma_{1,2}(x+1). $$
Therefore, equation (\refeq{eq1}) can be written as:
$$\sigma_{1,2}(x+1)=\sigma_{1,2}(x)+1.$$
Since, for hypothesis (1), $\sigma_1(0)=\sigma_2(0)=0$ we can prove, inductively, that $\sigma_{1,2}(x)=x$ that is $\sigma_{1,2}=id$.
It follows that the identity map from $\Pi_1$ to $\Pi_2$ is an isomorphism of embeddings.
\end{proof}

\begin{prop}\label{CarAut}
Let $\Pi$ be an embedding of $K_{m\times t}$ where $m\geq 2$. Then we have that:
$$|Aut_0(\Pi)|\leq 2|Aut_0^+(\Pi)|\leq 2|N(K_{m\times t},0)|=2(m-1)t.$$
\end{prop}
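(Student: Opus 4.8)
The plan is to bound $|Aut_0(\Pi)|$ by analyzing how an automorphism fixing the vertex $0$ can act, using the fact that a combinatorial embedding automorphism must respect the rotation $\rho$ up to inversion (Definition \ref{DefEmbeddings}). First I would invoke the elementary observation, recorded just before the statement, that $Aut_0^+(\Pi)$ has index $1$ or $2$ in $Aut_0(\Pi)$, since an orientable surface has exactly two orientations; this gives the first inequality $|Aut_0(\Pi)|\leq 2|Aut_0^+(\Pi)|$ for free. So the real content is the bound $|Aut_0^+(\Pi)|\leq |N(K_{m\times t},0)|=(m-1)t$.

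For that, I would take $\sigma\in Aut_0^+(\Pi)$, so $\sigma$ is a graph automorphism of $K_{m\times t}$ fixing $0$ and satisfying $\sigma\circ\rho=\rho\circ\sigma$ on all of $D(\Gamma)$. The key step is to show that $\sigma$ is completely determined by the single value $\sigma(y)$ for one fixed neighbour $y\in N(K_{m\times t},0)$. Indeed, restricting the relation $\sigma\circ\rho=\rho\circ\sigma$ to oriented edges out of $0$ and using $\sigma(0)=0$, one gets $\sigma(\rho_0(y))=\rho_{0}(\sigma(y))$ where $\rho_0:=\rho_{\sigma(0)}$ is really $\rho_0$ as $\sigma$ fixes $0$; hence $\sigma$ on $N(K_{m\times t},0)$ is forced on the whole cyclic orbit of $y$ under $\rho_0$, and since $\rho_0$ is a single cycle on $N(K_{m\times t},0)$ this pins down $\sigma$ on every neighbour of $0$. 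Then, because $K_{m\times t}$ is connected and vertex-transitive (and more to the point, because the rotation system together with the values of $\sigma$ on $N(0)$ propagates along every edge by the compatibility $\sigma\rho=\rho\sigma$), one shows inductively along a spanning structure that $\sigma$ is determined on all of $\mathbb{Z}_v$. Therefore the map $Aut_0^+(\Pi)\to N(K_{m\times t},0)$, $\sigma\mapsto\sigma(y)$, is injective, giving $|Aut_0^+(\Pi)|\leq |N(K_{m\times t},0)|=(m-1)t$.

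Finally I would note that $|N(K_{m\times t},0)|=(m-1)t$ simply because in the complete multipartite graph with $m$ parts of size $t$ the vertex $0$ is adjacent to every vertex outside its own part, i.e. to $v-t=(m-1)t$ vertices, and this uses the hypothesis $m\geq 2$ to ensure the neighbourhood is non-empty. Combining the three inequalities yields the chain $|Aut_0(\Pi)|\leq 2|Aut_0^+(\Pi)|\leq 2|N(K_{m\times t},0)|=2(m-1)t$.

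The main obstacle I anticipate is making the propagation argument airtight: one must check carefully that knowing $\sigma$ on $N(0)$, plus the relation $\sigma\circ\rho=\rho\circ\sigma$ (and the fact that $\sigma$ is a graph isomorphism, so it maps $N(x)$ to $N(\sigma(x))$ bijectively), really forces $\sigma$ on $N(x)$ for every $x$ adjacent to $0$, and then iterates over the connected graph without circular reasoning. The cleanest way is probably to observe that the oriented edges $D(\Gamma)$ under $\rho$ together with the edge-reversal involution act transitively (the embedding is cellular and $\Gamma$ connected), so that a single value $\sigma(x,y)$ determines $\sigma$ on all oriented edges, hence on all vertices; injectivity of $\sigma\mapsto\sigma(y)$ on $Aut_0^+(\Pi)$ follows immediately.
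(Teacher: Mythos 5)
Your proposal is correct and its core is the same as the paper's: both reduce to bounding $|Aut_0^+(\Pi)|$ via the index-at-most-two remark, and both obtain the bound $(m-1)t$ by observing that the relation $\sigma\circ\rho(0,x)=\rho\circ\sigma(0,x)$ forces $\sigma|_{N(K_{m\times t},0)}$ to be a power of the cycle $\rho_0$, hence determined by the single value $\sigma(y)$. Where you diverge is in the step you correctly flag as the delicate one, namely showing that agreement on $N(K_{m\times t},0)$ (equivalently, on one oriented edge out of $0$) forces agreement everywhere. The paper does this with a short, graph-specific trick: it runs the same ``determined on a neighbourhood'' argument a second time at a vertex $x\in N(K_{m\times t},0)$, using that $0\in N(K_{m\times t},x)$ is fixed, and then concludes because $V(K_{m\times t})=N(K_{m\times t},0)\cup N(K_{m\times t},x)$ when $m\geq 2$ --- so only two rounds of propagation are ever needed. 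Your route instead invokes the general fact that $\rho$ together with edge reversal acts transitively on $D(\Gamma)$ for any connected graph, and that an orientation-preserving automorphism commutes with both, so the set of fixed oriented edges is invariant and hence all of $D(\Gamma)$ once it is non-empty. This is valid and strictly more general (it would bound $Aut_0^+$ by the degree of $0$ for any connected vertex in any graph), at the cost of having to actually prove the transitivity claim (which follows from $\rho_x$ being a full cycle on each $N(\Gamma,x)$ plus connectivity, not from cellularity per se); the paper's version avoids that by exploiting that $K_{m\times t}$ has ``diameter two through any edge.'' Either finish is acceptable.
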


\begin{proof}
Since $Aut^+_0(\Pi)$ is a normal subgroup of $Aut_0(\Pi)$ whose index is at most two, it suffices to prove that $|Aut_0^+(\Pi)|\leq |N(K_{m\times t},0)|.$ Because of the definition, $\sigma \in Aut_0^+(\Pi)$ implies that, for any $x\not=0$:
$$\sigma\circ \rho(0,x)=\rho\circ \sigma(0,x).$$
Recalling that $\rho(0,x)=(0,\rho_0(x))$ for a suitable map $\rho_0: N(K_{m\times t},0)\rightarrow N(K_{m\times t},0)$, we have that:
\begin{equation}\label{eq2}\sigma\circ \rho(0,x)=(0,\sigma\circ \rho_0(x))=(0,\rho_0\circ \sigma(x))=\rho\circ \sigma(0,x).\end{equation}
Since $|N(K_{m\times t},0)|=(m-1)t$, we can write $\rho_0$ as the cycle $(x_1=1,x_2,x_3,\dots,x_{(m-1)t})$.
Then, setting $\sigma(x_1)=x_i$, equation (\refeq{eq2}) implies that:
$$(0,\sigma(x_2))=(0,\rho_0\circ \sigma(x_1))=\rho\circ \sigma(0,x_1)=(0,x_{i+1}).$$
Therefore, we can prove, inductively, that:
$$\sigma(x_j)=x_{j+i-1}$$
where the indices are considered modulo $(m-1)t$.
This means that $\sigma|_{N(K_{m\times t},0)}=\rho_0^{i-1}$ and that $\sigma$ is fixed in $N(K_{m\times t},0)$ when the image of one element is given.
In particular since $\rho_0$ has order $(m-1)t$, there are at most $|N(K_{m\times t},0)|$ possibilities for the map $\sigma|_{N(K_{m\times t},0)}$.

Now we need to prove that, if two automorphisms $\sigma_1$ and $\sigma_2$ of $Aut_0(\Pi)$ coincide in $N(K_{m\times t},0)$, they coincide everywhere. Set $\sigma_{1,2}=\sigma_2^{-1}\circ \sigma_1$, this is equivalently to prove that $\sigma_{1,2}$ is the identity. Given $x\in N(K_{m\times t},0)$ we have that $\sigma_{1,2}(x)=x$ and hence $\sigma_{1,2}$ belongs to the subgroup $Aut_x^+(\Pi)$ of $Aut^+(\Pi)$ of the elements that fix $x$. Proceeding as before we prove that $\sigma_{1,2}|_{N(K_{m\times t},x)}$ is fixed when the image of one element is given.
But now we note that $0\in N(K_{m\times t},x)$ and we have that $\sigma_{1,2}(0)=0$. It follows that
$$\sigma_{1,2}|_{N(K_{m\times t},x)}=id.$$
Since $\sigma_1$ and $\sigma_2$ coincide in $N(K_{m\times t},0)$, we also have that
$$\sigma_{1,2}|_{N(K_{m\times t},0)}=id.$$
Now the thesis follows because, for $m\geq 2$,
$$V(K_{m\times t})=N(K_{m\times t},0)\cup N(K_{m\times t},x).$$
\end{proof}

\begin{prop}\label{upperboundFamily}
Let $\mathcal{F}=\{\Pi_{\alpha}: \alpha \in \mathcal{A}\}$ be a family of $\mathbb{Z}_v$-regular distinct embeddings of $K_{m\times t}$ where $v=mt$ and  $m\geq 2$. Then, if $\Pi_{\alpha}$ is isomorphic to $\Pi_0$ for any $\alpha \in \mathcal{A}$, we have that:
$$|\mathcal{F}|\leq 2|Aut_0(\Pi_0)|\cdot |N(K_{m\times t},0)|\leq 4|N(K_{m\times t},0)|^2=4((m-1)t)^2.$$
Moreover, if for any $\alpha\in \mathcal{A}$ and any $g\in \mathbb{Z}_v$, the translation $\tau_g$ belongs to $Aut^+(\Pi_{\alpha})$,
then:
$$|\mathcal{F}|\leq 2|Aut_0^+(\Pi_0)|\cdot |N(K_{m\times t},0)|\leq 2|N(K_{m\times t},0)|^2=2((m-1)t)^2.$$
\end{prop}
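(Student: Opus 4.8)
The plan is to bound the number of elements of $\mathcal{F}$ that can be isomorphic to the fixed embedding $\Pi_0$ by parametrizing each such isomorphism by a small amount of combinatorial data. Fix for each $\alpha\in\mathcal{A}$ an embedding isomorphism $\sigma_\alpha\colon\Pi_\alpha\to\Pi_0$. The first step is to reduce to isomorphisms that fix $0$: since each $\Pi_\alpha$ is $\mathbb{Z}_v$-regular, we may postcompose $\sigma_\alpha$ with the translation $\tau_{-\sigma_\alpha(0)}^{-1}=\tau_{\sigma_\alpha(0)}^{-1}\in Aut(\Pi_0)$ (translations are automorphisms of $\Pi_0$ by the convention adopted before the statement), so that without loss of generality $\sigma_\alpha(0)=0$ for all $\alpha$. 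Then I would attach to each $\alpha$ the pair of data $\bigl(\sigma_\alpha(1),\ \phi_{\sigma_\alpha,1}\bigr)$, where $\phi_{\sigma,g}=\sigma\circ\tau_g^{-1}\circ\sigma^{-1}\circ\tau_{\sigma(g)}$ as in the Remark preceding Proposition~\ref{EqualityEmb}; recall from that Remark that $\phi_{\sigma_\alpha,1}\in Aut_0(\Pi_0)$.

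The key step is that this data determines $\Pi_\alpha$ up to equality (not just isomorphism): if $\alpha,\beta\in\mathcal{A}$ give the same pair, then $\sigma_\alpha,\sigma_\beta$ satisfy hypotheses (1), (2), (3) of Proposition~\ref{EqualityEmb} with $\Pi_0$ in the role of the common target, hence the identity map $\Pi_\alpha\to\Pi_\beta$ is an isomorphism; since the embeddings in $\mathcal{F}$ are distinct $\mathbb{Z}_v$-regular embeddings on the same vertex set $\mathbb{Z}_v$, being related by the identity forces $\Pi_\alpha=\Pi_\beta$, i.e. $\alpha=\beta$. Therefore the map $\alpha\mapsto\bigl(\sigma_\alpha(1),\phi_{\sigma_\alpha,1}\bigr)$ is injective on $\mathcal{A}$, and so $|\mathcal{F}|\le (\text{number of choices for }\sigma_\alpha(1))\cdot|Aut_0(\Pi_0)|$. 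Finally I bound the first factor: $\sigma_\alpha$ is a graph isomorphism $K_{m\times t}\to K_{m\times t}$ fixing $0$, so it maps $N(K_{m\times t},0)$ to itself, and $1\in N(K_{m\times t},0)$ gives $\sigma_\alpha(1)\in N(K_{m\times t},0)$, so there are at most $|N(K_{m\times t},0)|$ choices. Combining this with Proposition~\ref{CarAut}, which gives $|Aut_0(\Pi_0)|\le 2|N(K_{m\times t},0)|=2(m-1)t$, yields
$$|\mathcal{F}|\le |N(K_{m\times t},0)|\cdot|Aut_0(\Pi_0)|\le 2|N(K_{m\times t},0)|^2=2((m-1)t)^2.$$
Wait --- the claimed constant in the first inequality is $2|Aut_0(\Pi_0)|\cdot|N(K_{m\times t},0)|$, so one extra factor of $2$ is carried to absorb the possibility that, after the normalization $\sigma_\alpha(0)=0$, one still has a choice of orientation; more carefully, the reduction above should be stated as $|\mathcal{F}|\le 2|Aut_0(\Pi_0)|\cdot|N(K_{m\times t},0)|\le 4|N(K_{m\times t},0)|^2$, the first factor of $2$ accounting for composing with an orientation-reversing automorphism of $\Pi_0$ fixing $0$ when one insists on a canonical choice of $\sigma_\alpha$.

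For the second (sharper) bound, assume in addition that $\tau_g\in Aut^+(\Pi_\alpha)$ for all $\alpha,g$. The point is that now every embedding isomorphism between two members of the family is automatically orientation preserving, or can be chosen so: conjugating $\tau_1\in Aut^+(\Pi_\alpha)$ by $\sigma_\alpha$ produces a nontrivial translation-like element of $Aut^+(\Pi_0)$, forcing $\sigma_\alpha\in Aut^+$-compatible position, so that in the counting argument $\phi_{\sigma_\alpha,1}$ ranges only over $Aut_0^+(\Pi_0)$ rather than all of $Aut_0(\Pi_0)$, and the spurious orientation factor of $2$ in the reduction is likewise eliminated. This replaces $|Aut_0(\Pi_0)|$ by $|Aut_0^+(\Pi_0)|$, giving $|\mathcal{F}|\le 2|Aut_0^+(\Pi_0)|\cdot|N(K_{m\times t},0)|\le 2|N(K_{m\times t},0)|^2=2((m-1)t)^2$ by the middle inequality of Proposition~\ref{CarAut}.

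The main obstacle I anticipate is bookkeeping the orientation factors of $2$ cleanly: one must decide at which stage the choice "orientation preserving vs.\ reversing" is absorbed --- in the normalization of $\sigma_\alpha$, in the range of $\phi_{\sigma_\alpha,1}$, or split between the two --- and make sure the final constants match the statement ($4((m-1)t)^2$ in general, $2((m-1)t)^2$ under the translation hypothesis) without double counting. Everything else (the injectivity of $\alpha\mapsto(\sigma_\alpha(1),\phi_{\sigma_\alpha,1})$ via Proposition~\ref{EqualityEmb}, the neighborhood bound on $\sigma_\alpha(1)$, and the input from Proposition~\ref{CarAut}) is routine.
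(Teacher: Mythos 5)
Your overall strategy is the same as the paper's: normalize $\sigma_\alpha$ so that $\sigma_\alpha(0)=0$, parametrize $\alpha$ by the pair $(\sigma_\alpha(1),\phi_{\sigma_\alpha,1})\in N(K_{m\times t},0)\times Aut_0(\Pi_0)$, and invoke Propositions \ref{EqualityEmb} and \ref{CarAut}. However, there is a genuine gap in your key step: the map $\alpha\mapsto(\sigma_\alpha(1),\phi_{\sigma_\alpha,1})$ is \emph{not} injective, and your assertion that ``being related by the identity forces $\Pi_\alpha=\Pi_\beta$'' is false. By Definition \ref{DefEmbeddings} an embedding isomorphism only has to intertwine $\rho_\alpha$ with $\rho_\beta$ \emph{or} with $\rho_\beta^{-1}$; in particular the identity on $\mathbb{Z}_v$ is an (orientation reversing) isomorphism between the two \emph{distinct} combinatorial embeddings $(\Gamma,\rho)$ and $(\Gamma,\rho^{-1})$. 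So two distinct members of $\mathcal{F}$ can perfectly well share the same pair, and this is exactly where the leading factor $2$ in the statement comes from. Your attempt to recover that factor by appealing to ``a canonical choice of $\sigma_\alpha$'' or to composing with an orientation-reversing automorphism of $\Pi_0$ is not a correct accounting: post-composing $\sigma_\alpha$ with an element of $Aut_0(\Pi_0)$ merely changes which pair you record, it does not enlarge the target set. The correct repair (and the paper's argument, phrased as a pigeonhole on \emph{three} embeddings) is that each fiber of the map has size at most $2$: if $\Pi_1,\Pi_2,\Pi_3$ all gave the same pair, Proposition \ref{EqualityEmb} together with Definition \ref{DefEmbeddings} would force $\rho_2,\rho_3\in\{\rho_1,\rho_1^{-1}\}$, whence two of the three coincide, contradicting distinctness. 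Hence $|\mathcal{F}|\leq 2\,|Aut_0(\Pi_0)|\cdot|N(K_{m\times t},0)|$.

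The same confusion propagates to the second bound: the factor $2$ is \emph{not} eliminated there (the stated bound is $2|Aut_0^+(\Pi_0)|\cdot|N(K_{m\times t},0)|$). The only gain under the translation hypothesis is that $\phi_{\sigma_\alpha,1}=\sigma_\alpha\circ\tau_1^{-1}\circ\sigma_\alpha^{-1}\circ\tau_{\sigma_\alpha(1)}$ automatically lies in $Aut_0^+(\Pi_0)$, because conjugating the orientation-preserving $\tau_1^{-1}$ by $\sigma_\alpha$ is orientation preserving whether $\sigma_\alpha$ preserves or reverses orientation, and $\tau_{\sigma_\alpha(1)}\in Aut^+(\Pi_0)$; the improvement in the final constant then comes solely from $|Aut_0^+(\Pi_0)|\leq|N(K_{m\times t},0)|$ versus $|Aut_0(\Pi_0)|\leq 2|N(K_{m\times t},0)|$ in Proposition \ref{CarAut}. (Your shortcut asserting $1\in N(K_{m\times t},0)$ is harmless for $m\geq 2$, though the paper handles the dichotomy via $\max((m-1)t,\,t-1)=(m-1)t$.)
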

\begin{proof}
We can assume $\Pi_0\in \mathcal{F}$ and let us denote by $\sigma_\alpha$ an isomorphism between $\Pi_\alpha$ and $\Pi_0$ that fixes $0$. Note that this isomorphism exists since $\mathcal{F}$ is a family of $\mathbb{Z}_v$-regular embeddings.
Let us assume, by contradiction that
$$|\mathcal{F}|> 2|Aut_0(\Pi_0)|\cdot|N(K_{m\times t},0)|.$$
We note that, for any $\alpha \in \mathcal{A}$, $\phi_{\sigma_{\alpha},1}\in Aut_0(\Pi_0)$. Since $\sigma_{\alpha}$ is an isomorphism that fixes $0$, $\sigma_{\alpha}(1)$ belongs to $N(K_{m\times t},0)$ if and only if $1$ belongs to $N(K_{m\times t},0)$. It follows that, we have at most
$$\max(|N(K_{m\times t},0)|,v-1-|N(K_{m\times t},0)|)=\max((m-1)t,t-1)=(m-1)t$$
possibilities for $\sigma_{\alpha}(1)$.
Therefore, due to the pigeonhole principle, we would have that there exist $\Pi_1$, $\Pi_2$ and $\Pi_3$ in $\mathcal{F}$ such that:
\begin{itemize}
\item[(1)] $\sigma_1(1)=\sigma_2(1)=\sigma_3(1)$;
\item[(2)] $\phi_{\sigma_1,1}=\phi_{\sigma_2,1}=\phi_{\sigma_3,1}$.
\end{itemize}
Hence, due to Proposition \ref{EqualityEmb}, we would have that the identity is an isomorphism both from $\Pi_1=(\Gamma_1,\rho_1)$ to $\Pi_2=(\Gamma_2,\rho_2)$ and from $\Pi_1=(\Gamma_1,\rho_1)$ to $\Pi_3=(\Gamma_3,\rho_3)$. It follows from Definition \ref{DefEmbeddings} that $\Gamma_1=\Gamma_2=\Gamma_3$ and $\rho_2, \rho_3\in \{\rho_1, \rho_1^{-1}\}$. But this means that either $\Pi_1=\Pi_2$ or $\Pi_1=\Pi_3$ or $\Pi_2=\Pi_3$. In each of these cases we would obtain that the elements of $\mathcal{F}$ are not all distinct that contradicts the hypotheses.

We remark that, in case the translations are all elements of $Aut^+(\Pi_{\alpha})$ (for every $\alpha\in \mathcal{A}$), $\phi_{\sigma_{\alpha},1}$ would be an element of $ Aut_0^+(\Pi_0)$ and hence we can substitute $Aut_0(\Pi_0)$ with $Aut_0^+(\Pi_0)$ in the previous argument. This leads us to obtain:
$$|\mathcal{F}|\leq 2|Aut_0^+(\Pi_0)|\cdot |N(K_{m\times t},0)|.$$
\end{proof}

\begin{rem}
Clearly if $t=1$ the complete multipartite graph $K_{m \times t}$ is nothing but the complete graph of order $m$.
Hence the results of Propositions \ref{CarAut} and \ref{upperboundFamily} hold also for the complete graph.
\end{rem}

\section{Embeddings from Cavenagh, Donovan and Yaz\i c\i's arrays}
We consider now the family of embeddings of $K_v$ obtained by Cavenagh, Donovan, and Yaz\i c\i \ in \cite{CDDYbiem}. In their constructions, all the face boundaries are cycles of length $k$.

Set the binary entropy function by $H(p):=-p\log_2{p}-(1-p)\log_2(1-p)$ and denoted by $\mathcal{H}(m)$ the cardinality of the derangements on $[0,m-1]$, we will use the following, well known, approximations:
\begin{equation}\label{Stirling}
m!\approx \sqrt{2m\pi}\left(\frac{m}{e}\right)^m,
\end{equation}
\begin{equation}\label{Entropy}
{m\choose pm }\approx \frac{1}{\sqrt{2m\pi(1-p)p}}2^{mH(p)},
\end{equation}
\begin{equation}\label{Derangements}
\mathcal{H}(m)\approx m!/e,
\end{equation}
where the symbol $\approx$ means that the two quantities are asymptotic: their ratio tends to $1$ as $m$ tends to infinity. We will also use the simbol $\gtrsim$ in case the $\liminf$ of the ratio between two quantities, as $m$ tends to infinity, is greater than or equal to $1$.
\begin{thm}[Cavenagh, Donovan and Yaz\i c\i \cite{CDDYbiem}]\label{CDY}
Let $v=2nk+1$, $k=4t+3$ and let $n\equiv 1 \pmod{4}$ be either a prime or $n\geq (7k+1)/3$. Moreover, if $n\equiv 0\pmod{3}$, we also assume that $k\equiv 7\pmod{12}$. Then, the number of distinct simple $k$-gonal biembeddings of $K_v$ is, at least, of:
$$(n-2)[\mathcal{H}(t-2)]^2\approx (n-2)[(t-2)!/e]^2.$$
Also, for all such embeddings and all $g\in \mathbb{Z}_v$, $\tau_g$ is an orientation preserving automorphism.
\end{thm}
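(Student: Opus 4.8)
The plan is to read the statement off the construction of~\cite{CDDYbiem} after translating it into the language of Section~2. First I would recall that, for $v=2nk+1$, $k=4t+3$ and $n$ in the stated range, Cavenagh, Donovan and Yaz\i c\i\ produce a globally simple, $k$-diagonal Heffter array $\H(n;k)$, and in fact a family of $(n-2)[\mathcal{H}(t-2)]^2$ such arrays, pairwise distinct either as arrays or as array--solution pairs, all with the same skeleton, the same element list $\E$, and a common non-empty diagonal $D_{\bar{i}}$ on which they coincide, each equipped with a solution $(\R,\C)$ of the Crazy Knight's Tour Problem $P(\cdot)$ coming from the natural orderings. The parameters are one discrete choice with $n-2$ values together with two independent derangements of $t-2$ objects, reflecting the reordering freedom in the rows and in the columns of their array that keeps global simplicity and the Heffter property intact. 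Applying Corollary~\ref{preprecedente} to each member produces a \emph{simple} biembedding of $K_v=K_{2nk+1}$ in which every edge lies on two faces of length $k$ (here $h=k$ because the array is square), so all face boundaries are $k$-cycles, as claimed.

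Next I would show that distinct members of the family give distinct biembeddings, so that there are at least $(n-2)[\mathcal{H}(t-2)]^2$ of them, matching the asymptotics via~(\ref{Derangements}). For two members sharing the same underlying array this is the content of Remark~\ref{NOlambda} and of the argument behind Proposition~\ref{differentEmbeddings1}: by~(\ref{ArchEmb})--(\ref{ArchRho}) changing the solution changes $\rho_0$, hence the rotation $\rho$, hence the biembedding. For two members with different underlying arrays the hypotheses of Corollary~\ref{differentEmbeddings2} are met (both are $k$-diagonal globally simple $\H(n;k)$, they coincide on the non-empty diagonal $D_{\bar{i}}$, they have the same $\E$ and the same skeleton, and each admits a solution of $P$), so they give the same $k$-gonal biembedding only if the arrays and the solutions coincide. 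Since the $(n-2)[\mathcal{H}(t-2)]^2$ data are pairwise distinct, so are the biembeddings.

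Finally, for the added assertion on translations: each biembedding $\Pi$ in the family is an Archdeacon biembedding, so its rotation is the map~(\ref{ArchRho}), $\rho(x,x+a)=(x,x+\rho_0(a))$. For any $g\in\Z_v$ the translation $\tau_g$ sends the oriented edge $(x,x+a)$ to $(x+g,x+g+a)$, and $\rho(\tau_g(x,x+a))=(x+g,x+g+\rho_0(a))=\tau_g(\rho(x,x+a))$, so $\tau_g\circ\rho=\rho\circ\tau_g$; by Definition~\ref{DefEmbeddings}, with $\sigma=\tau_g$ and $\rho'=\rho$, this exhibits $\tau_g$ as an orientation-preserving automorphism of $\Pi$ (so $\Pi$ is $\Z_v$-regular with orientation-preserving translations, which is what Section~3 will need). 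The only genuinely hard step is the one I would import verbatim from~\cite{CDDYbiem}: exhibiting the explicit globally simple $k$-diagonal $\H(n;k)$ for the stated values of $n$ and checking that all of its $(n-2)[\mathcal{H}(t-2)]^2$ reordered variants remain globally simple and still admit a solution of $P$, which is exactly where the congruence conditions on $k$ and $n$ enter; everything else is bookkeeping with Corollaries~\ref{preprecedente} and~\ref{differentEmbeddings2}.
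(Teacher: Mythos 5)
The paper offers no proof of Theorem \ref{CDY}: it is imported verbatim from \cite{CDDYbiem}, so there is no internal argument to compare against. Your reconstruction is nevertheless sound and matches exactly the machinery the authors themselves deploy when they extend this result in Theorem \ref{CDY2} (the family of globally simple $k$-diagonal arrays with common entries, skeleton and a shared diagonal; Corollary \ref{preprecedente} for existence and simplicity; Corollary \ref{differentEmbeddings2} for distinctness; and the commutation $\tau_g\circ\rho=\rho\circ\tau_g$ from equation (\ref{ArchRho}) for the orientation-preserving translations, which is precisely the paper's own remark at the start of Section 3). You correctly isolate the only substantive content --- the explicit construction and the count $(n-2)[\mathcal{H}(t-2)]^2$ of admissible reorderings --- as the part that must be taken from \cite{CDDYbiem}, which is all the attribution in the theorem header asks of you.
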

Using Proposition \ref{upperboundFamily} and Theorem \ref{CDY}, we can prove the following result.

\begin{thm}\label{GeneralBound}
Let $v=2nk+1$, $k=4t+3$ and let $n\equiv 1 \pmod{4}$ be either a prime or $n\geq (7k+1)/3$. Moreover, if $n\equiv 0\pmod{3}$, we also assume that $k\equiv 7\pmod{12}$. Then, the number of non-isomorphic simple $k$-gonal biembeddings of $K_v$ is, at least, of:
$$\frac{(n-2)[\mathcal{H}(t-2)]^2}{2(2nk)^2}\approx \frac{\pi(t-2)^{2t-5}}{64e^{2t-2}n}\approx k^{k/2+o(k)}/v.$$
\end{thm}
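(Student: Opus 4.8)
The plan is to derive the first bound directly from Theorem \ref{CDY} and Proposition \ref{upperboundFamily}, and then to read off the two asymptotic equivalences from (\ref{Stirling}) and (\ref{Derangements}). First I would note that the simple $k$-gonal biembeddings of $K_v$ produced in Theorem \ref{CDY} are $\mathbb{Z}_v$-regular, being Archdeacon embeddings of Heffter arrays as in Theorem \ref{HeffterBiemb}, and that Theorem \ref{CDY} moreover guarantees that for every one of them and every $g\in\mathbb{Z}_v$ the translation $\tau_g$ is orientation preserving. Writing $K_v=K_{v\times 1}$, so that $m=v\geq 2$ and $|N(K_{v\times 1},0)|=v-1=2nk$, Proposition \ref{upperboundFamily} then applies in its stronger form: any subfamily of pairwise isomorphic, distinct such embeddings has size at most $2|Aut_0^+(\Pi_0)|\cdot (2nk)\leq 2(2nk)^2$, where $\Pi_0$ is a common isomorphism target.

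Next, let $\mathcal{F}$ be the family of the distinct simple $k$-gonal biembeddings of $K_v$ guaranteed by Theorem \ref{CDY}, so that $|\mathcal{F}|\geq (n-2)[\mathcal{H}(t-2)]^2$, and partition $\mathcal{F}$ into isomorphism classes. By the previous paragraph each class has at most $2(2nk)^2$ members, hence the number of non-isomorphic simple $k$-gonal biembeddings of $K_v$ is at least
$$\frac{|\mathcal{F}|}{2(2nk)^2}\ \geq\ \frac{(n-2)[\mathcal{H}(t-2)]^2}{2(2nk)^2},$$
which is the first asserted lower bound.

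It then remains to check the asymptotics. Replacing $\mathcal{H}(t-2)$ by $(t-2)!/e$ via (\ref{Derangements}), then $(t-2)!$ by $\sqrt{2(t-2)\pi}\,\big((t-2)/e\big)^{t-2}$ via (\ref{Stirling}), and finally using $k=4t+3$ (so that $k^{2}\sim 16(t-2)^{2}$) together with $(n-2)/n^{2}\sim 1/n$, a direct manipulation rewrites $\frac{(n-2)[\mathcal{H}(t-2)]^2}{2(2nk)^2}$ as $\frac{\pi(t-2)^{2t-5}}{64\,e^{2t-2}\,n}\,(1+o(1))$. For the last equivalence I would take base-$2$ logarithms and substitute $t=(k-3)/4$: the logarithm of $\pi(t-2)^{2t-5}/(64\,e^{2t-2})$ equals $\tfrac{k}{2}\log_2 k+o(k\log k)$, so this quantity is $k^{k/2+o(k)}$; since $v=2nk+1$ gives $1/n=2k/(v-1)$ and $2k=k^{o(k)}$, one obtains $\frac{\pi(t-2)^{2t-5}}{64\,e^{2t-2}\,n}=k^{k/2+o(k)}/v$, which would complete the proof.

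I do not expect any step to be a genuine obstacle: the structural input (the $\mathbb{Z}_v$-regularity of the embeddings and the fact that all translations preserve orientation) is supplied verbatim by Theorem \ref{CDY}, the counting is a single pigeonhole application of Proposition \ref{upperboundFamily}, and the rest is bookkeeping with Stirling's formula. The one point calling for a little care is verifying that the polynomial-in-$k$ prefactors, the factor $e^{2t-2}$, and the factor $2k$ relating $1/n$ to $1/v$ may all legitimately be absorbed into the $o(k)$ in the exponent of the final estimate.
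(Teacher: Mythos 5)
Your proposal is correct and follows essentially the same route as the paper: partition the $(n-2)[\mathcal{H}(t-2)]^2$ distinct embeddings of Theorem \ref{CDY} into isomorphism classes, bound each class by $2(v-1)^2=2(2nk)^2$ via the orientation-preserving case of Proposition \ref{upperboundFamily} (justified by the $\Z_v$-regularity and the translations being orientation preserving), and then apply the Stirling and derangement approximations. The only difference is that you spell out the asymptotic bookkeeping in more detail than the paper does.
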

\begin{proof}
Let us consider, for given $k$ and $v$, the distinct simple $k$-gonal biembeddings of $K_v$ provided in \cite{CDDYbiem}. Let us partition these embeddings into families of isomorphic ones. The thesis easily follows because, due to Proposition \ref{upperboundFamily}, each of these families has size at most $2(v-1)^2=8(nk)^2$. Then the lower bound on the number of non-isomorphic simple $k$-gonal biembeddings of $K_v$ can be approximated using the Stirling formula for the factorial, that is equation (\refeq{Stirling}), and the approximation (\refeq{Derangements}).
\end{proof}
Now we will show that, studying carefully the Crazy Knight's Tour Problem for the Heffter arrays found by Cavenagh, Donovan and Yaz\i c\i\ it is possible to get many other simple $k$-gonal biembeddings of $K_v$.

We consider here a $k$-diagonal array $A$ of size $n> k$ and vectors $\R=(1,\dots,1)$ and $\C\in \{-1,1\}^n$, whose $-1$ are in positions $E=(e_1,\dots,e_r)$ where $e_1<e_2<\dots<e_r$.
We state a characterization, obtained with the same proof of Lemma 4.19 of \cite{CDP}, of the solutions of $P(A)$ that have a trivial vector $\R$, i.e. $\R=(1,\dots,1)$.
\begin{lem}\label{Percorso1bis}
Let $k\geq3$ be an odd integer and let $A$ be a $k$-diagonal array of size $n> k$, widths $s_1,s_2,\dots, s_i$ and with non-empty diagonal $D_1$.
Then the vectors $\R=(1,\dots,1)$ and $\C\in \{-1,1\}^n$, where the positions of each $-1$ in $\C$ are described by $E$, are a solution of $P(A)$ if and only if:
\begin{itemize}
\item[(1)] for any $j\in [1,n]$, the list $E$ covers all the congruence classes modulo $d_j$, where $d_j=\gcd(n,s_j)$;
\item[(2)] the list $ L_{\R,\C}(1,1)$ covers all the positions of $\{(e,e)| e\in E\}$.
\end{itemize}
\end{lem}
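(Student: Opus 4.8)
The statement we need to prove, Lemma~\ref{Percorso1bis}, is a characterization of the solutions $(\R,\C)$ of $P(A)$ with $\R=(1,\dots,1)$, and since the authors tell us it is \emph{obtained with the same proof of Lemma~4.19 of \cite{CDP}}, my approach is to reconstruct that proof in the present (slightly more general) setting. The fundamental object is the permutation $S_{\R,\C}$ on $skel(A)$, and by Remark~\ref{RCordinering} the pair $(\R,\C)$ is a solution of $P(A)$ exactly when $S_{\R,\C}$ (equivalently $\omega_c\circ\omega_r$) consists of a single cycle of length $|\E(A)|$. So I must show that, under the stated hypotheses, this single-cycle condition is equivalent to conditions (1) and (2) together.

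First I would set up coordinates. Since $D_1$ is non-empty and $\R$ is trivial, the filled cells are $(i,j)$ with $j-i$ lying in a fixed union of residues determined by the diagonal structure; the widths $s_1,\dots,s_i$ record the gaps between consecutive non-empty diagonals. The key computation is to track how $S_{\R,\C}$ moves a cell: starting from $(i,j)$, moving within row $R_i$ (orientation $r_i=1$, so rightward) lands on the next filled cell in that row, and then moving within the reached column in orientation $c$ lands on the next filled cell. When the traversal stays away from the cells $\{(e,e):e\in E\}$ — i.e.\ away from the diagonal entries whose columns have orientation $-1$ in a ``critical'' position — the map $S_{\R,\C}$ acts essentially by a fixed translation on the diagonal index. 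I would make this precise: on each maximal block of consecutive non-empty diagonals (a block between two empty strips of widths $s_j$), the orbit structure of $S_{\R,\C}$, restricted to cells not of the form $(e,e)$, breaks into sub-orbits indexed by congruence classes modulo $d_j=\gcd(n,s_j)$, because crossing an empty strip of width $s_j$ shifts the relevant index by $s_j$ modulo $n$. This is precisely where condition (1) comes from: the list $E$ of positions of $-1$'s must meet every class modulo $d_j$, for otherwise some class is never ``connected'' to the rest and $S_{\R,\C}$ splits into more than one cycle.

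Next, with condition~(1) in force, I would argue that the only remaining obstruction to a single cycle is the behaviour of $S_{\R,\C}$ on the diagonal cells $\{(e,e):e\in E\}$: these are exactly the cells where the interaction between $\R=(1,\dots,1)$ and the $-1$'s of $\C$ creates the ``turning points'' of the knight's path, and the path $L_{\R,\C}(1,1)$ closes up into a full cycle iff it visits all of them — this is condition~(2). Concretely, I would show that the cells $(e,e)$ are the only candidates to be ``fixed'' by the reduced dynamics (the dynamics after quotienting by the translation structure of step), so the full permutation $S_{\R,\C}$ is a single cycle iff (i) the translational part is transitive on diagonals, which is (1), and (ii) the starting orbit through $(1,1)$ actually sweeps through all the turning cells $(e,e)$, which is (2). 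For the converse, I would check that if both (1) and (2) hold, then following $L_{\R,\C}(1,1)$ one is forced to visit every filled cell: (2) guarantees all turning points are on this orbit, and between consecutive turning points (1) guarantees the translational segments cover all the diagonals in the intervening blocks.

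\textbf{The main obstacle.} The delicate part is the bookkeeping for how $S_{\R,\C}$ acts when a row- or column-move jumps \emph{across} an empty strip versus when it moves within a full block, and how the $-1$ entries of $\C$ interact with this — in particular verifying that the diagonal cells $(e,e)$ are genuinely the right set of ``exceptional'' cells and that conditions (1), (2) are not only necessary but jointly sufficient, with no further hidden condition. Handling the case where several empty strips have different widths $s_j$ (hence different moduli $d_j$) simultaneously, and making sure the transitivity arguments for the different blocks patch together correctly through the turning cells, is where I expect to spend the most care; the odd-ness of $k$ is used precisely to guarantee that the row-move and column-move parities line up so that the path does not prematurely return to its start. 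Since the authors explicitly say the proof is identical to that of \cite[Lemma~4.19]{CDP}, in the write-up I would either transcribe that argument with the obvious notational adjustments for general widths, or simply cite it; the one genuine check is that nothing in Lemma~4.19's proof used a restrictive hypothesis (e.g.\ a single empty strip) that fails here.
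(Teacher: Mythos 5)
Your approach coincides with the paper's, in the only sense available: the paper gives no proof of Lemma~\ref{Percorso1bis} at all, stating only that it is ``obtained with the same proof of Lemma 4.19 of \cite{CDP}'', and your plan likewise reduces to transcribing or citing that argument. So at the level of strategy there is nothing to distinguish the two, and your identification of the roles of conditions (1) and (2) — transitivity across the empty strips, and the orbit of $(1,1)$ meeting a distinguished set of cells — is the right high-level picture.

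One caution before you actually write the argument out rather than cite it: your sketch of the dynamics is inaccurate in a way that matters. You treat the diagonal cells $(e,e)$, $e\in E$, as \emph{the} exceptional cells, with $S_{\R,\C}$ acting by a fixed translation on the diagonal index elsewhere. But the orientation reversal caused by $c_e=-1$ is triggered on \emph{every} visit to column $e$ (the column move is taken in the orientation of the column one lands in after the row move), not only at the cell $(e,e)$; compare the explicit traversal spelled out in the proof of Proposition~\ref{solutionPrime} of this paper, where after leaving $D_1$ the path runs along $D_k, D_{k-3},\dots$ and the columns indexed by $E$ act as turning points on each of those diagonals in turn, with the oddness of $k$ governing on which diagonal the path re-enters $D_1$ (not a row/column ``parity'' of the kind you guess). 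The set $\{(e,e):e\in E\}$ is instead a system of markers: the substantive claims one must prove are that every $S_{\R,\C}$-orbit necessarily meets this set (so that condition (2) forces all orbits to merge with the orbit of $(1,1)$), and that between consecutive visits to it the orbit sweeps full congruence classes modulo $d_j$ in each strip (which is where condition (1) enters). Your proposal asserts the first of these without justification, and it is exactly the step that cannot be dismissed as bookkeeping; if you are not going to reprove it, the honest course — and the one the paper itself takes — is simply to cite \cite[Lemma 4.19]{CDP} after checking that its proof uses nothing that fails for several empty strips of distinct widths.
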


\begin{prop}\label{solutionPrime}
Let $k$ be an odd integer, $n>8k$ be a prime, and let $A$ be a $k$-diagonal Heffter array $\H(n;k)$ whose filled diagonals are $D_1,D_2,\dots,D_{k-3},D_{k-1},D_{k},D_{k+1}$. Then, the number of distinct solutions of $P(A)$ is at least of:
$$ 2\binom{\lceil n/2k\rceil}{ \lceil n/8k\rceil}\gtrsim \frac{\sqrt{k}}{\sqrt{3\pi n}} 2^{\frac{n}{2k}\cdot H(1/4)+3}.$$
\end{prop}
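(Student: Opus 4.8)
The plan is to apply Lemma~\ref{Percorso1bis} with the trivial row vector $\R=(1,\dots,1)$, thereby reducing the problem to producing exponentially many admissible column vectors $\C$, and then to double the count via Lemma~\ref{-R-C}. First note that $A$ is $k$-diagonal with exactly two empty strips: one of width $s_1=1$, namely the diagonal $D_{k-2}$, and one of width $s_2=n-k-1$, namely $D_{k+2},D_{k+3},\dots,D_n$. Since $n$ is prime, $0<s_1<n$, and $\gcd(n,s_2)=\gcd(n,k+1)=1$ because $0<k+1<n$, we obtain $d_1=d_2=1$; hence condition~(1) of Lemma~\ref{Percorso1bis} holds for every non-empty $E$. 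Consequently $(\R,\C)$ is a solution of $P(A)$ exactly when $L_{\R,\C}(1,1)$ visits every cell $(e,e)$ such that $e$ is the position of a $-1$ in $\C$.

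The core of the argument is a path analysis of $L_{\R,\C}(1,1)$ modelled on the proof of Lemma~4.19 of \cite{CDP}. Because $\R=(1,\dots,1)$, the successor map $S_{\R,\C}$ has a transparent block structure on the filled cells away from the $-1$'s of $\C$, and one tracks how a $-1$ in column $e$ ``splices'' the diagonal cell $(e,e)$ onto the tour. Partitioning the columns $\{1,\dots,n\}$ into $m:=\lceil n/2k\rceil$ consecutive blocks of length (essentially) $2k$, one singles out in each block a column $p_j$ whose diagonal cell $(p_j,p_j)$ is forced to lie on $L_{\R,\C}(1,1)$ --- and moreover such that $S_{\R,\C}$ becomes a single cycle on $skel(A)$ --- as soon as the $-1$'s of $\C$ sit at a subset of $\{p_1,\dots,p_m\}$ that is spread evenly among the blocks. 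Here the hypothesis $n>8k$ is what makes the blocks long enough for the splicing to stay local and the columns $p_j$ sufficiently separated, while the oddness of $k$ enters, as in \cite{CDP}, to keep the orientation bookkeeping along each block consistent.

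Granting this, for every subset $E\subseteq\{p_1,\dots,p_m\}$ with $|E|=\lceil n/8k\rceil$ (so that on average one block in four is chosen and the chosen blocks are spread evenly) let $\C_E$ be the column vector whose $-1$'s lie exactly in the columns of $E$. Then $(\R,\C_E)$ is a solution of $P(A)$ by the preceding paragraph together with Lemma~\ref{Percorso1bis}, and distinct choices of $E$ give distinct $\C_E$, hence distinct solutions. By Lemma~\ref{-R-C}, $(-\R,-\C_E)$ is again a solution of $P(A)$, and since its row vector equals $(-1,\dots,-1)\ne\R$ it differs from every $(\R,\C_{E'})$. Therefore $P(A)$ has at least $2\binom{m}{\lceil n/8k\rceil}=2\binom{\lceil n/2k\rceil}{\lceil n/8k\rceil}$ solutions, and the displayed asymptotic estimate follows by inserting $p=1/4$ and (essentially) $m=n/2k$ into the approximation \eqref{Entropy}.

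The main obstacle is the second paragraph: converting the informal ``splicing'' picture into a rigorous proof that, uniformly over the exponentially many admissible $E$, each distinguished diagonal cell $(p_j,p_j)$ with $p_j\in E$ is actually visited --- equivalently, that $S_{\R,\C_E}$ is a single $nk$-cycle on $skel(A)$. This is precisely where $n>8k$, the shape of the filled diagonals (a width-$1$ gap at $D_{k-2}$, then $D_{k-1},D_k,D_{k+1}$, then the long empty strip) and the parity of $k$ all get used; the remainder is elementary counting together with the quoted approximations.
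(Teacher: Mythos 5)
Your overall skeleton matches the paper's: take $\R=(1,\dots,1)$, reduce to Lemma~\ref{Percorso1bis} (condition~(1) being free since $n$ is prime, exactly as you compute with $\gcd(n,1)=\gcd(n,k+1)=1$), exhibit exponentially many admissible $\C$, and double via Lemma~\ref{-R-C}. But the step you defer in your second and fourth paragraphs is the entire content of the proposition, and the family of sets $E$ you propose is not the right one, so the gap is not merely one of rigour. The paper's mechanism is this: if every element of $E$ lies in the residue class $1\pmod{2k}$, then the return map $\omega_{\C}$ on $E$ (first visit of $L_{\R,\C}$ to a cell $(e',e')$ with $e'\in E$ after leaving $(e,e)$) can be computed explicitly by following the tour through the diagonals $D_1$, then $D_k, D_{k-3}, D_{k-5},\dots,D_2$, then $D_{k+1},D_{k-1},\dots,D_3$: the backward leg along $D_1$ contributes $\gamma^{-1}$ (here the congruence condition is what forces the step to land on the predecessor in $E$) and the remaining $k-1$ legs each contribute $\gamma$, so $\omega_{\C}=\gamma^{k-2}$ where $\gamma=(e_1,\dots,e_r)$. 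Condition~(2) of Lemma~\ref{Percorso1bis} then holds if and only if $\gamma^{k-2}$ is an $r$-cycle, i.e.\ if and only if $\gcd(r,k-2)=1$.

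This is where your construction breaks. You fix $|E|=\lceil n/8k\rceil$, but nothing guarantees $\gcd(\lceil n/8k\rceil,k-2)=1$; whenever it fails, $\omega_{\C}$ decomposes into several cycles, the tour misses some cells $(e,e)$, and $(\R,\C_E)$ is \emph{not} a solution of $P(A)$. The paper avoids this by fixing $r$ to be a prime in $[\tfrac{n}{8k},\tfrac{n}{4k}]$ (Bertrand's postulate --- this, not "block length", is the real role of the hypothesis $n>8k$) and then bounding $\binom{\lceil n/2k\rceil}{r}\geq\binom{\lceil n/2k\rceil}{\lceil n/8k\rceil}$ since $r$ lies between $\lceil n/8k\rceil$ and half of $\lceil n/2k\rceil$. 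Your "spread evenly among the blocks" requirement is also a red herring: no such spreading is needed (any $r$-subset of the residue class works), and if you did impose it as a genuine constraint you would no longer be counting all $\binom{\lceil n/2k\rceil}{\lceil n/8k\rceil}$ subsets, invalidating the stated lower bound. To repair your argument, replace "all subsets of size $\lceil n/8k\rceil$ of $\{p_1,\dots,p_m\}$" by "all subsets of prime size $r\in[\tfrac{n}{8k},\tfrac{n}{4k}]$ of the integers $\equiv 1\pmod{2k}$ in $[1,n]$", and carry out the diagonal-by-diagonal computation of $\omega_{\C}$ sketched above.
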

\begin{proof}
Let us consider a subset $E=(e_1,\dots,e_r)$
where $e_1<e_2<\dots<e_r$ of $[1,n]$ that satisfies the following properties:
\begin{itemize}
\item[(1)] the elements $e_1,\dots,e_r$ of $E$ are integers equivalent to $1$ modulo $2k$;
\item[(2)] $r=|E|$ is coprime with $k-2$.
\end{itemize}
A set $E$ with such properties can be constructed as follows. Let $r$ be a prime in the range $[\frac{n}{8k}, \frac{n}{4k}]$ that exists because of Bertrand's postulate. Then we choose $r$ elements $e_1,\dots,e_r$ among the $\lceil n/2k\rceil$ integers equivalent to $1$ modulo $2k$ contained in $[1,n]$. The number of such choices is at least of
$$\binom{\lceil n/2k\rceil}{r}\geq \binom{\lceil n/2k\rceil}{ \lceil n/8k\rceil}.$$
Note that, due to the approximation for the binomial coefficients, see equation (\refeq{Entropy}), this number can be approximated to
$$ \binom{\lceil n/2k\rceil}{ \lceil n/8k\rceil}\gtrsim \frac{\sqrt{k}}{\sqrt{3\pi n}} 2^{\frac{n}{2k}\cdot H(1/4)+2}.$$

Hence, in order to obtain the thesis, it suffices to prove that, set $\R=(1,1,\dots,1)$ and $\C_E\in \{ -1,1\}^n$ whose $-1$ are in positions $E=(e_1,\dots,e_r)$, $(\R,\C_E)$ is a solution for $P(A)$. Indeed, according to Lemma \ref{-R-C}, the number of distinct solutions of $P(A)$ would be, at least, of
$$ 2\binom{\lceil n/2k\rceil}{ \lceil n/8k\rceil}\gtrsim \frac{\sqrt{k}}{\sqrt{3\pi n}} 2^{\frac{n}{2k}\cdot H(1/4)+3}.$$
Since $n$ is a prime, condition $(1)$ of Lemma \ref{Percorso1bis} is satisfied. We need to check that also condition $(2)$ of the same lemma holds. At this purpose, we consider an element $(e,e)\in D_1$ with $e\in E$, then there exists a minimum $m\geq 1$ such that $S_{\R,\C}^m((e,e))=(e',e')$ for some $e'\in E$.
We define the permutation $\omega_{\C}$ on $E$ as $\omega_{\C}(e)=e'$.
We need to prove that $\omega_{\C}$ is a cycle of order $r$.
Given $e\in E$, the second cell of the form $(e',e')$ with $e'\in E$ we meet in the list $ L_{\R,\C}(e,e)$ is reached after the following moves:
\begin{itemize}
\item[{[1]}] from $(e,e)$ we move backward into the diagonal $D_1$ with steps of length $k$ until we reach a cell of the form $(e_i+k,e_i+k)$ with $e_i\in E$;
\item[{[2]}] from $S_{\R,\C}(e_i+k,e_i+k)=(e_i+(k-1),e_i)$ we move forward into the diagonal $D_{k}$ with steps of length $1$ until we reach the cell $(e_{i+1}-1+(k-1),e_{i+1}-1)$, where the indices are considered modulo $r$ (as for the rest of this proof);
\item[{[3]}] from $S_{\R,\C}(e_{i+1}-1+(k-1),e_{i+1}-1)=(e_{i+1}+(k-4),e_{i+1})$ we move forward into the diagonal $D_{k-3}$ with steps of length $1$ until we reach the cell $(e_{i+2}-1+(k-4),e_{i+2}-1)$; we reiterate this procedure into the diagonals $D_{k-5},D_{k-7},\dots,D_4$;
\item[{[4]}] since $k$ is odd, we arrive to the cell $(e_{i+(k-3)/2}+1,e_{i+(k-3)/2})\in D_2$ from which we move forward with steps of length $1$ until we reach the cell $(e_{i+(k-1)/2},e_{i+(k-1)/2}-1)$;
\item[{[5]}] from $S_{\R,\C}(e_{i+(k-1)/2},e_{i+(k-1)/2}-1)=(e_{i+(k-1)/2}+k,e_{i+(k-1)/2})$ we move forward into the diagonal $D_{k+1}$ with steps of length $1$ until we reach the cell $(e_{i+(k+1)/2}-1+k,e_{i+(k+1)/2}-1)$; we reiterate this procedure into the diagonals $D_{k-1} $ (here with steps of length $2$),$D_{k-4},\dots,D_3$;
\item[{[6]}] since $k$ is odd, we arrive to the cell $(e_{i+(k-1)},e_{i+(k-1)})\in D_1$ that is the second one of the form $(e',e')\in D_1$ with $e'\in E$ we meet in the list $L_{\R,\C}(e,e)$.
\end{itemize}
We denote by $\gamma$ the cyclic permutation of the elements of $E$ defined by $(e_1,\dots,e_r)$. We note that since the distances between elements of $E$ are multiples of $k$, in the first step of the above procedure we apply the permutation $\gamma^{-1}$. Then, from the previous discussion, it follows that $\omega_{\C}=\gamma^{k-1}\circ \gamma^{-1}=\gamma^{k-2}.$
Since $r$ is coprime with $k-2$ and $\gamma$ is a cycle of order $r$, then $\omega_{\C}$ is also a cycle of order $r$ and hence condition $(2)$ of Lemma \ref{Percorso1bis} is satisfied.
\end{proof}

\begin{rem}\label{MigliorareEsponente}
We note that, if $n$ is sufficiently large, in the proof of Proposition \ref{solutionPrime}, the choice of $r$ could also be done in the range $[\lambda\frac{n}{k}, \frac{n}{4k}]$ where $\lambda$ is smaller than $1/4$. In fact, if $|\frac{n}{4k}-\lambda\frac{n}{k}|\geq k-2$, we can find $r$ coprime with $k-2$ also in this range. It follows that, given $\lambda<1/4$, we can replace the exponent $\frac{n}{2k}\cdot H(1/4)$ of the previous proposition with $\frac{n}{2k}\cdot H(2\lambda)$. However, due to the complications in the notations, we believe it is better to write the statement in the ``clearest'' case.
\end{rem}

\begin{thm}\label{CDY2}
Let $v=2nk+1$, $k=4t+3$ and let $n\equiv 1\pmod{4}$ be a prime greater than $8k$. Then the number of distinct simple $k$-gonal biembeddings of $K_v$ is, at least, of:
$$2(n-2)[\mathcal{H}(t-2)]^2 \binom{\lceil n/2k\rceil}{ \lceil n/8k\rceil} \gtrsim \frac{[(t-2)!]^2\sqrt{(4t+3)n}}{e^2\sqrt{3\pi}} 2^{\frac{n}{2(4t+3)}\cdot H(1/4)+3}.$$
Also, for all such embeddings and all $g\in \mathbb{Z}_v$, $\tau_g$ is an orientation preserving automorphism.
\end{thm}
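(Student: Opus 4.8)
The plan is to run the Heffter arrays underlying Theorem \ref{CDY} through Proposition \ref{solutionPrime}, and then to use Corollaries \ref{preprecedente} and \ref{differentEmbeddings2} to convert the resulting abundance of Crazy Knight's Tour solutions into an abundance of pairwise distinct simple $k$-gonal biembeddings of $K_v$. First I would observe that the hypotheses here are strong enough to apply both ingredients: since $n$ is a prime with $n>8k\geq 24$, it is not divisible by $3$, so the extra condition ``$k\equiv 7\pmod{12}$'' in Theorem \ref{CDY} is vacuous and the Cavenagh--Donovan--Yaz\i c\i\ construction is available, while ``$n>8k$ prime'' is exactly what Proposition \ref{solutionPrime} (through Lemma \ref{Percorso1bis}) needs. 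The one piece of bookkeeping I would then carry out carefully is to extract from \cite{CDDYbiem} a set $\mathcal{S}$ of $(n-2)[\mathcal{H}(t-2)]^2$ pairwise distinct globally simple Heffter arrays $\H(n;k)$ which, after a suitable cyclic translation of the cells, all share the same skeleton --- precisely the filled diagonals $D_1,D_2,\dots,D_{k-3},D_{k-1},D_k,D_{k+1}$ required in Proposition \ref{solutionPrime} --- all have the same list of entries $\E(A)$, and all coincide on one fixed non-empty diagonal $D_{\bar{i}}$ (the portion of the array left untouched by the parameters indexing $\mathcal{S}$).

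Granting this, fix $A\in\mathcal{S}$. As $k=4t+3$ is odd, $n>8k$ is prime, and the filled diagonals of $A$ are exactly those listed above, Proposition \ref{solutionPrime} gives at least $2\binom{\lceil n/2k\rceil}{\lceil n/8k\rceil}$ distinct solutions of $P(A)$, each of the form $(\R,\C_E)$ with $\R=(1,\dots,1)$. Applying Corollary \ref{preprecedente} with $t=1$, each such solution produces a biembedding of $K_{2nk+1}=K_v$ all of whose face boundaries have length $k$, and --- because $A$ is globally simple --- this biembedding is simple; hence it is a simple $k$-gonal biembedding of $K_v$. Moreover it is an Archdeacon embedding, i.e. it carries the rotation $\rho$ of (\ref{ArchRho}), so by the observation at the start of Section 3 every translation $\tau_g$ is an orientation preserving automorphism, which settles the last claim of the statement.

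It remains to count. Running over all $A\in\mathcal{S}$ and over all the solutions of $P(A)$ just produced gives at least $(n-2)[\mathcal{H}(t-2)]^2\cdot 2\binom{\lceil n/2k\rceil}{\lceil n/8k\rceil}$ pairs (array, solution). Every pair of arrays of $\mathcal{S}$ is $k$-diagonal, globally simple, has a common non-empty diagonal, the same $\E(A)$ and the same skeleton, and the associated $P$'s are solvable; so Corollary \ref{differentEmbeddings2} applies and tells us that two of our pairs induce the same $k$-gonal biembedding of $K_v$ only when both the array and the solution coincide (its special case $A=B$ recovers the fact recorded in Remark \ref{NOlambda} that distinct solutions of a fixed $P(A)$ give distinct biembeddings). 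Thus the biembeddings obtained are pairwise distinct, and their number is at least $2(n-2)[\mathcal{H}(t-2)]^2\binom{\lceil n/2k\rceil}{\lceil n/8k\rceil}$. The displayed asymptotic estimate then drops out by substituting $\mathcal{H}(t-2)\approx(t-2)!/e$ from (\ref{Derangements}), the bound $\binom{\lceil n/2k\rceil}{\lceil n/8k\rceil}\gtrsim\frac{\sqrt{k}}{\sqrt{3\pi n}}2^{\frac{n}{2k}H(1/4)+2}$ obtained inside the proof of Proposition \ref{solutionPrime}, $2(n-2)\approx 2n$ and $k=4t+3$; this last step is a routine computation.

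The only genuinely delicate point is the first paragraph: one must revisit the Cavenagh--Donovan--Yaz\i c\i\ construction closely enough to certify both that their family of arrays has the rigid diagonal shape demanded by Proposition \ref{solutionPrime} and that its members pairwise meet the coincidence hypotheses of Corollary \ref{differentEmbeddings2}. Everything after that is just the assembly of Proposition \ref{solutionPrime}, Corollary \ref{preprecedente}, Corollary \ref{differentEmbeddings2} and the standard Stirling/derangement approximations.
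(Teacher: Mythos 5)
Your proposal is correct and follows essentially the same route as the paper: apply Proposition \ref{solutionPrime} to each of the $(n-2)[\mathcal{H}(t-2)]^2$ globally simple arrays of the Cavenagh--Donovan--Yaz\i c\i\ family (which the paper likewise asserts have the required diagonal structure, common entries, common skeleton, and coincide on several diagonals), then invoke Corollaries \ref{preprecedente} and \ref{differentEmbeddings2} to conclude that distinct (array, solution) pairs yield distinct simple $k$-gonal biembeddings, and finish with the Stirling/derangement estimates. Your explicit flagging of the verification that the family from \cite{CDDYbiem} meets the hypotheses of Proposition \ref{solutionPrime} and Corollary \ref{differentEmbeddings2} is exactly the step the paper dispatches with a one-line assertion.
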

\begin{proof}
We note that, if $n$ is a prime, each array of the family $\mathcal{F}_{n,k}:=\{A_i:\ i\in \mathcal{A}_{n,k}\}$ of globally simple $H(n;k)$s constructed in \cite{CDDYbiem} satisfies (setting $\alpha=2p+2$) the hypotheses of Proposition \ref{solutionPrime}. Therefore, for each array $A_i$ of $\mathcal{F}_{n,k}$ the number of solutions of $P(A_i)$ is at least of: $$ 2\binom{\lceil n/2k\rceil}{ \lceil n/8k\rceil}\gtrsim \frac{\sqrt{k}}{\sqrt{3\pi n}} 2^{\frac{n}{2k}\cdot H(1/4)+3}.$$

We also recall that, due to Theorem \ref{CDY}, the number of such arrays is at least of:
$$(n-2)[\mathcal{H}(t-2)]^2\approx (n-2)[(t-2)!/e]^2.$$

Now we note that, given $n$ and $k$, these arrays have all the same entries and skeleton and coincide in at least $5$ diagonals. Therefore, because of Corollary \ref{differentEmbeddings2}, however we take $A_i\in \mathcal{F}_i$ and a solution of $P(A_i)$, we determine a different embedding.

It follows that the number of distinct simple $k$-gonal biembeddings of $K_v$ is, at least, of:
$$2(n-2)[\mathcal{H}(t-2)]^2 \binom{\lceil n/2k\rceil}{ \lceil n/8k\rceil} \gtrsim \frac{[(t-2)!]^2\sqrt{kn}}{e^2\sqrt{3\pi}} 2^{\frac{n}{2k}\cdot H(1/4)+3}.$$
\end{proof}

By Proposition \ref{upperboundFamily} and Theorem \ref{CDY2}, it follows that:
\begin{thm}\label{CDY3}
Let $v=2nk+1$, $k=4t+3$ and let $n\equiv 1 \pmod{4}$ be a prime greater than $8k$. Then the number of non-isomorphic simple $k$-gonal biembeddings of $K_v$ is, at least, of:
$$\frac{(n-2)}{(2nk)^2}[\mathcal{H}(t-2)]^2 \binom{\lceil n/2k\rceil}{ \lceil n/8k\rceil} \approx \frac{[(t-2)!]^2}{e^2\sqrt{3\pi(n(4t+3))^3}} 2^{\frac{n}{2(4t+3)}\cdot H(1/4)}\approx k^{\frac{k}{2}+o(k)} \cdot 2^{v\cdot \frac{H(1/4)}{(2k)^2}+o(v)}.$$
\end{thm}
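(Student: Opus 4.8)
The statement is an immediate corollary of two results already established: Theorem \ref{CDY2}, which gives a lower bound $N$ on the number of \emph{distinct} simple $k$-gonal biembeddings of $K_v$, and Proposition \ref{upperboundFamily}, which bounds the size of each isomorphism class of such embeddings. The plan is simply to divide the former by the latter. First I would recall that all the embeddings counted in Theorem \ref{CDY2} are $\mathbb{Z}_v$-regular (they arise from the Archdeacon construction applied to Heffter arrays $\H(n;k)$ via Corollary \ref{preprecedente}) and that, by the last sentence of Theorem \ref{CDY2}, for each of them every translation $\tau_g$ is an orientation preserving automorphism. Hence Proposition \ref{upperboundFamily} applies in its sharper form: any family of pairwise distinct, mutually isomorphic such embeddings has cardinality at most $2|N(K_v,0)|^2 = 2(v-1)^2 = 2(2nk)^2 = 8(nk)^2$, since here $t=1$, $m=v$ and $|N(K_v,0)| = v-1 = 2nk$.

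Next I would partition the set of distinct simple $k$-gonal biembeddings provided by Theorem \ref{CDY2} into isomorphism classes. Each class has size at most $8(nk)^2$ by the previous paragraph, so the number of classes — that is, the number of non-isomorphic simple $k$-gonal biembeddings of $K_v$ — is at least
$$
\frac{2(n-2)[\mathcal{H}(t-2)]^2 \binom{\lceil n/2k\rceil}{\lceil n/8k\rceil}}{8(nk)^2} = \frac{(n-2)}{4(nk)^2}[\mathcal{H}(t-2)]^2 \binom{\lceil n/2k\rceil}{\lceil n/8k\rceil}.
$$
Since $4(nk)^2 = (2nk)^2$, this is exactly the first expression in the statement.

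It then remains to carry out the asymptotic simplification. Using Stirling's formula \eqref{Stirling} and the derangement approximation \eqref{Derangements}, one has $\mathcal{H}(t-2) \approx (t-2)!/e$, so $[\mathcal{H}(t-2)]^2 \approx [(t-2)!]^2/e^2$; applying the binomial approximation \eqref{Entropy} with $p = 1/4$ and $m = \lceil n/2k\rceil$ gives $\binom{\lceil n/2k\rceil}{\lceil n/8k\rceil} \approx \frac{1}{\sqrt{2\pi (n/2k)(3/16)}} 2^{(n/2k)H(1/4)}$, i.e. a factor asymptotic to $\sqrt{k}/\sqrt{3\pi n}$ times $2^{(n/2k)H(1/4)}$. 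Multiplying these together with $(n-2)/(2nk)^2$ and simplifying the polynomial-in-$n,k$ prefactor yields the middle expression $\frac{[(t-2)!]^2}{e^2\sqrt{3\pi(n(4t+3))^3}}\, 2^{\frac{n}{2(4t+3)}H(1/4)}$. Finally, translating to the variables $k$ and $v$: from $k = 4t+3$ and Stirling one gets $[(t-2)!]^2 = k^{k/2 + o(k)}$ (since $(t-2)! = (k/4)^{k/4+o(k)}$ up to subexponential factors), the polynomial denominator is $v^{O(1)} = 2^{o(v)}$, and $\frac{n}{2(4t+3)} = \frac{n}{2k} = \frac{v-1}{4k^2} = \frac{v}{(2k)^2} + o(v)$; this produces the claimed form $k^{\frac{k}{2}+o(k)}\cdot 2^{v\cdot \frac{H(1/4)}{(2k)^2}+o(v)}$.

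**Main obstacle.** There is no real conceptual obstacle — the proof is a packaging of earlier results. The only point requiring care is bookkeeping in the asymptotics: one must be consistent about which quantities are held fixed versus tending to infinity when writing $o(k)$ and $o(v)$, and one must check that the polynomial prefactors in $n$ and $k$ are genuinely absorbed into $2^{o(v)}$ (which uses $n = \Theta(v/k)$ and $k = O(v^{1/2})$, both consequences of $v = 2nk+1$ with $n > 8k$). I would also note explicitly that the hypothesis $m \geq 2$ of Proposition \ref{upperboundFamily} holds trivially since $v = 2nk+1 \geq 3$.
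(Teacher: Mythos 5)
Your proposal is correct and follows exactly the paper's route: the paper also obtains Theorem \ref{CDY3} by partitioning the distinct embeddings counted in Theorem \ref{CDY2} into isomorphism classes, bounding each class by $2(v-1)^2=2(2nk)^2$ via the orientation-preserving case of Proposition \ref{upperboundFamily} (the translations being orientation-preserving automorphisms), dividing, and then applying the approximations \eqref{Stirling}, \eqref{Entropy} and \eqref{Derangements}. The arithmetic and asymptotic bookkeeping in your write-up check out against the stated bound.
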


\begin{prop}\label{solutionGeneral}
Let $k$ be an odd integer and let $A$ be a $k$-diagonal Heffter array $\H(n;k)$ whose filled diagonals are $D_1,D_2,\dots,D_i,D_{i+s_1},D_{i+s_1+2},D_{i+s_1+3},\dots,D_{k+s_1}$. Assuming that $\gcd(n,2)=\gcd(n,s_1)=\gcd(n,k+s_1-1)=1$, the number of distinct solutions of $P(A)$ is at least of $ 2\binom{n}{2}.$
\end{prop}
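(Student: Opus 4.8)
The plan is to produce, for every $2$-element subset $E=\{e_1,e_2\}$ of $[1,n]$, a solution of $P(A)$ with trivial row vector, and then to double the count using Lemma \ref{-R-C}. Concretely, set $\R=(1,\dots,1)$ and let $\C_E\in\{-1,1\}^n$ have its two $-1$'s exactly in positions $e_1$ and $e_2$; the claim to be established is that $(\R,\C_E)$ solves $P(A)$. Granting this, Lemma \ref{-R-C} makes $(-\R,-\C_E)$ a solution too, and the resulting $2\binom{n}{2}$ solutions are pairwise distinct: $\C_E$ determines $E$, $-\C_E$ determines $E$ (its $-1$'s occupy $[1,n]\setminus E$), and the two families are distinguished by their first vector being $(1,\dots,1)$ versus $(-1,\dots,-1)$. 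This yields the asserted bound.

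To prove the claim I would apply Lemma \ref{Percorso1bis}, whose hypotheses hold since $k\ge 3$ is odd, $A$ is $k$-diagonal of size $n>k$, and $D_1$ is non-empty. Condition $(1)$ of that lemma is automatic here: the blocks of consecutive filled diagonals of $A$ are $D_1,\dots,D_i$, then $D_{i+s_1}$, then $D_{i+s_1+2},\dots,D_{k+s_1}$, so the jumps between consecutive blocks (going around the torus) are $s_1$, $2$ and $n-(k+s_1)+1\equiv-(k+s_1-1)\pmod n$, while inside each block the jump is $1$; by the hypotheses $\gcd(n,s_1)=\gcd(n,2)=\gcd(n,k+s_1-1)=1$, so every number $d_j$ attached to an empty strip of $A$ in Lemma \ref{Percorso1bis} equals $1$, and any nonempty $E$ trivially meets every residue class modulo each $d_j$. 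Thus the whole matter reduces to condition $(2)$: that the list $L_{\R,\C_E}(1,1)$ contains both cells $(e_1,e_1)$ and $(e_2,e_2)$.

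For condition $(2)$ I would reason as in the proof of Proposition \ref{solutionPrime}: let $\omega_{\C_E}$ be the permutation of $E$ sending $e$ to the label $e'$ of the first cell among $(e_1,e_1),(e_2,e_2)$ reached by the tour strictly after $(e,e)$; then it suffices that $\omega_{\C_E}$ be a single cycle on $E$, i.e., since $|E|=2$, that $\omega_{\C_E}$ be the transposition $(e_1\ e_2)$ rather than the identity. Starting the tour at $(e_1,e_1)\in D_1$ and following it, the vector $\C_E$ acts exactly like the all-ones column vector except at the two columns $e_1,e_2$, so the trajectory splits into phases of the same type as in Proposition \ref{solutionPrime}: a backward run inside $D_1$, then forward runs threading the outermost filled diagonals, zig-zagging across the size-$2$ gap and across the size-$s_1$ gap, and — because $k$ is odd — eventually closing up through $D_1$ again. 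Bookkeeping the number of columns of $E$ crossed before the next diagonal cell of $E$ is met shows that $\omega_{\C_E}$ is a fixed power of $(e_1\ e_2)$ with odd exponent (the parity traced back to $k$ being odd), hence $\omega_{\C_E}=(e_1\ e_2)$ and condition $(2)$ holds. This proves the claim, and with it the proposition.

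The delicate step is this last trajectory analysis: with only two flipped columns one must check that the tour really does visit $(e_1,e_1)$ and $(e_2,e_2)$ consecutively, correctly accounting for the interaction of the two defect columns with the size-$2$ gap and with the wrap-around gap of size $n-k-s_1$. As in Lemma 4.19 of \cite{CDP} and the six-step computation inside the proof of Proposition \ref{solutionPrime}, this is a finite but intricate case check; everything else is routine.
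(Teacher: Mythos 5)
Your proposal is correct and follows essentially the same route as the paper: reduce to Lemma \ref{Percorso1bis}, note that the gcd hypotheses make condition (1) automatic, verify condition (2) by showing the induced permutation on $E=\{e_1,e_2\}$ is $\gamma^{k-2}=(e_1\ e_2)$ (a $2$-cycle since $k-2$ is odd), and double the count via Lemma \ref{-R-C}. The trajectory bookkeeping you flag as the delicate step is likewise handled in the paper only by reference to the argument of Proposition \ref{solutionPrime}, so your level of detail matches the original.
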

\begin{proof}
Let us consider a subset $E=(e_1,e_2)$
where $e_1<e_2$ of $[1,n]$. Hence in order to obtain the thesis, it suffices to prove that, set $\R=(1,1,\dots,1)$ and $\C_{E}\in \{ -1,1\}^n$ whose $-1$ are in positions $E=(e_1,e_2)$, $(\R,\C_{E})$ is a solution for $P(A)$. Indeed, due to Lemma \ref{-R-C}, the number of distinct solutions of $P(A)$ would be, at least, of $ 2\binom{n}{2}$.
Since $n$ is coprime with $2$, $s_1$ and $k+s_1-1$, condition $(1)$ of Lemma \ref{Percorso1bis} is satisfied. We need to check that also condition $(2)$ holds. Defined $\omega_{\C}$ and $\gamma$ as in the proof of Proposition \ref{solutionPrime}, we obtain that, also here, $\omega_{\C}=\gamma^{k-2}$.
Since $\gamma$ is a cycle of order $2$ and $k-2$ is odd, $\omega_{\C}$ is also a cycle of order $2$. Hence condition $(2)$ of Lemma \ref{Percorso1bis} is satisfied.
\end{proof}
\begin{thm}\label{CDY4}
Let $v=2nk+1$, $k=4t+3$ and let $n\equiv 1 \pmod{4}$ be such that $n\geq (7k+1)/3$. Moreover, if $n\equiv 0\pmod{3}$, we also assume that $k\equiv 7\pmod{12}$. Then the number of distinct simple $k$-gonal biembeddings of $K_v$ is, at least, of:
$$2(n-2)\binom{n}{2}[\mathcal{H}(t-2)]^2\approx \frac{n^3[(t-2)!]^2}{e^2}.$$
Also, for all such embeddings and all $g\in \mathbb{Z}_v$, $\tau_g$ is an orientation preserving automorphism.
\end{thm}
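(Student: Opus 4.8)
The plan is to reproduce, almost verbatim, the proof of Theorem \ref{CDY2}, with Proposition \ref{solutionPrime} replaced by Proposition \ref{solutionGeneral}: the latter requires neither $n$ prime nor $n>8k$, so it applies on the whole range of parameters allowed by Theorem \ref{CDY}, at the price of producing only $2\binom{n}{2}$ solutions of $P(A)$ per array instead of exponentially many.

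First I would take the family $\mathcal{F}_{n,k}=\{A_i:\ i\in\mathcal{A}_{n,k}\}$ of globally simple Heffter arrays $\H(n;k)$ constructed by Cavenagh, Donovan and Yaz\i c\i\ in \cite{CDDYbiem}; by Theorem \ref{CDY} we have $|\mathcal{F}_{n,k}|\geq (n-2)[\mathcal{H}(t-2)]^2$, and all of these arrays share the same entries and the same skeleton. The crucial structural step is to check that each $A_i$ --- up to a cyclic relabelling of the diagonals placing the filled ones at $D_1,D_2,\dots$ --- has the pattern $D_1,D_2,\dots,D_i,D_{i+s_1},D_{i+s_1+2},\dots,D_{k+s_1}$ required by Proposition \ref{solutionGeneral} for a suitable offset $s_1$, and that $\gcd(n,2)=\gcd(n,s_1)=\gcd(n,k+s_1-1)=1$. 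Here $\gcd(n,2)=1$ is automatic from $n\equiv1\pmod4$, and the other two conditions are precisely what forces the number-theoretic restrictions on $n$ and $k$ in the statement (notably the clause ``if $n\equiv0\pmod3$ then $k\equiv7\pmod{12}$'', which is what lets one position the empty strips so that $s_1$ and $k+s_1-1$ dodge the prime $3$).

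Granting this, Proposition \ref{solutionGeneral} gives at least $2\binom{n}{2}$ solutions of $P(A_i)$ for every $i$, and by Corollary \ref{preprecedente} each solution yields a $k$-gonal biembedding of $K_v$, which is simple since the $A_i$ are globally simple. To count these without repetition I would invoke Corollary \ref{differentEmbeddings2}, exactly as in Theorem \ref{CDY2}: the $A_i$ have equal entries and skeleton and pairwise agree on at least one (in fact many) non-empty diagonals, so distinct pairs $\bigl(A_i,(\R,\C)\bigr)$ determine distinct biembeddings. Multiplying the two bounds gives at least $2(n-2)\binom{n}{2}[\mathcal{H}(t-2)]^2$ distinct simple $k$-gonal biembeddings of $K_v$; that every translation $\tau_g$ is orientation preserving follows, as in Theorem \ref{CDY2}, from the shape (\ref{ArchRho}) of the Archdeacon rotation. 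Finally the displayed asymptotics come from $\binom{n}{2}\approx n^2/2$ together with (\ref{Stirling}) and (\ref{Derangements}), which give $\mathcal{H}(t-2)\approx (t-2)!/e$ and hence $2(n-2)\binom{n}{2}[\mathcal{H}(t-2)]^2\approx n^3[(t-2)!]^2/e^2$.

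The only real difficulty I foresee is this structural verification: one must open up the construction of \cite{CDDYbiem}, locate its empty diagonals, and confirm there is enough freedom --- in repositioning the empty strips, or in which instance of the construction is used --- to make $\gcd(n,s_1)=\gcd(n,k+s_1-1)=1$ hold under exactly the congruence hypotheses listed, with the prime $2$ handled by $n\equiv1\pmod4$ and the prime $3$ by the mod-$12$ restriction on $k$. Everything else --- the per-array solution count, the distinctness argument via Corollary \ref{differentEmbeddings2}, the asymptotic estimates, and the $\tau_g$ claim --- is routine and mirrors Theorems \ref{CDY2} and \ref{CDY3} step by step.
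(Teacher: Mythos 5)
Your proposal matches the paper's proof, which is exactly the one-line reduction you describe: apply Proposition \ref{solutionGeneral} in place of Proposition \ref{solutionPrime} and then repeat the argument of Theorem \ref{CDY2} (per-array solution count, distinctness via Corollary \ref{differentEmbeddings2}, the $\tau_g$ claim, and the asymptotics). The structural verification you flag --- that the Cavenagh--Donovan--Yaz\i c\i\ arrays have the diagonal pattern and gcd conditions required by Proposition \ref{solutionGeneral} --- is indeed the only substantive point, and the paper likewise delegates it to the construction in \cite{CDDYbiem} without spelling it out.
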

\begin{proof}
The thesis follows from Proposition \ref{solutionGeneral} reasoning as in the proof of Theorem \ref{CDY2}.
\end{proof}
From Proposition \ref{upperboundFamily}, it follows that:
\begin{thm}\label{CDY5}
Let $v=2nk+1$, $k=4t+3$ and let $n\equiv 1 \pmod{4}$ be such that $n\geq (7k+1)/3$. Moreover, if $n\equiv 0\pmod{3}$, we also assume that $k\equiv 7\pmod{12}$. Then the number of non-isomorphic simple $k$-gonal biembeddings of $K_v$ is, at least, of:
$$\frac{n-2}{(2nk)^2}\binom{n}{2}[\mathcal{H}(t-2)]^2\approx \frac{n[(t-2)!]^2}{8((4t+3)e)^2}\approx v \cdot k^{k/2+o(k)}.$$
\end{thm}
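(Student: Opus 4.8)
The plan is to argue exactly as in the proofs of Theorems \ref{GeneralBound} and \ref{CDY3}, using Theorem \ref{CDY4} together with Proposition \ref{upperboundFamily}. Theorem \ref{CDY4} supplies, for the parameters in the statement, a set of at least $2(n-2)\binom{n}{2}[\mathcal{H}(t-2)]^2$ pairwise distinct simple $k$-gonal biembeddings of $K_v$. By construction (Corollary \ref{preprecedente} applied to the Cavenagh--Donovan--Yaz\i c\i\ arrays, with the Crazy Knight's Tour solutions of Proposition \ref{solutionGeneral}) these are $\Z_v$-regular embeddings of $K_v=K_{v\times 1}$, and Theorem \ref{CDY4} guarantees that for each of them every translation $\tau_g$ is an orientation preserving automorphism. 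First I would partition this set into classes of mutually isomorphic embeddings.

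Next I would bound the size of each class. Each class is a family of $\Z_v$-regular, pairwise distinct, mutually isomorphic embeddings of $K_{v\times 1}$ in which all translations are orientation preserving, so the second inequality of Proposition \ref{upperboundFamily} applies with $m=v$, $t=1$. Since $|N(K_{v\times 1},0)|=v-1=2nk$, every class contains at most $2(2nk)^2$ embeddings. Dividing, the number of classes --- that is, the number of non-isomorphic simple $k$-gonal biembeddings of $K_v$ --- is at least
$$\frac{2(n-2)\binom{n}{2}[\mathcal{H}(t-2)]^2}{2(2nk)^2}=\frac{n-2}{(2nk)^2}\binom{n}{2}[\mathcal{H}(t-2)]^2.$$

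Finally I would record the asymptotics. Using $\binom{n}{2}\sim n^2/2$, $n-2\sim n$ and the approximation $\mathcal{H}(t-2)\sim (t-2)!/e$ of (\ref{Derangements}), the last quantity is asymptotic to $\frac{n[(t-2)!]^2}{8k^2e^2}=\frac{n[(t-2)!]^2}{8((4t+3)e)^2}$. Then, substituting $t=(k-3)/4$, applying Stirling's formula (\ref{Stirling}) to $(t-2)!$ and using $v=2nk+1\sim 2nk$ so that $n\sim v/(2k)$, the factorial contributes $k^{k/2}$ up to factors of the form $a^{\pm\Theta(k)}$ with $a$ a constant, which are $k^{o(k)}$, while all remaining powers of $n$ and $k$ are absorbed into the leading factor $v$; this yields the claimed form $v\cdot k^{k/2+o(k)}$.

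I do not expect a genuine obstacle here: every ingredient --- the count of distinct biembeddings, their $\Z_v$-regularity, the orientation preserving translations, and the bound on the size of an isomorphism class --- is already in place, so the argument is essentially the division step plus bookkeeping. The only mildly delicate point is checking that the constant-base exponentials $4^{-k/2}$ and $e^{-\Theta(k)}$ produced by Stirling really are $k^{o(k)}$, which is immediate since $a^{-k}=k^{-(\ln a/\ln k)k}$ and $\ln a/\ln k\to 0$.
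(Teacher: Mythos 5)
Your argument is correct and is exactly the paper's route: the theorem is obtained by applying the second bound of Proposition \ref{upperboundFamily} (class size at most $2(v-1)^2=2(2nk)^2$, using that all translations are orientation preserving) to the $2(n-2)\binom{n}{2}[\mathcal{H}(t-2)]^2$ distinct embeddings of Theorem \ref{CDY4}, then dividing and carrying out the Stirling/derangement asymptotics. Nothing is missing.
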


\section{Embeddings from cyclically $k$-diagonal Heffter arrays}
We note that the bounds obtained in Theorems \ref{CDY3} and \ref{CDY5} grow more than exponentially in $k$ but they have some restrictions on the considered values of $n$. Furthermore, they grow exponentially in $n$ only when $n$ is a prime. For this reason, in this section, we will provide lower bounds that grow exponentially in $n$ on the number of $k$-gonal biembeddings not only of complete graphs but also of complete multipartite graphs.

First of all, we need to recall the following existence result reported in \cite{MP3} (see Corollaries 3.4 and 3.6) on cyclically $k$-diagonal Heffter arrays.
\begin{lem}\label{ExistenceDiagonal}
Given $n\geq k\geq 3$, then there exists a cyclically $k$-diagonal Heffter array $H_t(n;k)$ in each of the following cases:
\begin{itemize}
\item[(1)] $t\in \{1,2\}$ and $k\equiv 0\pmod{4}$ \cite{ADDY, MP};
\item[(2)] $t\in \{1,2\}$, $k\equiv 1\pmod{4}$ and $n\equiv 3\pmod{4}$  \cite{CMPPHeffter,DW};
\item[(3)] $t\in \{1,2\}$, $k\equiv 3\pmod{4}$ and $n\equiv 0,1\pmod{4}$ \cite{ADDY};
\item[(4)] $t=k$, $k\equiv 1\pmod{4}$ and $n\equiv 3\pmod{4}$ \cite{RelH};
\item[(5)] $t=k$, $k\equiv 3\pmod{4}$ and $n\equiv 0,3\pmod{4}$ \cite{RelH};
\item[(6)] $t\in \{n,2n\}$, $k=3$ and $n$ is odd \cite{CPPBiembeddings}.
\end{itemize}
\end{lem}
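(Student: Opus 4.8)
The plan is to establish the lemma case by case, in each instance invoking an explicit construction already present in the literature and then checking that the array it produces has all of its filled cells on $k$ \emph{consecutive} diagonals (up to a cyclic relabelling of the rows and columns, an operation that preserves both the Heffter property and the vanishing of the row and column sums). A convenient unified reference for cases (1)--(5) is provided by Corollaries 3.4 and 3.6 of \cite{MP3}, so much of the work consists in recording which primary source supplies each case and under which congruence restrictions.

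Concretely, for $t\in\{1,2\}$ I would appeal to the known constructions of (relative) square integer Heffter arrays with empty cells: \cite{ADDY,MP} when $k\equiv 0\pmod 4$ (case (1)), \cite{CMPPHeffter,DW} when $k\equiv 1\pmod 4$ and $n\equiv 3\pmod 4$ (case (2)), and \cite{ADDY} when $k\equiv 3\pmod 4$ and $n\equiv 0,1\pmod 4$ (case (3)); in each of these the nonempty positions form, by design, a block of $k$ diagonals that may be taken consecutive. For $t=k$ (cases (4) and (5)) I would use the relative Heffter arrays built in \cite{RelH}, which are assembled directly on $k$ consecutive diagonals of an $n\times n$ grid, the hypotheses $k\equiv 1\pmod 4$, $n\equiv 3\pmod 4$ and $k\equiv 3\pmod 4$, $n\equiv 0,3\pmod 4$ being precisely those under which the prescribed sums vanish in $\Z_{k(2n+1)}$. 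Finally, case (6) is the cyclically $3$-diagonal relative Heffter array $\H_t(n;3)$ with $t\in\{n,2n\}$ exhibited in \cite{CPPBiembeddings} for every odd $n$.

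The only genuine difficulty, and hence the step I expect to be the main obstacle, is the bookkeeping needed to reconcile the differing conventions of the cited papers: some phrase their output as ``$k$-diagonal arrays with a prescribed width'', some present it as a difference family or a current graph rather than as an array, and some place the nonempty diagonals in a shifted position. One therefore has to verify in each case that, after the appropriate cyclic shift, the nonempty cells are exactly those of $k$ consecutive diagonals and that the congruence conditions on $n$ listed in the statement are exactly the ones under which the relevant construction is valid. Once this translation has been carried out, no further argument is required, since the lemma is simply the union of the six verified cases.
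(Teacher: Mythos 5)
Your proposal matches the paper's own treatment: the lemma is stated there purely as a compilation of known constructions, cited via Corollaries 3.4 and 3.6 of \cite{MP3} together with the primary sources \cite{ADDY,MP,CMPPHeffter,DW,RelH,CPPBiembeddings}, and no further argument is given beyond the observation that those constructions are (up to relabelling) supported on $k$ consecutive diagonals. Your case-by-case citation plan, including the bookkeeping about shifted diagonals, is exactly the verification the paper implicitly relies on.
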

Moreover, in \cite{CMPPHeffter} and in \cite{CPPBiembeddings}, it is also proved the following existence result on globally simple cyclically $k$-diagonal Heffter arrays.
\begin{lem}\label{ExistenceDiagonalGS}
Given $n\geq k\geq 3$, then there exists a globally simple, cyclically $k$-diagonal Heffter array $H_t(n;k)$ in each of the following cases:
\begin{itemize}
\item[(1)] $t\in \{1,2\}$, $k\in \{3,5,7,9\}$ and $nk\equiv 3 \pmod{4}$;
\item[(2)] $t=k$, $k\in \{3,5,7,9\}$ and $n\equiv 3 \pmod{4}$;
\item[(3)] $t\in \{n,2n\}$, $k=3$ and $n$ is odd.
\end{itemize}
\end{lem}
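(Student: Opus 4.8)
The plan is to derive every case from the existence results already recorded in Lemma \ref{ExistenceDiagonal}: the only genuinely new requirement of the present statement is that, in the listed ranges, the cyclically $k$-diagonal Heffter arrays can be chosen \emph{globally} simple.

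First I would dispose of $k=3$, where global simplicity is automatic, as already noted before the existence theorem for $\H(n;k)$. Indeed, in a row or column of an $\H_t(n;3)$ the only proper consecutive subsequences have one or two entries; a single entry is a nonzero element of $\Z_v$, and if two consecutive entries summed to $0$ then the third entry of that line would have to be $0$, which is impossible. Hence the $k=3$ instances of items (1), (2) and (3) follow at once from parts (3), (5) and (6) of Lemma \ref{ExistenceDiagonal}, respectively.

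It then remains to handle $k\in\{5,7,9\}$ for $t\in\{1,2\}$ (item (1)) and for $t=k$ (item (2)); in both ranges the hypotheses on $n$ fall under those for which parts (2)--(3) (for $t\in\{1,2\}$), resp. (4)--(5) (for $t=k$), of Lemma \ref{ExistenceDiagonal} already supply a cyclically $k$-diagonal $\H_t(n;k)$. I would produce such an array explicitly, prescribing on each of the $k$ filled diagonals a list of entries forming an arithmetic progression, with the common differences alternating in sign across consecutive diagonals, and normalising these progressions so that (a) their entries, read up to sign, form a transversal of $\Z_v\setminus J$ with $v=2nk+t$, and (b) the $k$ entries seen by any fixed row, and by any fixed column, telescope to $0$ as the cyclic window of diagonals slides. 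Point (b) is where the congruences ($nk\equiv 3\pmod 4$, resp. $n\equiv 3\pmod 4$) enter: they force the parities needed for the alternating differences to cancel around the window. Verifying (a) and (b) is exactly the computation carried out in \cite{CMPPHeffter} for $t\in\{1,2\}$ and in \cite{CPPBiembeddings} for $t=k$, and the finitely many small $n$ (those close to $k$) missed by the generic pattern are checked directly.

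Finally I would verify that these arrays are globally simple. Each row and column is a sequence of length $k\le 9$, so only $O(k^2)$ proper consecutive subsequences need to be tested, and the sum of each is an explicit short integer combination of the progression parameters; bounding its least positive representative strictly between $0$ and $v=2nk+t$ shows it is nonzero in $\Z_v$, and any single line producing a ``bad'' partial sum can be reversed, which preserves both the Heffter property and the simplicity of the other lines. The hard part will be precisely this step for $k=7$ and $k=9$: unlike the regime $k\le 4$, a proper consecutive block may now have three or four entries and could a priori vanish, so the diagonal progressions must be chosen with enough spread that no such block is a multiple of $2nk+t$, \emph{uniformly} in $n$, while one simultaneously keeps the zero row and column sums and the exact coverage of $\Z_v\setminus J$. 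It is this balancing act that confines the statement to $k\in\{3,5,7,9\}$ rather than to all odd $k$.
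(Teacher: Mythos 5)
Your proposal coincides in substance with what the paper does: Lemma \ref{ExistenceDiagonalGS} is not proved there but simply imported from \cite{CMPPHeffter} (for $t\in\{1,2\}$) and \cite{CPPBiembeddings} (for $t=k$ and $t\in\{n,2n\}$), which is exactly where you defer the $k\in\{5,7,9\}$ verifications, and your self-contained treatment of $k=3$ --- automatic global simplicity of any $\H_t(n;3)$ combined with parts (3), (5) and (6) of Lemma \ref{ExistenceDiagonal} --- is correct. The only caveat is that your description of the diagonal fillings as signed arithmetic progressions is an idealisation of the more case-specific constructions actually given in those references, but since you explicitly send the reader there for the computation this does not affect correctness.
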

The goal will be now to find an exponential family of solutions of $P(A)$ where $A$ is one of those arrays and then to proceed by using the following remark.
\begin{rem}\label{NonIsoSol}
Let us assume we have $M$ distinct solutions of $P(A)$ where $A$ is a given (globally simple) cyclically $k$-diagonal Heffter array $\H_t(n;k)$ and $k$ is an odd integer. In this case we may assume, without loss of generality, that the filled diagonals are $D_1,D_2,\dots,D_{(k+1)/2}$ and $D_{n},D_{n-1},\dots,D_{n-(k-3)/2}$. Then, if we consider $A^t$, we have that $skel(A)=skel(A^t)$ and hence any solution of $P(A)$ is also a solution of $P(A^t)$. Moreover, $A$ and $A^t$ coincide on $D_1$ and $\E(A)=\E(A^t)$. Therefore, due to Corollary \ref{differentEmbeddings2}, there are at least $2M$ distinct $\Z_{2nk+t}$-regular (simple) $k$-gonal biembeddings of $K_{\frac{2nk+t}{t}\times t}$.
Then, because of Proposition \ref{upperboundFamily}, the number of non-isomorphic (simple) $k$-gonal biembeddings of $K_{\frac{2nk+t}{t}\times t}$ is, at least, of $\frac{M}{(2nk)^2}.$
\end{rem}

For a cyclically $k$-diagonal array $A$, we recall a characterization, provided in \cite{CDP}, of the solutions of $P(A)$ that have vector $\R=(1,\dots,1)$.

We consider here a cyclically $k$-diagonal array of size $n> k$ and vectors $\R=(1,\dots,1)$ and $\C\in \{-1,1\}^n$, whose $-1$ are in positions $E=(e_1,\dots,e_r)$
where $e_1<e_2<\dots<e_r$. We note that, given $e\in E$, there exists a minimum $m\geq 1$ such that $e-m(k-1)\equiv e''\pmod{n}$ for some $e''\in E$.
We define the permutation $\omega_{1,\C}$ on $E$ as $\omega_{1,\C}(e)=e''$.
Finally we define the permutation $\omega_{2,C}$ on $E=(e_1,\ldots,e_r)$ as $\omega_{2,C}(e_i)=e_{i+(k-1)}$ where the indices are considered modulo $r$. Then, in \cite{CDP}, it is proven that:

\begin{lem}\label{Percorso2}
Let $k\geq3$ be an odd integer and let $A$ be a cyclically $k$-diagonal array of size $n> k$.
Then the vectors $\R:=(1,\dots,1)$ and $\C\in \{ -1,1\}^n$, whose $-1$ are in positions $E=(e_1,\dots,e_r)$
where $e_1<e_2<\dots<e_r$, are a solution of $P(A)$ if and only if:
\begin{itemize}
\item[(1)] the list $E$ covers all the congruence classes modulo $d$, where $d=\gcd(n,k-1)$;
\item[(2)] the permutation $\omega_{2,\C}\circ \omega_{1,\C}$ on $E$ is a cycle of order $r=|E|$.
\end{itemize}
\end{lem}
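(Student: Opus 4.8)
The plan is to study directly the permutation $S_{\R,\C}$ of the $nk$ filled cells of $A$ induced by $\R=(1,\dots,1)$ and $\C$; by Remark \ref{RCordinering}, $(\R,\C)$ is a solution of $P(A)$ exactly when $S_{\R,\C}$ is a single $nk$-cycle. Up to a cyclic relabelling of the diagonals we may assume the filled diagonals are $D_1,\dots,D_k$, so that $D_1$ is the main diagonal and the cells of $\mathcal E:=\{(e,e):e\in E\}$ lie on $D_1$. With every row traversed left-to-right, row $i$ has filled columns $i-k+1,\dots,i$ and column $j$ has filled rows $j,\dots,j+k-1$, so each application of $S_{\R,\C}$ is an explicit elementary move.

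The first step is the \emph{diagonal phase}. A direct check shows that, for $(i,i)\in D_1$, the row-step wraps to $(i,i-k+1)\in D_k$, and the following column-step, performed in column $i-k+1$, wraps back to $(i-k+1,i-k+1)\in D_1$ when $i-k+1\notin E$ (i.e.\ $c_{i-k+1}=+1$) but moves one cell up, to $(i-1,i-k+1)\in D_{k-1}$, when $i-k+1\in E$. Hence the $S_{\R,\C}$-orbit of a cell of $D_1$ stays on $D_1$, stepping by $-(k-1)$ each time, until the first moment it uses a column indexed by an element of $E$; by the very definition of $\omega_{1,\C}$, starting from $(e,e)$ with $e\in E$ this brings the path to $(e''+k-1,e''+k-1)$ with $e''=\omega_{1,\C}(e)$, from which it leaves $D_1$. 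The second step is the \emph{winding phase} that follows: tracing the unit-step moves just as in steps [1]--[6] of the proof of Proposition \ref{solutionPrime}, but now with no empty strip so that every ``turn'' occurs at a column of $E$, one checks that the path sweeps the consecutive columns $e'',e''+1,\dots$, turns at each column of $E$ it meets, crosses all the intermediate diagonals with unit steps, and --- because $k$ is odd --- returns to $D_1$ precisely at the cell $(e',e')$ with $e'=\gamma^{k-1}(e'')=\omega_{2,\C}(e'')$, where $\gamma=(e_1,\dots,e_r)$ is the cyclic permutation of $E$, and without meeting any other cell of $\mathcal E$ in between; meanwhile it visits, each exactly once, every off-diagonal filled cell lying in the block of columns swept during this excursion. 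Consequently the first-return map of $S_{\R,\C}$ to $\mathcal E$ is exactly $\omega_{2,\C}\circ\omega_{1,\C}$, and the cells strictly between two consecutive $\mathcal E$-visits are the $D_1$-cells of a backward $(k-1)$-chain together with all off-diagonal cells of the corresponding block of columns.

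It then remains to assemble the equivalence. A backward $(k-1)$-chain on $D_1$ is a coset of the subgroup $\langle k-1\rangle$ of $\Z_n$, and since every off-diagonal cell lies on some excursion, which begins and ends on $D_1$, every orbit of $S_{\R,\C}$ meets $D_1$; an orbit meeting $D_1$ but avoiding $\mathcal E$ must consist exactly of the cells $(i,i)$ with $i$ in a fixed coset $c+d\,\Z_n$, $d=\gcd(n,k-1)$, that contains no element of $E$. Therefore, if $E$ meets every residue class modulo $d$ --- condition (1) --- every orbit of $S_{\R,\C}$ meets $\mathcal E$, the orbits of $S_{\R,\C}$ are in bijection with those of $\omega_{2,\C}\circ\omega_{1,\C}$ on $E$, and $S_{\R,\C}$ is a single $nk$-cycle if and only if $\omega_{2,\C}\circ\omega_{1,\C}$ is a single $r$-cycle --- condition (2). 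Conversely, if $E$ omits some residue modulo $d$, the corresponding $(n/d)$-element subset of $D_1$ is a non-empty proper $S_{\R,\C}$-invariant set (proper because $nk>n$ forces off-diagonal cells to exist), so $S_{\R,\C}$ is not an $nk$-cycle; thus (1) is necessary. Combining the two directions gives the claim.

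The step I expect to be the main obstacle is the bookkeeping of the winding phase: keeping precise track of the diagonal and the column after each unit step, confirming that the move at every column of $E$ behaves as described, that each off-diagonal cell of the swept block is hit exactly once, and that the cyclic wraparound of the diagonals (absent in the non-cyclic Lemma \ref{Percorso1bis}) produces no spurious short orbits. The hypothesis ``$k$ odd'' is used exactly at the turns, as in steps [4] and [6] of the proof of Proposition \ref{solutionPrime}, to guarantee that an excursion returns to $D_1$ rather than being trapped on a diagonal of the wrong parity; and conditions (1) and (2) have to be stated separately because (2) governs only the orbits reaching $\mathcal E$, whereas (1) is precisely what rules out the ``parallel-track'' orbits confined to $D_1$.
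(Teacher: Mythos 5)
First, a caveat: the paper does not prove Lemma \ref{Percorso2} at all --- it is quoted from \cite{CDP} --- so there is no in-paper proof to compare against. Your strategy is the natural direct one and its skeleton is sound: reduce everything to the first-return map of $S_{\R,\C}$ on $\mathcal{E}=\{(e,e):e\in E\}$; identify that map with $\omega_{2,\C}\circ\omega_{1,\C}$ by splitting each excursion into the diagonal phase (steps of $-(k-1)$ along $D_1$, governed by $\omega_{1,\C}$) and the winding phase (unit column increments during which the ``level'' $s$, defined by the current cell $(j+s,j)$, drops by $2$ at each column of $E$ and resets $1\mapsto k-1$, so that $k$ odd forces $s=0$ exactly after $k-1$ encounters with $E$, i.e.\ at $\omega_{2,\C}(e'')$); and use condition (1) to exclude the orbits confined to $D_1$, which are exactly the cosets of $\gcd(n,k-1)\Z_n$ disjoint from $E$. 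All of that checks out.

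There is, however, one false assertion in the winding phase: the claim that a single excursion ``visits, each exactly once, every off-diagonal filled cell lying in the block of columns swept''. An excursion visits exactly \emph{one} cell per swept column (the cell at the current level $s$), while each column carries $k-1$ off-diagonal filled cells; the off-diagonal cells of a column are exhausted only in aggregate, because that column is swept by $k-1$ different winding phases, once at each level $s\in\{1,\dots,k-1\}$ (consistently, the $r$ winding phases together account for $(k-1)n$ off-diagonal cells). This is not merely cosmetic, because you invoke that assertion to justify the load-bearing claim that every $S_{\R,\C}$-orbit meets $D_1$, which is what makes the bijection between cycles of $S_{\R,\C}$ and cycles of $\omega_{2,\C}\circ\omega_{1,\C}$ legitimate. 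The claim is true, but it needs the direct argument: from any off-diagonal cell the forward dynamics increments the column at every step, hence (as soon as $E\neq\emptyset$) keeps meeting columns of $E$, and the level sequence $s\mapsto s-2$ with reset $1\mapsto k-1$ must reach $0$ since $k-1$ is even; so every off-diagonal cell reaches $\mathcal{E}$ in forward time, and a forward orbit of a permutation that reaches a set contains it in its cycle. (The degenerate case $E=\emptyset$ is consistent: condition (1) fails and $D_1$ is invariant.) With that one repair your proof is complete.
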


\begin{prop}\label{3diag}
Let $A$ be a cyclically $3$-diagonal Heffter array $\H_t(n;3)$ where $n\geq 3$ is an odd integer. Then the number of distinct solutions of $P(A)$ is, at least, of $2^{\frac{1}{2}n+2}$.
\end{prop}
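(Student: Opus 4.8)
The plan is to build a large family of solutions of $P(A)$ with row vector $\R=(1,\dots,1)$, indexed by the choice of the set $E$ of positions of the $-1$'s of the column vector $\C$, and then to multiply this family by four using Lemmas~\ref{-R-C} and~\ref{CR}. (Only the skeleton of $A$ enters $P(A)$, so the Heffter conditions play no role in the count.) Since $A$ is cyclically $3$-diagonal and $n$ is odd, in Lemma~\ref{Percorso2} we have $d=\gcd(n,k-1)=\gcd(n,2)=1$, so condition~$(1)$ of that lemma is satisfied by every non-empty $E$. Thus everything reduces to producing exponentially many $E$ for which $\omega_{2,\C}\circ\omega_{1,\C}$ is a cycle of order $r=|E|$.

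For this I would take $E$ to be an arbitrary non-empty subset of the $(n+1)/2$ \emph{odd} integers in $[1,n]$. The key remark is that, as $n$ is odd, the permutation ``subtract $2$ modulo $n$'' of $[1,n]$ is the single $n$-cycle $(n,n-2,\dots,3,1,\;n-1,n-3,\dots,4,2)$, which sweeps through all odd residues first, in decreasing order. Hence, when $E$ consists of odd integers, the induced cyclic order of $E$ inside this $n$-cycle is exactly the decreasing order on $E$; that is, $\omega_{1,\C}=\gamma^{-1}$, where $\gamma=(e_1,e_2,\dots,e_r)$ is the cycle listing $E$ in increasing order. Since $\omega_{2,\C}=\gamma^{k-1}=\gamma^{2}$ and $k=3$, we get $\omega_{2,\C}\circ\omega_{1,\C}=\gamma^{2}\circ\gamma^{-1}=\gamma$, which is an $r$-cycle. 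So each of the $2^{(n+1)/2}-1$ non-empty odd subsets $E$ gives, via Lemma~\ref{Percorso2}, a solution $(\R,\C_E)$ with $\R=(1,\dots,1)$, and distinct $E$ yield distinct solutions. (For $n=k=3$ Lemma~\ref{Percorso2} is not literally stated, but a direct inspection shows that each of the three non-empty subsets of $\{1,3\}$ still solves $P(A)$, so the argument is unaffected.)

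Let $\mathcal{S}_0$ be this family, $|\mathcal{S}_0|=2^{(n+1)/2}-1$. Every element of $\mathcal{S}_0$ has row vector $(1,\dots,1)$, so Lemma~\ref{CR} yields the family $\mathcal{S}_1=\{(\C_E,(1,\dots,1))\}$, and Lemma~\ref{-R-C} applied to $\mathcal{S}_0$ and to $\mathcal{S}_1$ yields $\mathcal{S}_2=\{((-1,\dots,-1),-\C_E)\}$ and $\mathcal{S}_3=\{(-\C_E,(-1,\dots,-1))\}$. These four families are pairwise disjoint: the row vector is $(1,\dots,1)$ on $\mathcal{S}_0$, is $(-1,\dots,-1)$ on $\mathcal{S}_2$, and equals $\pm\C_E$ on $\mathcal{S}_1\cup\mathcal{S}_3$, which is neither constant vector because $\emptyset\ne E\subsetneq[1,n]$ (here $n\ge 3$ is used, so both parities occur among the positions); and $\mathcal{S}_1$ is told apart from $\mathcal{S}_3$ by its column vector $(1,\dots,1)$ versus $(-1,\dots,-1)$. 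Inside each family $E$ is recovered from the solution, so altogether we obtain $4\bigl(2^{(n+1)/2}-1\bigr)$ distinct solutions of $P(A)$.

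A one-line estimate finishes it: $4\bigl(2^{(n+1)/2}-1\bigr)=2^{(n+5)/2}-4\ge 2^{n/2+2}$ holds for every odd $n\ge 3$ (it is equivalent to $2^{n/2}\ge\sqrt{2}+1$). I expect no real obstacle beyond the single computation $\omega_{1,\C}=\gamma^{-1}$ for all-odd sign sets; the remainder is bookkeeping, the only point needing care being the disjointness of the four families above. (One could also throw the all-even subsets into $\mathcal{S}_0$, and in fact $P(A)$ has strictly more solutions than counted here — e.g.\ certain mixed-parity $E$ work too — so the constant $+2$ in the exponent is deliberately a clean underestimate.)
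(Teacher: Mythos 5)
Your proof is correct and follows essentially the same route as the paper: restrict $E$ to subsets of the odd residues so that $\omega_{1,\C}=\gamma^{-1}$ and $\omega_{2,\C}=\gamma^{2}$, conclude via Lemma~\ref{Percorso2} that $\omega_{2,\C}\circ\omega_{1,\C}=\gamma$ is an $r$-cycle, and then quadruple the count using Lemmas~\ref{-R-C} and~\ref{CR}. Your extra care about excluding $E=\emptyset$, the disjointness of the four families, and the degenerate case $n=k=3$ only tightens bookkeeping the paper leaves implicit.
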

\begin{proof}
Let us consider a subset $E=(e_1,\dots,e_r)$
of $[1,n]$ where the elements $e_1,\dots,e_r$ are odd integers such that $e_1<e_2<\dots<e_r$.

We note that the set $O$ of odd elements in $[1,n]$ has cardinality $\frac{n+1}{2}>\frac{1}{2}n.$ The number of subsets of $O$ is then at least of $2^{\frac{1}{2}n}$. It follows that the number of possible choices for $E$ is, at least, of $ 2^{\frac{1}{2}n}$. Hence, in order to obtain the thesis, it suffices to prove that, set $\R=(1,1,\dots,1)$ and $\C_E\in \{ -1,1\}^n$ whose $-1$ are in positions $E=(e_1,\dots,e_r)$, $(\R,\C_E)$ is a solution for $P(A)$. Indeed, due to Lemmas \ref{-R-C} and \ref{CR}, the number of distinct solutions of $P(A)$ would be, at least, of $2^{\frac{1}{2}n+2}$.

Here we denote by $\gamma$ the cyclic permutation of the elements of $E$ defined by $(e_1,\dots,e_r)$.
In this case, since $k-1=2$ we have that $\omega_{2,\C}=\gamma^2$. Similarly, since the elements of $E$ are all odd integers, $\omega_{1,\C}=\gamma^{-1}$. It follows that $\omega_{2,\C}\circ \omega_{1,\C}=\gamma.$
Since $n$ is odd, we also have that $d=\gcd(n,2)=1$ and hence both the conditions of Lemma \ref{Percorso2} are satisfied and $(\R,\C_E)$ is a solution of $P(A)$.
\end{proof}
We can also provide a similar construction for arbitrary odd $k$. In this case, we still obtain an exponential lower bound to the number of solutions of $P(A)$ but, here, if we consider $k=3$, the exponent is worse than that of Proposition \ref{3diag}.

\begin{prop}\label{power2}
Let $A$ be a cyclically $k$-diagonal Heffter array $\H_t(n;k)$ where $n\geq 4k-3$ and $k$ are odd integers such that $\gcd(n,k-1)=1$. Then the number of distinct solutions of $P(A)$ is, at least, of
$$ 4\binom{\lceil n/(k-1)\rceil}{ \lceil n/(4k-4)\rceil}\gtrsim \sqrt{\frac{ 2(k-1)}{3n\pi}}2^{\frac{n}{k-1}\cdot H(1/4)+3}.$$
\end{prop}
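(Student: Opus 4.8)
The plan is to follow the strategy of Proposition~\ref{3diag}, with the odd integers replaced by a suitable progression modulo $k-1$ and with Lemma~\ref{Percorso2} as the main tool. First note that, since $k$ is odd and $\gcd(n,k-1)=1$, the number $n$ is odd and $d:=\gcd(n,k-1)=1$, so condition~$(1)$ of Lemma~\ref{Percorso2} is automatically satisfied. I would set $\R=(1,\dots,1)$ and, as the set $E=\{e_1<\dots<e_r\}$ of positions of the $-1$'s of $\C$, take an $r$-subset of the set $A_0\subseteq[1,n]$ of the integers congruent to $1$ modulo $k-1$, where $r$ is chosen coprime with $k-2$ and so that $\binom{|A_0|}{r}\ge\binom{|A_0|}{\lceil n/(4k-4)\rceil}$; since $k-2$ is odd and $|A_0|=\lceil n/(k-1)\rceil$, the hypothesis $n\ge 4k-3$ makes such a choice possible (for instance via a prime in the range $[\,n/(4(k-1)),n/(2(k-1))\,]$, as in Proposition~\ref{solutionPrime}, or via a power of $2$). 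Then the number of admissible $E$ is at least $\binom{\lceil n/(k-1)\rceil}{\lceil n/(4k-4)\rceil}$.

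The core is to prove that each pair $(\R,\C_E)$ solves $P(A)$, i.e., by Lemma~\ref{Percorso2}, that $\omega_{2,\C_E}\circ\omega_{1,\C_E}$ is a cycle of order $r$. Writing $\gamma$ for the $r$-cycle $(e_1,\dots,e_r)$, the definition gives $\omega_{2,\C_E}=\gamma^{k-1}$, so it is enough to show $\omega_{1,\C_E}=\gamma^{-1}$: then $\omega_{2,\C_E}\circ\omega_{1,\C_E}=\gamma^{k-2}$, which is an $r$-cycle precisely because $\gcd(r,k-2)=1$. To identify $\omega_{1,\C_E}$, I would conjugate the shift $x\mapsto x-(k-1)$ of $\Z_n$ by multiplication by $(k-1)^{-1}$ (legitimate as $\gcd(n,k-1)=1$), which turns it into $x\mapsto x-1$; under this bijection $A_0$ becomes a block of $|A_0|<n$ consecutive residues, so the images of $e_1<\dots<e_r$ appear along that block in the same order. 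Hence, iterating $x\mapsto x-(k-1)$ from $e_i$, the first element of $E$ reached is exactly the cyclic predecessor $e_{i-1}$, that is $\omega_{1,\C_E}(e_i)=\gamma^{-1}(e_i)$. (For $i\ge 2$ one can see this more directly, since $e_i-(k-1),\dots,e_{i-1}$ lie in $[e_{i-1},e_i)\subseteq[1,n]$ and are $\equiv 1\pmod{k-1}$; it is only the case $i=1$, where the orbit wraps around modulo $n$, that genuinely requires the rescaling.)

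Finally I would count the solutions produced. From $(\R,\C_E)$, Lemma~\ref{-R-C} yields the solution $(-\R,-\C_E)$, and Lemma~\ref{CR}---applicable since $A$ is cyclically $k$-diagonal and $\R=(1,\dots,1)$---yields $(\C_E,\R)$ and hence also $(-\C_E,-\R)$. Their first coordinates $\R,-\R,\C_E,-\C_E$ are pairwise distinct (here one uses $\R\ne-\R$ and, since $1\le r<n/2$ with $n$ odd, that $\C_E\notin\{\R,-\R\}$ and $\C_E\ne-\C_E$), so these four solutions are distinct; moreover distinct $E$ give disjoint quadruples. Therefore $P(A)$ has at least $4\binom{\lceil n/(k-1)\rceil}{\lceil n/(4k-4)\rceil}$ solutions, and the asymptotic estimate follows from the approximation~(\ref{Entropy}) for binomial coefficients exactly as in Proposition~\ref{solutionPrime}.

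The main obstacle I expect is the verification of $\omega_{1,\C_E}=\gamma^{-1}$ at the minimal index $i=1$: there the orbit of $x\mapsto x-(k-1)$ necessarily wraps around modulo $n$ and leaves the progression $A_0$, and the conjugation by $(k-1)^{-1}$ is exactly what linearises the situation and shows that the wrap-around lands on $e_r$ rather than on some earlier $e_j$. A secondary, purely bookkeeping point is to pick $r$ simultaneously coprime with $k-2$ and inside the window that keeps the relevant binomial coefficient on the increasing side of its peak, which is where the hypothesis $n\ge 4k-3$ enters.
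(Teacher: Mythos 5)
Your proposal is correct and follows essentially the same route as the paper's proof: the same choice of $E$ inside the residue class $1 \pmod{k-1}$ with $|E|=r$ coprime to $k-2$, the same identification $\omega_{2,\C}\circ\omega_{1,\C}=\gamma^{k-1}\circ\gamma^{-1}=\gamma^{k-2}$ via Lemma \ref{Percorso2}, and the same factor of $4$ from Lemmas \ref{-R-C} and \ref{CR}. Your rescaling argument for $\omega_{1,\C_E}=\gamma^{-1}$ and the explicit distinctness check of the four solutions merely fill in details the paper leaves implicit.
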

\begin{proof}
Let us consider a subset $E=(e_1,\dots,e_r)$ of $[1,n]$, where $e_1<e_2<\dots<e_r$, that satisfies the following properties:
\begin{itemize}
\item[(1)] the elements $e_1,\dots,e_r$ of $E$ are integers equivalent to $1$ modulo $k-1$;
\item[(2)] $r=|E|$ is an integer coprime with $k-2$.
\end{itemize}
A set $E$ with such properties can be constructed as follows. Let $r$ be a prime in the range $[\frac{n}{4(k-1)}, \frac{n}{2(k-1)}]$ that exists because of Bertrand's postulate. Then we choose $r$ elements $e_1,\dots,e_r$ among the $\lceil n/(k-1)\rceil$ integers equivalent to $1$ modulo $k-1$ contained in $[1,n]$. The number of such choices is at least of
$$\binom{\lceil n/(k-1)\rceil}{r}\geq \binom{\lceil n/(k-1)\rceil}{ \lceil n/(4k-4)\rceil}.$$
Note that, due to the approximation for the binomial coefficients, see equation (\refeq{Entropy}), this number can be so approximated
$$ \binom{\lceil n/(k-1)\rceil}{ \lceil n/(4k-4)\rceil}\gtrsim \sqrt{\frac{ 8(k-1)}{3n\pi}}2^{\frac{n}{k-1}\cdot H(1/4)}.$$
Hence, also here, in order to obtain the thesis, it suffices to prove that, set $\R=(1,1,\dots,1)$ and $\C_{E}\in \{ -1,1\}^n$ whose $-1$ are in positions $E=(e_1,\dots,e_r)$, $(\R,\C_E)$ is a solution for $P(A)$. Indeed, due to Lemmas \ref{-R-C} and \ref{CR}, the number of distinct solutions of $P(A)$ would be, at least, of
$$ 4\binom{\lceil n/(k-1)\rceil}{ \lceil n/(4k-4)\rceil}\gtrsim \sqrt{\frac{ 2(k-1)}{3n\pi}}2^{\frac{n}{k-1}\cdot H(1/4)+3}.$$

Now we proceed as in the proof of Proposition \ref{3diag}. We denote by $\gamma$ the cyclic permutation of the elements of $E$ defined by $(e_1,\dots,e_r)$.
In this case we have that $\omega_{2,\C}=\gamma^{k-1}$. Similarly, since the elements of $E$ are all integers equivalent to $1$ modulo $k-1$, $\omega_{1,\C}=\gamma^{-1}$. It follows that $\omega_{2,\C}\circ \omega_{1,\C}=\gamma^{k-2}$ which is a cyclic permutation on $E$ of order $r$ because $r$ is coprime with $k-2$.
Since we have assumed that $d=\gcd(n,k-1)=1$, both the conditions of Lemma \ref{Percorso2} are satisfied and hence $(\R,\C_E)$ is a solution of $P(A)$.
\end{proof}

\begin{rem}As already noted in Remark \ref{MigliorareEsponente}, also here, if $n$ is sufficiently large and given $\lambda<1/2$, we can replace the exponent $\frac{n}{k-1}\cdot H(1/4)$ of the previous proposition with $\frac{n}{k-1}\cdot H(\lambda)$. However, also in this case, we believe it is better to write the statement in the ``clearest'' case.
\end{rem}

\begin{prop}\label{k7}
Let $A$ be a cyclically $7$-diagonal Heffter array $\H_t(n;7)$ where $n> 120$ is an odd integer. Then the number of distinct solutions of $P(A)$ is, at least, of
$$ 4\binom{\lfloor n/6\rfloor}{ \lfloor n/24\rfloor}\gtrsim \frac{1}{\sqrt{n\pi}}2^{\lfloor\frac{n}{6}\rfloor\cdot H(1/4)+4}.$$
\end{prop}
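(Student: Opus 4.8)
The plan is to produce the required solutions of $P(A)$ via the characterization of Lemma~\ref{Percorso2}, always with $\R=(1,\dots,1)$, splitting the argument according to $d:=\gcd(n,6)$, which for odd $n$ equals $1$ or $3$. If $3\nmid n$ then $d=1$, and since $n>120>4\cdot 7-3$ with $\gcd(n,6)=1$, Proposition~\ref{power2} applies verbatim for $k=7$ and already yields at least $4\binom{\lceil n/6\rceil}{\lceil n/24\rceil}$ distinct solutions; because $n$ is odd we have $6\nmid n$ and $24\nmid n$, hence $\lceil n/6\rceil-1=\lfloor n/6\rfloor$ and $\lceil n/24\rceil-1=\lfloor n/24\rfloor$, so Pascal's rule gives $\binom{\lceil n/6\rceil}{\lceil n/24\rceil}\ge\binom{\lceil n/6\rceil-1}{\lceil n/24\rceil-1}=\binom{\lfloor n/6\rfloor}{\lfloor n/24\rfloor}$ and this case is settled. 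So assume $3\mid n$, say $n=3m$ with $m$ odd, so $d=3$: now condition~(1) of Lemma~\ref{Percorso2} forces the position set $E$ of the $-1$'s of $\C$ to meet every residue class modulo $3$, which rules out the construction of Proposition~\ref{power2}, whose $E$ lies entirely in the class $1\bmod 3$.

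For $3\mid n$ I would take $E=B\cup\{u,w\}$, where $B$ is an arbitrary subset of the $\lfloor n/6\rfloor$ integers of the interval $[7,n]$ that are $\equiv1\pmod 6$, and $u:=3$, $w:=5$ are fixed; then $B$, $u$, $w$ cover the classes $1$, $0$, $2$ modulo $3$ respectively (so condition~(1) holds) and $u,w$ are the two smallest elements of $E$. Put $r:=|E|=|B|+2$ and let $\gamma$ be the natural-order $r$-cycle on $E$. Carrying out the ``step back by $k-1=6$'' analysis of Proposition~\ref{power2} inside the residue class $1\bmod 3$ --- which under addition of $6$ is a cyclic group of odd order $m$, with the integers $\equiv1\pmod 6$ occupying its ``even'' positions --- one checks that $\omega_{1,\C}$ acts as the cyclic predecessor on $B$ and fixes each of $u$ and $w$ (being the unique element of $E$ in its class mod $3$, so stepping back by $6$ sweeps that whole class and returns). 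Since $\omega_{2,\C}=\gamma^{k-1}=\gamma^6$ independently of this, writing $\tau:=\gamma\circ\omega_{1,\C}$ one finds $\tau=(u,w,b_1)$, the $3$-cycle on the three smallest elements of $E$; hence the permutation tested by condition~(2) is $\omega_{2,\C}\circ\omega_{1,\C}=\gamma^{k-2}\circ\tau=\gamma^{5}\circ(u,w,b_1)$.

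It remains to decide when $\gamma^{5}\circ(u,w,b_1)$ is a single $r$-cycle, and this is the heart of the matter: a direct cycle-trace shows that it is a full $r$-cycle precisely when $5^{-1}\bmod r<r/2$, i.e.\ when $r\equiv 2$ or $4\pmod 5$ (in particular $\gcd(r,5)=1$, so that $\gamma^{5}$ is itself an $r$-cycle). Since $|B|$ may be taken anywhere in $[\lfloor n/24\rfloor,\lfloor n/12\rfloor]$ without harming the count --- an interval of length $\gg 5$ once $n>120$ --- we choose it so that $r=|B|+2$ lies in the good residue class modulo $5$. Then $(\R,\C)$ solves $P(A)$; distinct choices of $B$ give distinct solutions; and Lemmas~\ref{-R-C} and~\ref{CR} (both applicable, as $A$ is cyclically $k$-diagonal and $\R=(1,\dots,1)$) multiply the count by $4$, the four solutions $(\R,\C),(-\R,-\C),(\C,\R),(-\C,-\R)$ being pairwise distinct because $\R\ne-\R$ and $\C$ has entries of both signs. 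As there are $\binom{\lfloor n/6\rfloor}{|B|}\ge\binom{\lfloor n/6\rfloor}{\lfloor n/24\rfloor}$ admissible $B$, we obtain at least $4\binom{\lfloor n/6\rfloor}{\lfloor n/24\rfloor}$ solutions of $P(A)$, and approximation~(\ref{Entropy}) turns this into the stated bound $\frac{1}{\sqrt{n\pi}}\,2^{\lfloor n/6\rfloor H(1/4)+4}$.

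The main obstacles, then, are: (i) confirming that $\omega_{1,\C}$ really is the cyclic predecessor on $B$ --- which requires carefully tracking how the ``back by $6$'' orbit threads through the even and odd positions of the odd-order cyclic group that the class $1\bmod 3$ becomes, the delicate point being the single ``jump'' that occurs at the wrap-around; and (ii) the cycle-trace that pins down when $\gamma^{5}\circ(u,w,b_1)$ is a full $r$-cycle, and hence the correct residue class of $r$ modulo $5$. Both are elementary but must be carried out with care; granting them, the case dichotomy, the counting, and the invocation of the earlier lemmas are routine.
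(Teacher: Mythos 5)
Your proof is correct and follows essentially the same route as the paper's: the same dichotomy on $\gcd(n,6)$, reduction to Proposition~\ref{power2} when $3\nmid n$, and for $3\mid n$ a set $E$ consisting of a large subset of one residue class modulo $6$ together with two elements that are each alone in their class modulo $3$, so that $\omega_{1,\C}$ fixes those two and acts as the cyclic predecessor on the rest, $\omega_{2,\C}\circ\omega_{1,\C}=\gamma^{5}\circ(e_1,e_2,e_3)$, and $r$ is chosen in a good residue class modulo $5$, with the factor $4$ coming from Lemmas~\ref{-R-C} and~\ref{CR}. The paper's choice ($e_1=1$, $e_2=2$, bulk in $3\bmod 6$, $r\equiv 4\pmod 5$) and yours ($\{3,5\}$, bulk in $1\bmod 6$, $r\equiv 2,4\pmod 5$) are equivalent variants, and the two ``obstacles'' you flag do check out exactly as you predict.
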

\begin{proof}
We divide the proof in two cases. If $gcd(n,6)=1$, the thesis follows from Proposition \ref{power2}. In fact, in this case, the number of distinct solutions of $P(A)$ is, at least, of
$$ 4\binom{\lceil n/6\rceil}{ \lceil n/24\rceil}\geq  4\binom{\lfloor n/6\rfloor}{ \lfloor n/24\rfloor}.$$

Otherwise, we have that $gcd(n,6)=3$. In this case we consider a subset $E=(e_1,\dots,e_r)$
where $e_1<e_2<\dots<e_r$ of $[1,n]$ that satisfies the following properties:
\begin{itemize}
\item[(1)] $e_1=1$ and $e_2=2$;
\item[(2)] the elements $e_3,\dots,e_r$ of $E$ are integers equivalent to $3$ modulo $6$;
\item[(3)] $r=|E|$ is equivalent to $4$ modulo $5$.
\end{itemize}
We note that the number of integers equivalent to $3$ modulo $6$ in $[1,n]$ is $\lfloor\frac{n+3}{6}\rfloor\geq \lfloor\frac{n}{6}\rfloor$.
Now, we fix $r\equiv 4 \pmod{5}$ in $[\frac{n}{24}, \frac{n}{12}]$. Then the number of possible choices for a set $E$ of cardinality $r$ among the integers equivalent to $3$ modulo $6$ is, at least, of
$$ \binom{\lfloor n/6\rfloor}{ \lceil n/24\rceil}\geq \binom{\lfloor n/6\rfloor}{ \lfloor n/24\rfloor}\gtrsim \frac{1}{\sqrt{n\pi}}2^{\lfloor\frac{n}{6}\rfloor\cdot H(1/4)+2}.$$

As usual, we denote by $\gamma$ the cyclic permutation of the elements of $E$ defined by $(e_1,\dots,e_r)$. Here we have that $\omega_{2,\C}=\gamma^6$ and that, for $x\not\in \{e_1,e_2,e_3\}$, $\omega_{1,\C}=\gamma^{-1}$. It follows that, if $x\not\in \{e_1,e_2,e_3\}$, $\omega_{2,\C}\circ \omega_{1,\C}=\gamma^{5}$, that is $\omega_{2,\C}\circ \omega_{1,\C}(e_i)=e_{i+5}$ for $i\not \in \{1,2,3\}$ and where the indices are considered modulo $r$.
Due to the definition, we also have that $\omega_{1,\C}(e_1)=e_1$, $\omega_{1,\C}(e_2)=e_2$ and $\omega_{1,\C}(e_3)=e_r$.
It means that $\omega_{2,\C}\circ \omega_{1,\C}(e_1)=\gamma^{6}(e_1)=e_7$, $\omega_{2,\C}\circ \omega_{1,\C}(e_2)=\gamma^{6}(e_2)=e_8$ and $\omega_{2,\C}\circ \omega_{1,\C}(e_3)=e_6$.

Since $r\equiv 4 \pmod{5}$, we have that: $$\gamma^5=(e_1,e_6,e_{6+5},\dots, e_{r-3}, e_2,e_7,\dots, e_{r-2}, e_3,e_{3+5},\dots, e_{r-1}, e_4,\dots, e_r, e_5,\dots, e_{r-4}).$$
It follows that $\omega_{2,\C}\circ \omega_{1,\C}$ is the cycle of order $r$ given by:
$$(e_1,e_7,\dots, e_{r-2}, e_3,e_6,e_{6+5},\dots, e_{r-3}, e_2,e_{8},\dots, e_{r-1}, e_4,\dots, e_r, e_5,\dots, e_{r-4}).$$
Moreover, since $E$ covers all the congruence classes modulo $3$ in $[1,n]$, both the conditions of Lemma \ref{Percorso2} are satisfied and $(\R,\C_E)$ is a solution of $P(A)$. Finally, the thesis follows because, due to Lemmas \ref{-R-C} and \ref{CR}, from each such solution $(\R,\C_E)$ we obtain four different solutions of $P(A)$.
\end{proof}

\begin{thm}\label{DiagBi}
Let $n\geq 3$ and $t$ be such that either $t\in \{1,2\}$ and $n\equiv 1\pmod{4}$ or $t=3$ and $n\equiv 3\pmod{4}$ or $t\in \{n,2n\}$ and $n$ is odd. Then, set $v=6n+t$, the number of non-isomorphic simple $3$-gonal biembeddings of $K_{\frac{6n+t}{t}\times t}$ is, at least, of:
$$\frac{2^{n/2}}{9n^2}\approx\begin{cases} 2^{v/12+o(v)} \mbox{ if }t\in\{1,2,3\};\\
2^{v/14+o(v)} \mbox{ if }t=n;\\
2^{v/16+o(v)} \mbox{ if }t=2n.\end{cases}$$
\end{thm}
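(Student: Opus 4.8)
The plan is to assemble three ingredients already available in the paper: the existence of globally simple cyclically $3$-diagonal relative Heffter arrays (Lemma \ref{ExistenceDiagonalGS}), the exponential lower bound on the number of solutions of the Crazy Knight's Tour Problem for such arrays (Proposition \ref{3diag}), and the translation of those solutions into non-isomorphic biembeddings (Remark \ref{NonIsoSol}, which itself rests on Corollary \ref{differentEmbeddings2} and Proposition \ref{upperboundFamily}). So the argument is essentially a bookkeeping one, and the ``hard work'' is entirely inherited from Proposition \ref{3diag}.

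First I would match the hypotheses. Setting $k=3$ in Lemma \ref{ExistenceDiagonalGS}: item (1) requires $nk\equiv 3\pmod 4$, i.e. $3n\equiv 3\pmod 4$, i.e. $n\equiv 1\pmod 4$, which is exactly the case $t\in\{1,2\}$; the case $t=3$ is item (2) with $n\equiv 3\pmod 4$; and the case $t\in\{n,2n\}$ is item (3), valid for all odd $n$. Hence in each of the three admissible parameter families there exists a globally simple cyclically $3$-diagonal Heffter array $A=\H_t(n;3)$, and in all cases $n$ is odd, so $A$ also satisfies the hypotheses of Proposition \ref{3diag}.

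Next I would apply Proposition \ref{3diag} to obtain that $P(A)$ has at least $M=2^{n/2+2}$ distinct solutions. Since $A$ is globally simple, cyclically $3$-diagonal and $k=3$ is odd, Remark \ref{NonIsoSol} applies directly: from these $M$ solutions one gets at least $2M$ distinct $\Z_v$-regular simple $3$-gonal biembeddings of $K_{\frac{v}{t}\times t}$ with $v=6n+t$, and, via Proposition \ref{upperboundFamily}, at least
$$\frac{M}{(2nk)^2}=\frac{2^{n/2+2}}{36n^2}=\frac{2^{n/2}}{9n^2}$$
non-isomorphic ones, which is the claimed bound.

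Finally I would read off the asymptotics by substituting $v=6n+t$ and using $9n^2=2^{o(v)}$: when $t\in\{1,2,3\}$ we have $n=\frac{v}{6}+o(v)$, hence $\frac{2^{n/2}}{9n^2}=2^{v/12+o(v)}$; when $t=n$ we have $v=7n$, hence $2^{v/14+o(v)}$; and when $t=2n$ we have $v=8n$, hence $2^{v/16+o(v)}$. The only points needing care are that the congruence conditions in the statement line up precisely with those of Lemma \ref{ExistenceDiagonalGS} (done above) and that global simplicity is preserved all the way through, so that the resulting biembeddings really are simple; there is no genuine obstacle beyond this, as the exponential growth comes for free from Proposition \ref{3diag}.
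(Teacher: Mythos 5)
Your proposal is correct and follows essentially the same route as the paper's proof: existence of the array from Lemma \ref{ExistenceDiagonalGS}, the count of $2^{n/2+2}$ solutions of $P(A)$ from Proposition \ref{3diag}, and the passage to non-isomorphic simple biembeddings via Remark \ref{NonIsoSol}. Your hypothesis-matching (in particular $3n\equiv 3\pmod 4\iff n\equiv 1\pmod 4$) and the arithmetic $\frac{2^{n/2+2}}{(6n)^2}=\frac{2^{n/2}}{9n^2}$ are exactly what the paper leaves implicit.
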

\proof
For these sets of parameters $n$ and $t$, Lemma \ref{ExistenceDiagonalGS} assures the existence of a cyclically $k$-diagonal $\H_t(n;3)$, say $A$. Then, due to Proposition \ref{3diag}, the problem $P(A)$ admits at least $2^{n/2+2}$ solutions. The thesis follows from Remark \ref{NonIsoSol}.
\endproof

\begin{rem}\label{confronto}
If $t=1$, namely if we are considering the complete graph $K_{6n+1}$, the lower bound of Theorem \ref{DiagBi} is surely worse than the ones already obtained in the literature (see \cite{Bonnington, GG08,GGS,GK10A,GK10B}). On the other hand, we want to underline that our result is still exponential in $v$ and holds also for other values of $t$.
\end{rem}
\begin{thm}\label{DiagBi2}
Let $k\in\{5,7,9\}$, let $n\geq 120$ and $t$ be such that either $t\in \{1,2\}$ and $nk\equiv 3 \pmod{4}$ or $t=k$ and $n\equiv 3 \pmod{4}$. Then, set $v=2nk+t$, the number of non-isomorphic simple $k$-gonal biembeddings of $K_{\frac{2nk+t}{t}\times t}$ is, at least, of:
$$ \frac{\binom{\lfloor n/(k-1)\rfloor}{ \lfloor n/(4k-4)\rfloor}}{(nk)^2}\gtrsim \frac{\sqrt{\frac{ 2(k-1)}{3n\pi}}2^{\lfloor\frac{n}{k-1}\rfloor\cdot H(1/4)+1}}{(nk)^2}\approx 2^{v\cdot \frac{H(1/4)}{2k(k-1)}+o(v,k)}.$$
\end{thm}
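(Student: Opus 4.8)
The plan is to run exactly the scheme behind Theorem~\ref{DiagBi}, but fed with the stronger solution counts of Propositions~\ref{power2} and~\ref{k7} instead of Proposition~\ref{3diag}. First I would produce the array: for any admissible pair $(n,t)$ in the statement, $k\in\{5,7,9\}$ together with the stated congruences is precisely case (1) or case (2) of Lemma~\ref{ExistenceDiagonalGS}, so a globally simple, cyclically $k$-diagonal Heffter array $A=\H_t(n;k)$ exists; fix one. Next I would observe that every hypothesis forces $n$ to be odd: if $t\in\{1,2\}$ then $nk\equiv 3\pmod 4$ with $k$ odd forces $n$ odd, and if $t=k$ then $n\equiv 3\pmod 4$ is already odd. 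Hence for $k=5$ one has $\gcd(n,k-1)=\gcd(n,4)=1$ and for $k=9$ one has $\gcd(n,8)=1$.

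With this in hand I would bound the number $M$ of distinct solutions of $P(A)$. For $k\in\{5,9\}$ the array $A$ satisfies the hypotheses of Proposition~\ref{power2} — the bound $n\ge 120$ covers the requirement $n\ge 4k-3$, and $\gcd(n,k-1)=1$ was just checked — giving $M\ge 4\binom{\lceil n/(k-1)\rceil}{\lceil n/(4k-4)\rceil}$. For $k=7$ one may instead have $\gcd(n,6)=3$, so there I would invoke Proposition~\ref{k7} (valid since $n>120$), which treats both subcases uniformly and yields $M\ge 4\binom{\lfloor n/6\rfloor}{\lfloor n/24\rfloor}$.

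From here the argument is exactly Remark~\ref{NonIsoSol}: since $k$ is odd and $A$ is globally simple, $A$ and its transpose $A^t$ have the same skeleton and entries and agree on the diagonal $D_1$, so Corollary~\ref{differentEmbeddings2} turns the $M$ solutions of $P(A)$, read on $A$ and on $A^t$, into at least $2M$ distinct $\Z_v$-regular simple $k$-gonal biembeddings of $K_{\frac{v}{t}\times t}$; since these are Archdeacon embeddings all translations are orientation preserving, so Proposition~\ref{upperboundFamily} bounds each isomorphism class by $2(2nk)^2$, leaving at least $M/(2nk)^2$ non-isomorphic ones. Substituting the bound on $M$ and absorbing the floor/ceiling discrepancies yields at least $\binom{\lfloor n/(k-1)\rfloor}{\lfloor n/(4k-4)\rfloor}/(nk)^2$; then, since $n/(4k-4)=\tfrac14\cdot n/(k-1)$, the estimate~\eqref{Entropy} with $p=1/4$ evaluates this to $2^{\frac{n}{k-1}H(1/4)+o(n)}$, which becomes $2^{v\cdot\frac{H(1/4)}{2k(k-1)}+o(v,k)}$ after writing $n=(v-t)/(2k)$.

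The step requiring the most care is the arithmetic that makes Proposition~\ref{power2} applicable: its hypothesis $\gcd(n,k-1)=1$ fails exactly when $k=7$ and $3\mid n$, which is precisely why Proposition~\ref{k7} — with its more delicate description of the cycle $\omega_{2,\C}\circ\omega_{1,\C}$ for $\gcd(n,6)=3$ — must be invoked separately. Apart from this, the argument is a routine assembly of results already established in the excerpt, mirroring the proof of Theorem~\ref{DiagBi}.
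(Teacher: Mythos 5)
Your proposal is correct and follows exactly the paper's route: existence via Lemma \ref{ExistenceDiagonalGS}, solution counts from Proposition \ref{power2} (for $k\in\{5,9\}$, where $n$ odd gives $\gcd(n,k-1)=1$) and Proposition \ref{k7} (for $k=7$), then Remark \ref{NonIsoSol}. You in fact supply the parity/gcd checks that the paper leaves implicit in its one-line proof.
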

\proof
We proceed as in the proof of Theorem \ref{DiagBi} by using Propositions \ref{power2} and \ref{k7} instead of Proposition \ref{3diag}.
\endproof
\begin{thm}\label{DiagBi3}
Let $k>9$ be odd, let $n\geq 4k-3$ and $t$ be such that either $t\in \{1,2\}$ and $nk\equiv 3 \pmod{4}$ or $t=k$ and $n\equiv 3 \pmod{4}$. Assume also that $\gcd(n,k-1)=1$. Then, set $v=2nk+t$, the number of non-isomorphic, non necessarily simple, $k$-gonal biembeddings of $K_{\frac{2nk+t}{t}\times t}$ is, at least, of:
$$ \frac{\binom{\lceil n/(k-1)\rceil}{ \lceil n/(4k-4)\rceil}}{(nk)^2}\gtrsim \frac{\sqrt{\frac{ 2(k-1)}{3n\pi}}2^{\frac{n}{k-1}\cdot H(1/4)+1}}{(nk)^2}\approx 2^{v\cdot \frac{H(1/4)}{2k(k-1)}+o(v,k)}.$$
\end{thm}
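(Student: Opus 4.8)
The plan is to establish Theorem \ref{DiagBi3} by the very same three-step scheme already used for Theorems \ref{DiagBi} and \ref{DiagBi2}: first produce one cyclically $k$-diagonal Heffter array with the prescribed parameters, then exhibit exponentially many solutions of its Crazy Knight's Tour Problem via Proposition \ref{power2}, and finally convert these solutions into pairwise non-isomorphic biembeddings by means of Remark \ref{NonIsoSol}. The only point of departure from the $k\in\{5,7,9\}$ case is that for $k>9$ the globally simple arrays furnished by Lemma \ref{ExistenceDiagonalGS} are no longer at our disposal, so we must work with the (not necessarily globally simple) arrays of Lemma \ref{ExistenceDiagonal}; accordingly the biembeddings we build are guaranteed to be $k$-gonal but not, in general, simple, which is exactly why the statement reads ``non necessarily simple''.

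In more detail, I would first verify that the congruence hypotheses on $(n,t,k)$ put us in one of the admissible cases of Lemma \ref{ExistenceDiagonal}. Since $k$ is odd, if $t\in\{1,2\}$ then $k\equiv1\pmod4$ together with $nk\equiv3\pmod4$ forces $n\equiv3\pmod4$, i.e.\ case (2), while $k\equiv3\pmod4$ together with $nk\equiv3\pmod4$ forces $n\equiv1\pmod4$, i.e.\ case (3); if $t=k$ then $n\equiv3\pmod4$ lies in case (4) when $k\equiv1\pmod4$ and in case (5) when $k\equiv3\pmod4$. Hence there is a cyclically $k$-diagonal Heffter array $A=\H_t(n;k)$. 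Since moreover $n\geq4k-3$, both $n$ and $k$ are odd, and $\gcd(n,k-1)=1$, Proposition \ref{power2} applies to $A$ and produces at least
$$M:=4\binom{\lceil n/(k-1)\rceil}{\lceil n/(4k-4)\rceil}$$
distinct solutions of $P(A)$. Then I would invoke Remark \ref{NonIsoSol}: as $A$ is cyclically $k$-diagonal with $k$ odd we have $skel(A)=skel(A^t)$, the arrays $A$ and $A^t$ agree on $D_1$ and share the same entries, so Corollary \ref{differentEmbeddings2}, Lemmas \ref{-R-C} and \ref{CR}, and Proposition \ref{upperboundFamily} (using that in the Archdeacon embedding every translation $\tau_g$ is orientation preserving) together yield at least $\frac{M}{(2nk)^2}=\frac{1}{(nk)^2}\binom{\lceil n/(k-1)\rceil}{\lceil n/(4k-4)\rceil}$ non-isomorphic $\mathbb{Z}_v$-regular $k$-gonal biembeddings of $K_{\frac{2nk+t}{t}\times t}$, where $v=2nk+t$.

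The asymptotic estimates in the statement then follow by substituting $v=2nk+t$ and applying the binomial approximation (\ref{Entropy}) to $\binom{\lceil n/(k-1)\rceil}{\lceil n/(4k-4)\rceil}$, just as in the closing lines of the proof of Proposition \ref{power2}. I do not expect a serious obstacle here: the argument is essentially a transcription of the proof of Theorem \ref{DiagBi2}, with Lemma \ref{ExistenceDiagonal} playing the role of Lemma \ref{ExistenceDiagonalGS} and with the word ``simple'' removed from the conclusion. The two places that deserve a little care are the bookkeeping just performed, showing that cases (2)--(5) of Lemma \ref{ExistenceDiagonal} genuinely exhaust all admissible triples $(n,t,k)$, and---if one wants to be fully rigorous---a short remark that the chain Corollary \ref{differentEmbeddings2} $\Rightarrow$ Remark \ref{NonIsoSol} remains valid when the underlying Heffter array is only cyclically $k$-diagonal rather than also globally simple, which holds precisely because $\lambda=1$ and distinct solutions of $P(A)$ still induce distinct rotations $\rho$.
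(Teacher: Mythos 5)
Your proof is correct and follows exactly the route the paper takes: the paper's own proof is the one-line instruction to repeat the argument of Theorem \ref{DiagBi} with Lemma \ref{ExistenceDiagonal} in place of Lemma \ref{ExistenceDiagonalGS} and Proposition \ref{power2} in place of Proposition \ref{3diag}, and then conclude via Remark \ref{NonIsoSol}. Your additional bookkeeping verifying that the congruence hypotheses land in cases (2)--(5) of Lemma \ref{ExistenceDiagonal} is a welcome elaboration of a step the paper leaves implicit, but it is not a different method.
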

\proof
We proceed as in the proof of Theorem \ref{DiagBi} by using Lemma \ref{ExistenceDiagonal} instead of Lemma \ref{ExistenceDiagonalGS} and Proposition \ref{power2} instead of Proposition \ref{3diag}.
\endproof

\section*{Acknowledgements}
The authors were partially supported by INdAM--GNSAGA.

\end{document}